\documentclass{amsart}

\usepackage[utf8]{inputenc}
\usepackage{graphicx,xcolor}
\usepackage{subcaption}
\usepackage{mathrsfs,amsmath,amssymb,amsthm,amsfonts,mathtools,thmtools,dsfont}
\usepackage{hyperref,cleveref}
\usepackage{enumerate,subcaption,booktabs}

\usepackage{tikz}
\usetikzlibrary{calc,angles,quotes,patterns}
\usetikzlibrary{patterns.meta}
\usetikzlibrary{decorations.pathreplacing,ext.transformations.mirror}
\usepackage{pgfplots}
\pgfplotsset{compat=1.18}
\usepgfplotslibrary{external}

\usepackage[top=3cm,bottom=3cm,left=3cm,right=3cm,marginparwidth=2.5cm]{geometry}
\usepackage[activate={true,nocompatibility},final,tracking=true,kerning=true,spacing=true,factor=1100,stretch=10,shrink=10]{microtype}
\microtypecontext{spacing=nonfrench}

\newtheorem{theorem}{Theorem}
\declaretheorem[sibling=theorem,name=Lemma]{lemma}
\declaretheorem[sibling=theorem,name=Corollary]{corollary}
\declaretheorem[sibling=theorem,name=Proposition]{proposition}
\theoremstyle{definition}

\theoremstyle{remark}
\newtheorem*{remark}{Remark}

\newcommand{\C}{\mathbb C}
\newcommand{\R}{\mathbb R}
\newcommand{\N}{\mathbb N}
\newcommand{\Z}{\mathbb Z}
\newcommand{\abs}[1]{\left\lvert #1 \right\rvert}

\renewcommand{\d}{\mathrm d}
\renewcommand{\epsilon}{\varepsilon}
\newcommand{\rightplane}{\mathbb{H}}
\newcommand{\rayon}{\rho_a(x)}
\renewcommand{\L}[1]{\mathcal L_{#1}(a;x)}
\newcommand{\fgd}{\phi(a;x)}

\DeclareMathOperator{\arcsinh}{arcsinh}

\DeclareMathOperator{\sgn}{sgn}

\usepackage{biblatex}
\bibliography{references}
\usepackage[acronym]{glossaries-extra}
\setabbreviationstyle[acronym]{long-short}
\newacronym{erw}{ERW}{Elephant Random Walk}
\newacronym{ldp}{LDP}{Large Deviations Principle}
\newacronym{mgf}{MGF}{Moment Generating Function}
\newacronym{lt}{Laplace transform}{Laplace transform}
\glsdisablehyper

\title[Laplace transform of the Elephant Random Walk]
{Asymptotics for the Laplace transform of the Elephant Random Walk via Schwarz-Christoffel mappings}
\date{July 16, 2026}
\author{ Juan Esaul González Rangel}

\begin{document}
\begin{abstract}
     We derive the long time asymptotic behavior of the Laplace transform of the Elephant Random Walk (ERW). The ERW is a generalization of the Simple Random Walk in which the past influences the future evolution of the process according to some memory parameter. Its transition kernel is neither space nor time homogeneous and its transition probability at the $n$th step can be written as a function of $n$ and the position of the walk at time $n-1$. A recurrence relation for its Laplace transform allows one to find a generating function governed by a transport-type PDE whose solution can be written in terms of a Schwarz–Christoffel mapping, allowing a precise analytic control. This enables to apply singularity analysis methods to estimate the Laplace transforms. 
\end{abstract}
\maketitle
\section{Introduction and main results}

 The \gls{erw} was introduced by Schütz and Trimper \cite{elephants_never_forget} as a generalization of the Simple Random Walk where the increments depend on the full past of the process. Their goal was to study how memory effects can lead to anomalous diffusion. Despite its simple definition, its rich properties have motivated a broad research literature around, among others, its path-wise convergence \cite{baur2016elephant}, limit distribution \cite{guerin2025limit_superdiffusive}, generalizations and connection to other models \cite{podder2025,multidimensional_elephant}. 

To define an \gls{erw}, take $p,q\in[0,1]$ and let $X_1$ be a Rademacher random variable taking the value 1 with probability $q$ and $-1$ with probability $1-q$. For $n>1$, let $U(n)$ be a uniform random variable in $\{1,2,\ldots,n-1\}$. Then, $X_n$ is defined as $X_{U(n)}$ with probability $p$ and as $-X_{U(n)}$ with probability $1-p$. The \gls{erw} at time $n$ is given by $S_n = \sum_{j=1}^n X_j$. 

The value $q$ is called the first-step parameter and $p$ is called the memory parameter, it is interpreted as the willingness of the elephant to repeat its past. It will be convenient to write $p$ as $p=\frac{1+a}{2}$, with $a\in[-1,1]$. 

Let $\mathscr F = (\mathscr F_n, n\in \N)$ be the natural filtration of $S$. Conditioning the $(n+1)$th increment on the filtration at time $n$ yields
\begin{align}
    \mathbb P( X_{n+1} = 1 | \mathscr F_n) &= \frac{\# \{ X_j, j \le n : X_j =1 \}}{n} p + \frac{ \# \{ X_j, j \le n : X_j = -1\} }{n}(1-p) \nonumber\\
    &= \frac{n+S_n}{2n}p + \frac{n-S_n}{2n}(1-p) = \frac12\left( 1 + a\frac{S_n}{n} \right). \label{eq:recursion_probability}
\end{align}
Two special cases arise as particular instances of \eqref{eq:recursion_probability}. When $a = 0$, the past steps do not influence the behavior of the walk and then the \gls{erw} coincides with the Simple Random Walk, a process whose $n$th step independently takes the value $1$ or $-1$ with probability $1/2$. When $a=1$, the \gls{erw} repeats the sampled step at every time $n$, so every step coincides with $X_1$ and then it exhibits a ballistic behavior depending only on the first-step parameter $q$. 

As mentioned above, since its introduction, the \gls{erw} has been extensively studied. It was conceived as a model exhibiting a diffusive--superdiffusive phase transition induced solely by long-range memory. Specifically, when $a<1/2$, the variance of the walk grows linearly with time $n$, whereas for $a>1/2$, it grows at the rate $n^{2a}$. In the critical regime, $a=1/2$, the variance grows at the rate $n\log n$. This phase transition was first observed in the seminal paper by Schütz and Trimper \cite{elephants_never_forget}. Its connections to Polya type urns were used to establish the convergence to a Gaussian process in the diffusive case and non-Gaussian in the superdiffusive case \cite{baur2016elephant}. Details of its asymptotic behavior were obtained by means of a martingale approach \cite{bercu2018martingale}, while some other approaches were used to explore the properties of its limit distribution \cite{guerin2023fixed_point,guerin2025limit_superdiffusive}. Some generalizations exist, modifying its memory mechanism \cite{podder2025} or its state space \cite{multidimensional_elephant}. 
Particularly, the \gls{erw} is known to satisfy a \gls{ldp} with a rate function depending on $a$ \cite{franchini2023large,franchini2015large}. 

The main contribution of this paper is the determination of the precise long-time asymptotic behavior of the Laplace transform of the \gls{erw}. The key observation is that the generating function of Laplace transforms can be represented through the inverse of a Schwarz--Christoffel integral. This representation provides a precise description of its analytic continuation and singularities, allowing the asymptotic behavior of the Laplace transform to be derived by singularity analysis.

Knowing the behavior of the Laplace transform is interesting in its own right and may also be useful for deriving other results. In particular, in \Cref{last_subsection} we show how our result can be used to recover the known \gls{ldp} for the \gls{erw}. Moreover, it is reasonable to expect that it may provide a starting point for obtaining further results that are, to the best of the author's knowledge, still open, such as sharp large deviation estimates in the spirit of Bahadur and Ranga Rao in a \cite{bahadur1960deviations} in a similar fashion as the results for the number of descents of a random permutation by Bercu, Bonnefont et Richou \cite{bercu2024sharp} or a local limit theorem.

For the following, let $\L{n}$ denote the Laplace transform of $S_n$ (equivalently, the moment generating function at negative arguments): $$\mathcal L_n(q,a;x) \coloneqq \mathbb E[ e^{-xS_n} ].$$ The dependence of $\mathcal L_n$ on $q$ will not be written unless necessary.

An interesting property is that if an \gls{erw} $S$ has first step parameter $q$ and memory parameter $p$, then $Z=(-S_n, n \ge 0)$ is an \gls{erw} with the same memory parameter and first step parameter $1-q$. Indeed, it is immediate that $\mathbb P\left( Z_1 = 1 \right) = \mathbb P \left( S_1 = -1 \right) = 1-q$ and by \cref{eq:recursion_probability}:
\begin{align*}
    \mathbb P\left( Z_{n+1} = Z_n + 1 \mid \mathscr F_n \right) &= \mathbb P \left( S_{n+1} = S_n - 1 \mid \mathscr F_n \right) = \frac12\left( 1 - a\frac{S_n}{n} \right) = \frac12\left( 1 + a\frac{Z_n}{n} \right).
\end{align*}
This implies that 
\begin{equation}
    \mathcal L_n(q,a;-x) = \mathcal L_n(1-q,a;x). \label{eq:even}
\end{equation}
The identity above allows one to study $\L{n}$ only for positive values of $x$.

In the cases $a=0$ (the Simple Random Walk) and $a=1$, it is immediate to verify \cref{eq:even} by finding the Laplace transform of $S_n$ exactly:
\begin{align}
    \mathcal L_n(q,0;x) &= \left( qe^{-x} + (1-q)e^{x} \right)\cosh^{n-1}(x), \qquad \mathcal L_n(q,1;x) = q e^{-nx} + (1-q)e^{nx}. \label{eq:zero_and_one}
\end{align}

The main result of this paper is stated in \Cref{result}, which provides an asymptotic expression for $\L{n}$ as $n \to \infty$. It is obtained through a detailed analysis of the generating function $$ L_a(x,z) \coloneqq \sum_{n\ge 0} \L{n+1} z^n,$$ yielding an explicit expression and a nontrivial analytic continuation to a sufficiently large complex domain. This analysis contains most of the technical work of the paper and enables the application of singularity analysis techniques.

Two results stemming from this analysis are of independent interest. \Cref{form_solution} provides an explicit expression for $L_a(x,z)$ involving a Schwarz--Christoffel mapping, while \Cref{analicity} identifies a complex domain in which the map $z \mapsto L_a(x,z)$ is analytic for fixed values of $x$ and $a$. Together, these results establish the analytic framework required to derive the asymptotic behavior of \Cref{result}. They may also be useful for further applications.

Finally, \Cref{non_analytic}, which characterizes the failure of analyticity at the origin of the function governing the exponential growth rate of the Laplace transform, constitutes the fourth main result of the paper. It reveals distinct behaviors for positive and negative values of $a$, as well as for the regimes $a<1/2$ and $a\ge1/2$. Moreover, additional singular regimes occur for values of $a$ of the form $a=1/n$, where $n$ is any positive integer.

Before stating the main result, it is convenient to define the function that determines the exponential growth rate of the Laplace transform. For every $a\in [-1,1]\setminus \{0\}$ fixed, define $\phi(a;\cdot): \R \to \R_+$ as 
\begin{equation} \displaystyle
    \fgd \coloneqq \left\{ \begin{array}{cc}
        \frac{\sinh^{-\frac1a}(\abs{x})}{-\frac{1}{a}\int_0^{\abs{x}} \sinh^{-1-\frac{1}{a}} (s) \d s}, & a \in [-1,0), \\
        \frac{\sinh^{-\frac1a}(\abs{x})}{ \frac1a\int_{\abs{x}}^\infty \sinh^{-1-\frac1a} (s)\d s }, & a \in (0,1].
    \end{array} \right. \label{eq:definition_fgd}
    \end{equation} 

\begin{theorem} \label{result}
    The following estimate holds as $n$ goes to $\infty$:
    \begin{align} \displaystyle
        \L{n} = \left\{ \begin{array}{cc}
        \left( \fgd \right)^n(1 + o(1)), &\text{if $a\in [-1,0)$,} \\
        2\left( \frac{1-q}{e^{-2x}+1} + \frac{q}{e^{2x} +1} \right)\left(\cosh(x)\right)^n(1 + o(1)),  &\text{ if } a = 0,\\
        2\left( \frac{1-q}{a+1}\mathds 1_{x>0} + \frac{q}{a+1} \mathds 1_{x<0} \right)\left( \fgd \right)^n(1 + o(1)),  &\text{if $a\in(0,1]$.}
        \end{array} \right.  \label{eq:asymptotic_laplace} 
    \end{align}
\end{theorem}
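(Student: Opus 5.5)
The plan is to derive a recurrence in $n$ for $\L{n}$, encode it as a first-order PDE for the generating function $L_a(x,z)$, solve that PDE by characteristics, and then read off the asymptotics by singularity analysis in $z$. By \eqref{eq:even} it is enough to treat $x>0$, and the case $a=0$ follows directly from \eqref{eq:zero_and_one}, since $(qe^{-x}+(1-q)e^x)/\cosh x = 2\bigl(\frac{q}{e^{2x}+1}+\frac{1-q}{e^{-2x}+1}\bigr)$. So the real work is for $a\neq 0$.

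\emph{Recurrence and PDE.} Conditioning on $\mathscr F_n$ and using \eqref{eq:recursion_probability} gives
\[
\mathbb E\bigl[e^{-xS_{n+1}}\mid\mathscr F_n\bigr]=e^{-xS_n}\Bigl(\cosh x-a\tfrac{S_n}{n}\sinh x\Bigr).
\]
Since $\partial_x\L{n}=-\mathbb E[S_ne^{-xS_n}]$, this yields
\[
n\L{n+1}=n\cosh(x)\L{n}+a\sinh(x)\,\partial_x\L{n}.
\]
Multiplying by $z^{n-1}$ and summing turns this into a linear transport equation of the form
\[
(1-z\cosh x)\,\partial_z L_a - a\sinh(x)\,\partial_x L_a=\cosh(x)\,L_a,
\]
up to the precise bookkeeping of indices. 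The initial datum is $L_a(x,0)=\mathcal L_1=qe^{-x}+(1-q)e^x$.

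\emph{Solving by characteristics.} The characteristic curves satisfy $\d z/\d x = -(1-z\cosh x)/(a\sinh x)$. This is linear in $z$ with integrating factor $\sinh^{-1/a}(x)$, so the invariant is
\[
z\,\sinh^{-1/a}(x)-\tfrac1a\int^{x}\sinh^{-1-1/a}(s)\,\d s .
\]
The lower limit is $0$ when $a<0$ and $\infty$ when $a>0$, which is exactly the choice that makes the integrals in \eqref{eq:definition_fgd} converge. Inverting this invariant as a function of $x$ is the Schwarz--Christoffel inverse mentioned in \Cref{form_solution}. Along the characteristics the solution is $L_a=\mathcal L_1(\xi)\times(\text{explicit Jacobian factor})$, where $\xi=\xi(x,z)$ is the foot of the characteristic. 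I will take this explicit formula from \Cref{form_solution}.

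\emph{Singularity analysis.} For fixed real $x>0$, the characteristic through $(x,z)$ reaches the boundary of the region where the inverse map is defined precisely when $z=\rho:=1/\fgd$. At that point the invariant above vanishes, and the foot $\xi$ runs off to $\pm\infty$: it reaches $+\infty$, where $e^{\xi}$ dominates, when $a>0$, and it tends to the endpoint $0$ when $a<0$. I would expand $\xi(x,z)$ near $z=\rho$ and show that $L_a(x,z)\sim C/(1-z/\rho)$, a simple pole. Then \Cref{analicity} supplies a $\Delta$-domain of analyticity beyond $|z|=\rho$ with no other singularity on the circle, and the transfer theorem gives $\L{n}\sim C\rho^{-n}$.

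\emph{Computing $C$ (main obstacle).} The hard step is the local expansion near $z=\rho$ that produces the constant. For $a>0$, as $\xi\to\infty$ we have $\sinh^{-1-1/a}(\xi)\approx 2^{1+1/a}e^{-(1+1/a)\xi}$. The factor $(1-q)e^{\xi}$ then combines with the linear vanishing of the invariant to give the residue $2(1-q)/(a+1)$, while the $qe^{-\xi}$ term is negligible; this is where the indicator $\mathds 1_{x>0}$ comes from. For $a<0$ the foot tends to $0$, where both exponentials equal $1$, so $\mathcal L_1(\xi)\to q+(1-q)=1$ and the constant should come out as $1$. I expect the delicate points to be the uniformity of these expansions in the exponent $1/a$ (for instance near $a=1/k$, where the local behaviour of $\sinh^{-1-1/a}$ changes type) and checking that the endpoint behaviour truly produces a pole rather than an algebraic or logarithmic singularity. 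I would first verify the whole computation against the exact formula for $a=1$ in \eqref{eq:zero_and_one}: there $\phi=e^{x}$ and $C=1-q=2(1-q)/(1+1)$, as required.
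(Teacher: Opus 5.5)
Your proposal is correct and follows the paper's route. It uses the same recurrence and transport PDE, the characteristic solution of \Cref{form_solution} (your $\mathcal L_1(\xi)\sinh^{1/a}(\xi)/\sinh^{1/a}(x)$ is exactly $\sinh^{-1/a}(x)A(k^{-1}(\cdot))$ with $\sinh\xi=k^{-1}(\cdot)$), the $\Delta$-domain from \Cref{analicity}, and the same local computation as $\xi\to0$ ($a<0$) or $\xi\to\infty$ ($a>0$), giving the constants $1$ and $2(1-q)/(a+1)$ before \Cref{flajolet} is applied. Two small fixes: the invariant is $z\sinh^{-1/a}(x)+\frac1a\int^x\sinh^{-1-1/a}(s)\,\d s$ (with your minus sign it vanishes at $z=-\rho$, not $\rho$), and the singularity at $\rho$ is in general a pole plus weaker branch terms (for $a>0$, e.g.\ of order $(1-z/\rho)^{-(1-a)/(1+a)}$) rather than a genuine pole, which is harmless because the transfer theorem only needs $L_a\sim C/(1-z/\rho)$ inside the $\Delta$-domain.
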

Notice that the exponential term depends on $a$ but not on $q$. Furthermore the dependence on $q$ appears only for $a \in [0,1]$. In the extreme case $a=-1$, $S_2=0$ with probability 1, and the first step parameter no longer influences the behavior of the \gls{erw} with memory parameter $0$ after the time $n=2$. Also, for every value of $a$, the dependence on $x$ is only through $\abs{x}$, in concordance with \cref{eq:even}.

An \gls{ldp}, including an expression of the limit cumulant generating function for a large class of models containing one which is equivalent to the \gls{erw} is found in \cite{franchini2023large}. However, to the best of the author’s knowledge, a precise asymptotic expression for the Laplace transform has not explicitly appeared in the literature. This \gls{ldp} can also be found as a corollary to \Cref{result}.

\begin{corollary} \label{large_deviations} For $a\in[-1,1)$, the \gls{erw} satisfies an \gls{ldp} with good rate function $\Lambda^*(x)$ defined as $$ \Lambda^*(x) \coloneqq \sup_{t}\left\{ tx - \log \phi(a;t) \right\},$$ with $\phi(a;t)$ as defined in \cref{eq:definition_fgd}. 
\end{corollary}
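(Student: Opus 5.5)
The plan is to apply the Gärtner--Ellis theorem to $S_n/n$. Take Theorem \ref{result} as given. For every real $t$ it yields the limiting cumulant generating function
$$\Lambda(t)\coloneqq \lim_{n\to\infty}\frac1n\log \mathbb E\big[e^{tS_n}\big]=\lim_{n\to\infty}\frac1n \log \mathcal L_n(a;-t)=\log\phi(a;t),$$
because the prefactors in \eqref{eq:asymptotic_laplace} are positive, finite constants independent of $n$.

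Two points need care. First, the prefactor $2(1-q)/(a+1)$ for $a>0$ vanishes when $q=1$ and $x>0$, and similarly for $q=0$ with $x<0$. In that degenerate case I would use \eqref{eq:even} together with a direct argument: when $q=1$ and $a<1$ the walk still takes negative steps with positive probability, and the exponential rate does not depend on the first step. Alternatively, condition on $X_1$ and compare with the walk started from $q=1/2$ using the Markov structure \eqref{eq:recursion_probability}. At $t=0$ we have $\mathcal L_n=1$, so $\Lambda(0)=0$. This is consistent with $\phi(a;x)\to1$ as $x\to0$, which I would check from \eqref{eq:definition_fgd} (for example by l'Hôpital near $0$). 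The case $a=0$ is explicit: $\Lambda=\log\cosh$.

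Second, I would verify the hypotheses of Gärtner--Ellis. The function $\Lambda$ is finite on all of $\R$, so the origin lies in the interior of its domain. I would then show that $\Lambda$ is differentiable on $\R$. Away from $0$ this is clear from \eqref{eq:definition_fgd}, because the integrals are smooth in $|x|$ for $x\neq0$. At $0$, evenness gives $\Lambda'(0^\pm)=\pm\lim_{x\downarrow0}\Lambda'(x)$, so it suffices to show $\Lambda'(x)\to0$ as $x\downarrow 0$.

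This step is the main obstacle, since the paper flags a failure of analyticity at the origin (\Cref{non_analytic}). Only $C^1$ is needed, however. I would expand near $0$: for $a>0$, write $\int_x^\infty \sinh^{-1-1/a}=\int_x^1 s^{-1-1/a}(1+O(s^2))\,\d s+C$. This gives $\phi(a;x)=1+O(x^{\min(2,1/a)})$ up to possible logarithmic factors at $a=1/2$ or $a=1/k$. Since $1/a>1$ exactly when $a<1$, the derivative tends to $0$. This is where the hypothesis $a<1$ enters, because for $a=1$ one gets $|x|$ and a kink. For $a<0$, write $\int_0^x \sinh^{-1-1/a}=\int_0^x s^{|1/a|-1}(1+O(s^2))$, which gives $\phi=1+O(x^2)$. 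Convexity of $\Lambda$ is automatic, as a limit of convex functions, so essential smoothness reduces to differentiability everywhere. Gärtner--Ellis then gives the LDP with good rate function $\Lambda^*(x)=\sup_t\{tx-\log\phi(a;t)\}$.
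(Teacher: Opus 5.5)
Your route is the paper's: read off $\Lambda(t)=\log\phi(a;t)$ from \Cref{result} and apply Gärtner--Ellis. Your check of the hypotheses is the right one, and it is sharper than the paper's. Since $\Lambda$ is finite on $\R$, essential smoothness reduces to differentiability, and the only delicate point is $t=0$. The paper never examines that point. It asserts instead that $\abs{\Lambda'(t)}\to\infty$ as $t\to\infty$, which is false, because $\abs{S_n}\le n$ forces $\abs{\Lambda'}\le 1$, and which is not needed anyway. Your bound $\phi(a;x)=1+O(\abs{x}^{\min(2,1/a)})$ (with a logarithm when $1/(2a)\in\Z$) gives $\Lambda'(0)=0$ directly from the difference quotient, and that is all you need. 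Do this by hand as you propose rather than citing \Cref{non_analytic}. Its class $C^{\lfloor 1/a\rfloor-1}$ would read $C^0$ for $a\in(1/2,1)$, whereas the leading singular term $\abs{x}^{1/a}$ with $1/a\in(1,2)$ actually gives $C^1$.

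The step that is not proved is the degenerate case $a\in(0,1)$, $q=1$, $x>0$, equivalently $q=0$, $x<0$. Note that \eqref{eq:even} only exchanges these two cases, so it does not help. The paper's proof also overlooks this case when it divides by $C(a,q)$.

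\begin{itemize}
\item Because $\mathcal L_n$ is affine in $q$, conditioning on $X_1$ gives $\mathcal L_n(1,a;x)\le 2\mathcal L_n(\tfrac12,a;x)$. This yields the upper bound $\limsup_n \frac1n\log\mathcal L_n(1,a;x)\le\log\phi(a;x)$.
\item Nothing you wrote gives the matching lower bound. \Cref{result}, whose prefactor vanishes here, yields only $\mathcal L_n(1,a;x)=o(\phi(a;x)^n)$.
\item The claim that the rate does not depend on the first step is true but not elementary. For $a>0$ the first step is never forgotten: under the monotone coupling, two walks differing at time $1$ separate by order $n^a$, and the polynomial prefactor changes. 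So ``the walk still takes negative steps with positive probability'' is not an argument.
\end{itemize}

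The clean fix uses the paper's own machinery. With $q=1$ in \Cref{form_solution}, $A(t)=(\sqrt{1+t^2}-t)\,t^{1/a}=\tfrac12 t^{1/a-1}(1+O(t^{-2}))$ as $t\to+\infty$. Combined with $k^{-1}(w)\sim((a+1)w)^{-a/(a+1)}$ as $w\to0$, this shows that $L_a(x,z)$ is asymptotic to a positive multiple of $(1-z/z_0)^{-(1-a)/(1+a)}$ at the same point $z_0=1/\phi(a;x)$. The exponent $(1-a)/(1+a)$ lies in $(0,1)$ since $0<a<1$. \Cref{analicity} and \Cref{flajolet} then give $\mathcal L_n(1,a;x)\sim c\,n^{-2a/(1+a)}\phi(a;x)^n$ with $c>0$. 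Hence $\Lambda$ is the same for every $q\in[0,1]$, and your Gärtner--Ellis argument goes through.
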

To see the proof of \Cref{large_deviations} and its equivalence with the known \gls{ldp} in \cite{franchini2023large}, see \Cref{last_subsection}.

The \gls{erw} is known to have a phase transition at the value $a=1/2$, when $a<1/2$ the walk is diffusive, meaning that its variance grows linearly with $n$, while for values $a\ge1/2$ it becomes super-diffusive \cite{elephants_never_forget}, i.e. \[\lim_{n\to \infty} \frac{Var(S_n)}{n} = \infty. \] Interestingly enough, at first glance the Laplace transform asymptotics does not seem to reflect this behavior and only shows a dependence on the sign of $a$. However, a finer analysis reveals a true difference between the cases $a<1/2$ and $a\ge1/2$: It turns out that $x \mapsto \phi(a;x)$ is of class at least $\mathcal C^2$ when $a<1/2$, but this is not always the case for $a\ge1/2$. Furthermore, when $a>0$, the $\phi(a;x)$ is never analytic at $x=0$, and the number of derivatives it admits depends on $a$. This result was mentioned but not proven in \cite{franchini2023large} and it is stated in \Cref{non_analytic}.

\begin{proposition} \label{non_analytic}
The function $x \mapsto \phi(a;x)$ is analytic at $0$ for $a \in [-1,0)$ and non-analytic for $a \in (0,1]$. Moreover, for $a>0$, it is of class $C^{\lfloor 1/a \rfloor -1}$, but not smoother. The first singular term in its expansion at 0 is of order $\abs{x}^{1/a}$ when $1/a$ is not an even integer, and of order $x^{1/a}\ln(\abs{x})$ when $1/a$ is an even integer.
\end{proposition}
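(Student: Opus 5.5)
The plan is to write $\phi(a;x)$, for $x>0$, as a quotient of explicit local expansions at $0$ and read off the singular part; evenness in $x$ then extends everything to $\abs{x}$. Write $\sinh(s)=s\,g(s)$ with $g$ even, entire near $0$, and $g(0)=1$, so that $\sinh^{\beta}(s)=s^{\beta}g(s)^{\beta}$ with $g^\beta$ even and analytic near $0$ for any real $\beta$. Throughout I will use the fact that $\phi(a;\cdot)$ depends on $x$ only through $\abs{x}$. Hence it is analytic at $0$ exactly when its restriction to $x>0$ equals a function of $x^2$ that is analytic near $0$.

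\emph{Case $a<0$.} Set $b=-1/a>0$ and expand $g(s)^{b-1}=\sum_j h_j s^{2j}$ with $h_0=1$. Integrating termwise gives
\[
b\int_0^x \sinh^{b-1}(s)\,\d s = x^b \sum_j \frac{b\,h_j}{b+2j}x^{2j} \eqqcolon x^b H(x),
\]
where $H$ is even and analytic with $H(0)=1$. Therefore $\phi(a;x)=g(x)^b/H(x)$ is an even analytic function near $0$, and it is analytic in $x$ itself rather than only in $\abs{x}$.

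\emph{Case $a>0$.} Set $c=1/a\ge 1$ and expand $g(s)^{-1-c}=\sum_j k_j s^{2j}$ with $k_0=1$. Splitting $\int_x^\infty=\int_x^1+\int_1^\infty$ and integrating termwise yields, for $x>0$,
\[
c\int_x^\infty \sinh^{-1-c}(s)\,\d s = x^{-c}\Bigl(A(x) + cK\,x^{c} - c\,k_{c/2}\,x^{c}\ln x\Bigr),
\qquad A(x)=\sum_{2j\neq c}\frac{c\,k_j}{c-2j}x^{2j}.
\]
Here $A$ is even and analytic with $A(0)=1$, and the logarithmic term is present only when $c$ is an even integer. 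The constant $K$ is the Hadamard finite part of $\int_0^\infty \sinh^{-1-c}$. It follows that
\[
\phi(a;x)=\frac{g(x)^{-c}}{A(x)+cK\abs{x}^c-c\,k_{c/2}\abs{x}^c\ln\abs{x}}
=\frac{g(x)^{-c}}{A(x)}-\frac{cK\abs{x}^c-c\,k_{c/2}\abs{x}^{c}\ln\abs{x}}{A(x)^2}g(x)^{-c}+\dots
\]
This displays $\phi$ as an even analytic function plus a first singular term of order $\abs{x}^c$, or of order $x^c\ln\abs{x}$ when $c$ is an even integer. The stated regularity class and the failure of further smoothness then follow by differentiating this first singular term, since all remaining terms are smoother.

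The main obstacle is showing that the relevant coefficient does not vanish. Without this, the singular term might be absent: for odd integer $c$, the function $\abs{x}^c$ times something even could in principle cancel out. For even integer $c$, I must show $k_{c/2}\neq 0$, i.e.\ that the residue of $\sinh^{-1-c}$ at $0$ is nonzero. I would get this from the generating function of these residues, e.g.\ via $\sinh^{-1-c}(s)=(2e^{-s})^{1+c}(1-e^{-2s})^{-1-c}$ and a contour or Lagrange-inversion computation giving an explicit binomial-type expression with a fixed sign.

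In the other cases I must show $K\neq0$. The approach is to analytically continue in $\alpha$ the identity $\int_0^\infty\sinh^{\alpha}(t)\,\d t=2^{-\alpha-1}B\bigl(-\tfrac{\alpha}{2},\alpha+1\bigr)$, which follows from the substitution $u=e^{-2t}$. One then checks that for non-even $c$ the continuation at $\alpha=-1-c$ equals the finite part $K$, which holds because no logarithmic terms interfere. This gives $K$ as the ratio $2^{c}\,\Gamma\bigl(\tfrac{1+c}{2}\bigr)\Gamma(-c)/\Gamma\bigl(\tfrac{1-c}{2}\bigr)$. For odd $c$ both Gamma factors blow up, so the ratio must be evaluated as a limit via the reflection formula, and it is finite and nonzero. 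As a sanity check, $a=1$ gives $\phi=e^{\abs{x}}$ and $K=-1$.
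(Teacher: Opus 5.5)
Your proposal is correct apart from the regularity-class claim discussed in the second paragraph. It takes a genuinely different route from the paper's proof, which works with $\rho_a=1/\phi(a;\cdot)$.

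For $a<0$, your termwise integration of $g^{b-1}$ is a direct version of the paper's hypergeometric representation. For $a>0$ the arguments diverge. The paper substitutes $y=\sinh s$, so the integrand becomes $y^{-1-1/a}(1+y^2)^{-1/2}$, whose coefficients $\binom{-1/2}{n}$ are explicit. The logarithmic coefficient $-\frac1a\binom{-1/2}{1/(2a)}$ is then visibly nonzero. The constant $C(a)$ coincides with your $cK$, because for $c\notin 2\mathbb Z$ the terms $\sinh^{2n-c}(x)$ produce no constant term. The paper shows it is nonzero without computing it: $C(a)$ is $\frac1a\int_0^\infty$ of $s^{-1-1/a}$ times the Taylor remainder of $(1+s^2)^{-1/2}$, and that remainder has constant sign.

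You stay in the variable $x$, where the coefficients of $(s/\sinh s)^{1+c}$ are not explicit. You pay for this with a residue computation and an analytic continuation in $\alpha$. Your identification of that continuation with the finite part is correctly justified by the pole locations $\alpha\in\{-1-2j\}$. In exchange you obtain an explicit leading singular coefficient, which the paper never gives.

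Both auxiliary steps shorten considerably:
\begin{enumerate}[i.]
\item The substitution $w=\sinh s$ inside the residue gives $k_{c/2}=[w^{c}](1+w^2)^{-1/2}=\binom{-1/2}{c/2}\neq 0$ at once.
\item Legendre's duplication formula $\Gamma(-c)=\pi^{-1/2}2^{-c-1}\Gamma(-\tfrac c2)\Gamma(\tfrac{1-c}2)$ turns your constant into $K=\Gamma(\tfrac{1+c}2)\Gamma(-\tfrac c2)/(2\sqrt{\pi})$. This is finite and nonzero for every $c\notin 2\mathbb Z$, so odd $c$ needs no limiting argument. It gives $K=-1$ at $c=1$, matching your check.
\end{enumerate}

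Your final step for $a>0$ needs correcting: differentiating the first singular term does \emph{not} give the class in the statement when $1/a$ is not an integer.
\begin{enumerate}[i.]
\item If $c\in(n,n+1)$, the term $\abs{x}^{c}$ is $C^{n}$ but not $C^{n+1}$. For example, $a=2/3$ gives $\abs{x}^{3/2}$, which is $C^1$, whereas the statement claims no more than $C^0$.
\item For odd integer $c$ the term $\abs{x}^{c}$, and for even $c$ the term $x^{c}\ln\abs{x}$, is $C^{c-1}$ but not $C^{c}$.
\end{enumerate}
Your expansion therefore proves $\phi(a;\cdot)\in C^{\lceil 1/a\rceil-1}\setminus C^{\lceil 1/a\rceil}$. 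This agrees with the stated $C^{\lfloor 1/a\rfloor-1}$ only when $1/a$ is an integer. It is also the version consistent with the introduction's claim that $\phi$ is at least $C^2$ for $a<1/2$. The paper's proof never derives the differentiability class explicitly, so you should state the corrected class rather than assert that the stated one follows.
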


The asymptotic of $\L{n}$ is obtained by estimating the coefficients of the generating function \begin{equation} L_a(x,z) \coloneqq \sum_{n\ge 0}  \L{n+1} z^n .\label{eq:generating_function}\end{equation} Several well-known techniques in combinatorics allow to do so. In particular, the one used in this paper (see \Cref{flajolet}) requires the generating function to have a singularity at $z=1$ and to be analytic in a domain of the form 
\begin{equation*}
    \Delta(\epsilon,\theta) \coloneqq \{z : \abs{z} \le 1 + \epsilon,  \abs{\mathrm{Arg}(z-1)} \ge \theta \}.
\end{equation*}
 If $F$ is analytic in some $\Delta(\epsilon,\theta)$ with a singularity at $z=1$ and near the singularity it satisfies an asymptotic behavior $F(z) \sim (1-z)^{\alpha}$, then $[z^n]F(z) \sim n^{-\alpha-1}/\Gamma(-\alpha)$ as $n\to\infty$. More details of this method can be found in \cite{flajolet1990singularity}, see also \Cref{flajolet}. Notice that, by normalization, it is possible to locate the singularity in any point of the complex plane, so to estimate the coefficients of a certain function $F$, it suffices to prove that it is meromorphic in a domain of form $\Delta(\epsilon,\theta)$ with a single singularity at the border (see \cite{flajolet1990singularity}). This will happen to be the case of $L_a(x,z)$, whose exact form is given by \Cref{form_solution}.

\begin{proposition}
\label{form_solution} For every $a\in [-1,1)\setminus \{0\}$, there exists $r_a(x)>0$ such that for all $x\in \R$, and $z \in (-r_a(x), r_a(x))$, the generating function $L_a(x,z)$ defined in \eqref{eq:generating_function} is equal to
\begin{equation} \label{eq:equation_for_Laplace}
        L_a(x,z) = \sinh^{-\frac{1}a} (x)A\left(k^{-1}\left( k(\sinh(x)) - z\sinh^{-\frac1a}(x)  \right) \right),
\end{equation}
    where $A$ is the algebraic function
\begin{align}
     A(t) &\coloneqq \left( (1-2q)t +\sqrt{1+t^2} \right)t^\frac1a,\label{eq:algebraic_function} \\
     \intertext{and $k^{-1}$ is the inverse under composition of the strictly monotone function $k:\R^+ \to \R^+$, defined as}
     k(t) &\coloneqq \left\{ \begin{array}{cc}
          -\frac1{a} \int_0^t \frac{\d s}{s^{\frac{1+a}{a}}\sqrt{1+s^2}}, & a \in [-1,0), \\
          \frac1{a} \int_t^\infty \frac{\d s}{s^{\frac{1+a}{a}}\sqrt{1+s^2}},  & a \in (0,1).
     \end{array} \right. \label{eq:definition_k}
 \end{align}
\end{proposition}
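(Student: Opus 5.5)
The plan is to turn the memory rule \eqref{eq:recursion_probability} into a linear recurrence for $\L{n}$, then into a first-order PDE for $L_a(x,z)$, and finally to check that the right-hand side of \eqref{eq:equation_for_Laplace} solves that PDE with the same initial datum.

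\emph{Step 1: recurrence.} Conditioning on $\mathscr F_n$ and using \eqref{eq:recursion_probability},
\[
\mathbb E\bigl[e^{-xS_{n+1}}\mid \mathscr F_n\bigr]=e^{-xS_n}\Bigl(\cosh x-\frac{a S_n}{n}\sinh x\Bigr).
\]
Since $\partial_x \L{n}=-\mathbb E[S_ne^{-xS_n}]$, taking expectations gives
\[
n\,\L{n+1}=n\cosh(x)\,\L{n}+a\sinh(x)\,\partial_x\L{n},
\]
with initial value $\L{1}=\cosh x+(1-2q)\sinh x$.

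\emph{Step 2: PDE.} Shift the recurrence by one index, multiply by $z^n$ and sum. Using $\sum_n (n+1)\L{n+2}z^n=\partial_z L_a$, this yields
\[
(1-z\cosh x)\,\partial_z L_a-a\sinh x\,\partial_x L_a=\cosh(x)\,L_a,\qquad L_a(x,0)=\cosh x+(1-2q)\sinh x.
\]
The bound $\abs{\L{n}}\le e^{n\abs{x}}$ shows that $L_a(x,\cdot)$ is analytic in $\abs{z}<e^{-\abs{x}}$, so $L_a$ is a genuine solution there.

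\emph{Uniqueness.} Extracting the coefficient of $z^n$ from the PDE reproduces exactly the recurrence of Step 1. Hence any function analytic in $z$ near $0$, jointly smooth in $x$, that satisfies the PDE and the initial condition has Taylor coefficients $\L{n+1}$. It therefore suffices to show that the candidate
\[
G(x,z)=s^{-1/a}A\bigl(k^{-1}(u)\bigr),\qquad s=\sinh x,\quad u=k(s)-zs^{-1/a},
\]
is analytic near $z=0$ and solves the PDE.

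\emph{Step 3: verification for $x>0$.}
\begin{itemize}
\item At $z=0$ we have $k^{-1}(u)=s$, so $G(x,0)=s^{-1/a}A(s)=(1-2q)s+\sqrt{1+s^2}$, which equals $\L{1}$ because $\sqrt{1+\sinh^2x}=\cosh x$.
\item Since $k$ is strictly monotone with $k'(s)=\mp\frac1a s^{-1-1/a}(1+s^2)^{-1/2}\neq 0$, the implicit function theorem makes $w=k^{-1}(u)$ analytic for $\abs{z}<r_a(x)$ small. This step produces $r_a(x)$.
\item Write $\log G=-\frac1a\log s+\log A(w)$ and substitute into the PDE, using $\partial_x s=\cosh x$ and the formulas for $\partial_z u$, $\partial_x u$.
\end{itemize}
The content of this computation is that $u$ is not exactly a first integral of the characteristic field. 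What I expect is that the combination $\partial_x u$ produced by $k'(s)$ and the $zs^{-1/a}$ term is proportional to $\partial_z u$ with the right factor. The residual then reduces to an identity between $A'(w)/A(w)$ and $1/(k'(w)\,w^{1/a})$, together with the $-\frac1a\cosh x/s$ term cancelling against $\cosh(x)L$.

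Checking this is the main obstacle. In particular one must confirm that $A$, with the factor $t^{1/a}$ and the $\sqrt{1+t^2}$ matching $k'$, is precisely the function making the residual vanish. If the direct substitution becomes messy, I would instead solve the characteristic ODEs explicitly:
\begin{itemize}
\item $\d x/\d\tau=-a\sinh x$ gives a closed form for $\sinh x$ along characteristics;
\item the $z$-equation is linear in $z$ and integrates via $\int s^{-1-1/a}(1+s^2)^{-1/2}\,\d s$, which is how $k$ arises;
\item along characteristics $L$ is multiplied by an explicit integrating factor, which produces the $s^{-1/a}$ prefactors.
\end{itemize}
Reading off the invariant $k(s)-zs^{-1/a}$ from this would derive the formula rather than just check it.

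\emph{Step 4: $x\le 0$.} For $x<0$, apply \eqref{eq:even}: $\mathcal L_n(q,a;x)=\mathcal L_n(1-q,a;-x)$. Then interpret $\sinh^{-1/a}(x)$ and $t^{1/a}$ consistently, with real powers of negative numbers matching the $q\leftrightarrow 1-q$ swap in $A$, and check that the formula is invariant under this symmetry. The point $x=0$ is obtained by continuity in $x$ of both sides, with $k$ and $A$ defined on $\R^+$ as stated. The case $a=1$ is excluded because there the integral defining $k$ behaves differently; it is covered separately by \eqref{eq:zero_and_one}.
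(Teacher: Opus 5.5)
\textbf{Verdict.} Your strategy is sound and differs from the paper's, but the one step that links the candidate formula to the equation is never carried out, and your forecast of what that step involves is wrong.

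\textbf{What is correct.} Steps 1--2 are correct. So is the uniqueness argument: comparing $z^n$-coefficients in \eqref{eq:eqdiff} gives $(n+1)c_{n+1}=(n+1)\cosh(x)\,c_n+a\sinh(x)\,c_n'$. Hence any solution jointly analytic near $z=0$ with $c_0=\L{1}$ has $c_n=\L{n+1}$.

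\textbf{The missing step.} Everything then rests on showing that $G=s^{-1/a}A(k^{-1}(u))$ solves \eqref{eq:eqdiff}. Write $V=(1-z\cosh x)\,\partial_z-a\sinh x\,\partial_x$, so that \eqref{eq:eqdiff} reads $VL=\cosh(x)L$. Your guess that $a\sinh(x)\,\partial_x u$ is proportional to $\partial_z u$ is right, and the factor is exactly $1-z\cosh x$. In both sign cases $k'(s)=-\frac1a s^{-1-1/a}(1+s^2)^{-1/2}$, and $\cosh x=\sqrt{1+s^2}$, so
\begin{align*}
a\sinh(x)\,\partial_x u = a s\cosh(x)\,k'(s)+z s^{-\frac1a}\cosh x = -s^{-\frac1a}+z s^{-\frac1a}\cosh x=(1-z\cosh x)\,\partial_z u .
\end{align*}
Thus $Vu=0$: $u$ \emph{is} an exact first integral, contrary to your preceding sentence. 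Also $V\bigl(s^{-1/a}\bigr)=\cosh(x)\,s^{-1/a}$. Hence
\begin{align*}
V\bigl(s^{-1/a}\Phi(u)\bigr)=\cosh(x)\,s^{-1/a}\Phi(u)
\end{align*}
for \emph{every} differentiable $\Phi$. There is no residual involving $A'(w)/A(w)$ and no identity linking $A$ to $k'$. The function $A$ is fixed solely by the initial condition $s^{-1/a}A(s)=\L{1}$, which you already checked. The factor $\sqrt{1+t^2}$ in $A$ comes from $\cosh$ in $\L{1}$, not from $k'$. With these two lines inserted, your proof is complete for $x>0$.

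\textbf{Comparison with the paper.} The paper \emph{derives} the formula rather than verifying it. It sets $D_n=\L{n+1}\sinh^{(n-a)/a}(x)$, which turns the recurrence into the nested-derivative relation $nD_n=(fD_{n-1})'$ with $f=a\sinh^{(a+1)/a}$. The generating function of the $D_n$ then satisfies the pure transport equation $\partial_x(fG)=\partial_z G$. Characteristics give $fG=g(z+h(x))$ with $h(x)=-k(\sinh x)$, and the rescaling $z\mapsto z\sinh^{-1/a}(x)$ returns to $L_a$. That rescaling is exactly what makes your $u$ a first integral.

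The paper's route explains where $k$ and $A$ come from. Yours only verifies the formula, but your coefficient-uniqueness argument genuinely identifies the closed form with the power series near $z=0$. The paper's characteristic computation leaves that point implicit.

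\textbf{On Step 4.} This is only a sketch. The right-hand side has no literal meaning for $x<0$ (here $k$ is defined on $\R^+$), and it is singular at $x=0$. Passing to $x=0$ by continuity would need a lower bound on $r_a(x)$ near $0$, which you do not supply. The paper does no more: it restricts to $x>0$ and invokes \eqref{eq:even}. So this is a weakness of the statement rather than of your method.
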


The expression found for $L_a(x,z)$ is reminiscent to several functions that appear in probabilistic combinatorics and in the study of the \gls{erw}. In \cite{guerin2025elephantpolynomials}, an analogous function arises as the moment generating function of the limiting distribution of the \gls{erw} in the superdiffusive regime. Related expressions also appear in \cite{bercu2024sharp} and \cite{bryc2009large} in connection with the distribution of the number of descents in a random permutation and the number of leaves in random trees, respectively. Notably, all of these models, including the \gls{erw}, admit formulations in terms of urn processes. In this paper, we are able to derive this explicit expression of $L_a(x,z)$ for all $a\in[-1,1]$. More precisely, we determine its domain of analyticity in the variable $z$ and describe the location and nature of its singularities for every fixed value of $a$. The main challenge lies in obtaining a unified treatment for the full range of values of $a$, which exhibit markedly different analytic behaviors.

The function $L_a(x,z)$ has a number of remarkable properties. The auxiliary function $k(t)$, defined in \eqref{eq:definition_k}, is closely related to several classical special functions: it can be expressed in terms of hypergeometric and beta functions (see \cref{eq:hypergeometric,eq:beta}, respectively), and it is also a particular instance of a Schwarz--Christoffel integral (see \Cref{sc}). Historically, Schwarz--Christoffel mappings arose as some of the only explicit realizations of the Riemann Mapping Theorem. This connection not only yields explicit representations of $L_a(x,z)$ in terms of well-studied functions, but also allows one to determine its optimal domain of analyticity and describe its singularities in considerable generality. Although related special functions have previously appeared in the study of urn models (see, e.g., \cite{flajolet2005urns}), to the best of the author's knowledge this is the first time that Schwarz--Christoffel techniques have been used in the context of the \gls{erw}.

The function $A$ is affine in $q$, which allows us to decompose $L_a(x,z)$ into the superposition of a term proportional to $q$ and a term proportional to $1-q$:
$$ A(t) = (1-q)t^{\frac{a+1}{a}} - qt^{\frac{a+1}{a}} + t^\frac1a\sqrt{1+t^2}.$$
For particular values of $a$, the expression for $L_a(x,z)$ simplifies considerably. Notably, when $a=-1$, one has $k(t)=\arcsinh(t)$ and
$$L_a(x,z)=\sinh(x)\left(1-2q+\coth\left(x-z\sinh(x)\right)\right).$$ When $a=1/n$ with $n\in\mathbb Z\setminus\{0\}$, repeated integration by parts yields recursive expressions for $k(z)$ (see \Cref{last_subsection}).

\Cref{form_solution} is found using the first-order transport-type partial differential equation
\begin{equation}
\left(1-z\cosh(x)\right)\partial_zL_a(x,z)
= a\sinh(x)\partial_xL_a(x,z)+\cosh(x)L_a(x,z),
\label{eq:eqdiff}
\end{equation}
which characterizes the generating function $L_a(x,z)$. Solving this equation, with an appropiate initial condition, leads to the explicit representation \eqref{eq:equation_for_Laplace}. Thus, the problem of determining the generating function is reduced to the analysis of this transport equation, whose characteristic curves naturally give rise to the function $k$ defined in \Cref{eq:definition_k}. 
Similar equations arise in problems related to random walks and combinatorics. In particular, in \cite{guerin2025elephantpolynomials,bercu2024sharp,bryc2009large}, expressions related to \cref{eq:equation_for_Laplace} discussed above are obtained by solving similar differential equations. Furthermore, a more general version encompassing several combinatorial subcases can be found in \cite{eulerian_recurrences}. Notably, in a more analytic setting, Dominici \cite{dominici2003nested} uses a similar equation to derive an expansion of the inverse of a function that can be written in a similar way to $k$. The recurring appearance of such equations across seemingly different contexts suggests that these models share a common combinatorial--analytic structure.

The singularity analysis method applied to $L_a(x,z)$ requires proving that it is analytic in a domain of form $\Delta(\epsilon,\theta)$ for some $\epsilon >0, \theta \in (0,\pi)$ (see \Cref{flajolet}). This is guaranteed by \Cref{analicity}.

\begin{lemma} \label{analicity}
   For every $a\in [-1,1)\setminus \{0\}$, the generating function $L_a(x,z)$ extends analytically to a domain $D$ that contains the half plane $\{ z \colon \Re(z) < k(\sinh(x))\sinh^\frac1a(x) \}$ and a circular sector $\Theta$ around $z_0 = k(\sinh(x))\sinh^\frac1a(x)$ defined as $$\Theta \coloneqq \left\{ z\in \C \colon \abs{z-z_0} < r, \abs{\arg( z - z_0 )} > \theta \right\},$$
   with $r>0, \theta \in (\pi,2\pi)$.
\end{lemma}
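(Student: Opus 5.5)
We work from the representation of \Cref{form_solution}. Fix $x>0$ (the case $x<0$ follows from \cref{eq:even}) and $a\in[-1,1)\setminus\{0\}$, and write $s=\sinh(x)$. Then
\[
L_a(x,z)=s^{-1/a}A\bigl(k^{-1}(w)\bigr),\qquad w=k(s)-z s^{-1/a}.
\]
The map $z\mapsto w$ is affine. It sends the half plane $\{\Re z<k(s)s^{1/a}\}$ onto $\{\Re w>0\}$, and it sends $z_0$ to $w=0$. The problem is therefore to show two things: that $k^{-1}$ extends analytically to the right half plane $\{\Re w>0\}$ and to a slit neighbourhood of $w=0$, and that $A\circ k^{-1}$ stays analytic there.

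\textbf{Step 1: $k$ as a Schwarz--Christoffel map.} Extend $k$ to the upper half plane $\{\Im t>0\}$ by the same integral formula \eqref{eq:definition_k}, using principal branches of $s^{-(1+a)/a}$ and of $\sqrt{1+s^2}$. The integrand has prevertices at $0$ and $\pm i$, with exponents $-(1+a)/a$ and $-1/2$, and at $\infty$ it behaves like $s^{-1/a-2}$. To keep $\sqrt{1+s^2}$ single valued I would rather work in the quarter plane $Q=\{\Re t>0,\Im t>0\}$, or symmetrize. Concretely, the substitution $u=s^2$ turns $k$ into a genuine Schwarz--Christoffel integral on the upper half plane with prevertices $u=0$, $u=-1$ and $u=\infty$. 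That integral maps onto a (possibly degenerate, unbounded) triangle with angles determined by the exponents $-\frac{1+a}{2a}-\frac12$ and $-\frac12$.

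From the boundary behaviour I would then read off the image of the region $\{\Re t>0\}$ near the positive real axis under $k$. For $a>0$ the point $t=+\infty$ is sent to $k=0$. For $a<0$ the point $t=0$ is sent to $k=0$. In both cases I expect the image of the right half $t$-plane to contain the right half $w$-plane, with $k$ injective there by the Schwarz--Christoffel/Darboux argument (a boundary correspondence that is one-to-one implies the map is univalent). Hence $k^{-1}$ is analytic on $\{\Re w>0\}$.

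\textbf{Step 2: local analysis at $w=0$.} This gives the sector $\Theta$. Take $a>0$ and let $t\to\infty$. Then $k(t)\sim \frac{1}{a}\cdot\frac{t^{-1/a-1}}{1/a+1}$, more precisely $k(t)=c\,t^{-(1+a)/a}\bigl(1+O(t^{-2})\bigr)$. Inverting gives $k^{-1}(w)\sim (w/c)^{-a/(1+a)}$. This is analytic in a slit disc around $0$ provided the opening of the image exceeds $\pi$. The image of the sector $|\arg t|<\pi/2$ has $|\arg w|<\frac{(1+a)\pi}{2a}$, which is larger than $\pi/2$; continuing slightly past the imaginary $t$-axis, away from the branch points $\pm i$, should give a full slit neighbourhood $|\arg w|<\pi+\delta$ for a small $\delta>0$. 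This matches $\theta\in(\pi,2\pi)$ measured from $z-z_0$, since $w$ and $z$ differ by a reflection. For $a<0$ the expansion at $t=0$ is $k(t)\sim c\,t^{-1/a}$, so $k^{-1}(w)\sim (w/c)^{-a}$, and the same argument applies.

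\textbf{Step 3: composition with $A$.} Composing with $A$ preserves analyticity as long as $k^{-1}(w)$ avoids the branch points of $A$, namely $0$, $\pm i$ and $\infty$. Inside the right half $t$-plane this is automatic. Near $w=0$ the value $k^{-1}(w)$ tends to $\infty$ or $0$, which is exactly where the singular behaviour $(1-z/z_0)^{\alpha}$ needed later is created. This is consistent with the statement, which only requires a slit sector.

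\textbf{Main obstacle.} I expect the hardest part to be the global univalence and the precise image in Step 1. One has to verify that the polygon image of the boundary rays (the positive real axis, the imaginary axis split at $i$, and the arcs at infinity) encloses all of $\{\Re w>0\}$ uniformly in $a\in[-1,1)\setminus\{0\}$. The integrability at $\infty$ or at $0$, and the angle at $t=i$, change with $a$. I would handle this by computing $\arg k'$ along each boundary ray explicitly (it is piecewise constant) to trace the boundary polygon, and then invoke the argument principle.
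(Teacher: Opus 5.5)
Your Step~2 is fine and matches the paper's local inversion. Writing $k(t)=t^{-1/a}h(t^{2})$ (resp.\ $c\,t^{-(1+a)/a}h(t^{-2})$ for $a>0$), with $h$ analytic and $h(0)\neq 0$, makes $k^{-1}$ an analytic function of a power of $w$ on a slit disc. The gap is in Step~1, which carries the whole global content of the lemma. The claim that $k$ is univalent on $\{\Re t>0\}$ with image containing $\{\Re w>0\}$ is false for every $a\neq-\frac12$. So neither the Darboux argument nor the ``main obstacle'' you plan to verify can go through.

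\textbf{The case $a\in(-1,-\frac12)$.} Here $k$ is univalent on the right half-plane, but its image is the polygon $P$ of the paper, whose unbounded sides $R_1,R_2$ make the angle $-\pi(1+\frac1a)<\pi$ at infinity. For example, for $a=-\frac23$ one has $k(t)\sim 3t^{1/2}$, so $1+iR\notin P$ for large $R$. (For $a=-1$ the image is the half-strip $\{\Re w>0,\ \abs{\Im w}<\frac\pi2\}$, which is harmless only because $k^{-1}=\sinh$ is entire.) Hence $k^{-1}$ must be continued \emph{beyond} $k(\{\Re t>0\})$; the paper does this by repeated Schwarz reflection of $P$ across $R_1,R_2$. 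After one reflection across $R_1$ the continued $k^{-1}$ takes values in $\{\Re t<0\}$. So your Step~3 argument (``$k^{-1}(w)$ stays in the right half $t$-plane, where $A$ is analytic'') also fails there. You have to continue $A$ together with $k^{-1}$, and check that the vertices of the reflected copies of $P$, where the continued $k^{-1}$ branches, lie in $\{\Re w\le 0\}$.

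\textbf{The cases $a\in(-\frac12,0)$ and $a\in(0,1)$.} These are equivalent via $t\mapsto 1/t$, and here the failure is the opposite one. Near the vertex ($t=0$, resp.\ $t=\infty$) $k$ behaves like $t^{-1/a}$ (resp.\ $t^{-(1+a)/a}$). It therefore sends a half-plane neighbourhood of the vertex onto a sector of opening $-\pi/a>2\pi$ (resp.\ $\pi(1+a)/a>2\pi$), and is not injective on $\{\Re t>0\}$. For instance, for $a=-\frac13$ one has $k(t)=3\int_0^t s^2(1+s^2)^{-1/2}\,\mathrm{d}s\approx t^3$, and $k$ takes the same negative real value at two conjugate points $\epsilon e^{\pm i\varphi}$ with $\varphi\approx\frac\pi3$.

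The boundary correspondence is then not one-to-one. Restricting to the quarter plane via $u=t^2$ does not rescue this in general: the Schwarz--Christoffel triangle has angle $-\frac{\pi}{2a}>2\pi$ at the vertex when $a\in(-\frac14,0)$, and correspondingly for $a\in(0,\frac13)$. What must be shown is that the branch of $k^{-1}$ that is real on the positive axis continues along every path in $\{\Re w>0\}$. Equivalently, a proper subdomain of $\{\Re t>0\}$, bounded by the level curves $\arg k=\pm\frac\pi2$ issuing from the vertex, must be mapped univalently onto $\{\Re w>0\}$. The paper gets this without any global univalence: it continues $k^{-1}$ as the solution of $(k^{-1})'=g(k^{-1})$ along vertical lines and controls the vector field $ig$.

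Your proposal contains neither this ingredient nor the reflection step, so the global part of the lemma is not established.
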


\Cref{analicity} is a key component of the analysis rather than a purely technical result. Indeed, the asymptotic behavior of the Laplace transform is entirely determined by the analytic structure of $L_a(x,z)$ in the complex $z$ variable, and in particular by the location and nature of its dominant singularity. The proof of the analytic continuation via a Schwarz--Christoffel mapping therefore provides the essential bridge between the probabilistic structure of the walk and its complex-analytic properties.

Moreover, once the singularity structure is identified, both the exponential growth rate of the coefficients and their polynomial corrections follow from singularity analysis techniques. In the case of the function studied here, this growth rate is related to the limiting cumulant generating function, while the subexponential terms capture the leading correction for the exact Laplace transform asymptotics.

\Cref{fig:plane_and_sector} shows the union of a half-plane and a circular sector with radius $r$ and angle $\theta \in (\pi,2\pi)$. As it is shown in the figure, a domain of the form $\Delta(\epsilon,\theta)$ can always be found in $D$ for every value of $x$ and $a$.

\begin{center}
\begin{figure}[!ht]
    \centering
    % PARAMETERS
\def\alphadeg{140} % angle of the sector in degrees
\def\R{2} % radius
\def\axeslen{3.5} % axes length
\def\h{3}
\def\eps{0.35}

% For a sector opening to the left, center it around 180°.
\pgfmathsetmacro{\halfalpha}{\alphadeg/2}
\pgfmathsetmacro{\angstart}{90}
\pgfmathsetmacro{\angend}{90+\halfalpha}

\begin{tikzpicture}[>=latex, thick, scale=0.6, yscale=1, xscale=-1]

% unified color fill
\definecolor{fillcolor}{RGB}{220,220,255}

% Right half-plane fill
\begin{scope}
\clip (0,-\axeslen) rectangle (2*\axeslen,\axeslen);
\fill[pattern={Lines[angle=45,distance={4pt},line width={0.01mm}]},pattern color=gray] (0,-\axeslen) rectangle (2*\axeslen,\axeslen);
\end{scope}

% Circular sector fill (opening to the left)
\fill[pattern={Lines[angle=45,distance={4pt},line width={0.01mm}]},pattern color=gray] (0,0) -- (\angstart:\R) arc(\angstart:\angend:\R) -- cycle;
\fill[pattern={Lines[angle=45,distance={4pt},line width={0.01mm}]},pattern color=gray] (0,0) -- (-\angstart:\R) arc(-\angstart:-\angend:\R) -- cycle;

% Angle measure
\draw[dashed, <-] (\angend:\R) arc(\angend:\angend+\alphadeg/2:\R);

% Annotations
\draw (-2, 0.6) node [anchor=north west][inner sep=0.75pt] {$\theta$};
\draw (\h-0.1, \axeslen+0.5) node [anchor=north west][inner sep=0.75pt] {$\Im(z)$};
\draw (-\axeslen+0.5,-1) node [anchor=south west][inner sep=0.75pt] {$\Re(z)$};

% Borders
\draw [line width=1.5] (0,\axeslen) -- (0,\R);
\draw [line width=1.5](0,-\axeslen) -- (0,-\R);
\draw [line width=1.5] (0,\R) arc(\angstart:\angend:\R);
\draw [line width=1.5] (0,-\R) arc(-\angstart:-\angend:\R);
\draw [line width=1.5] (0,0) -- ({\R*cos(\angend)},{\R*sin(\angend)});
\draw [line width=1.5] (0,0) -- ({\R*cos(-\angend)},{\R*sin(-\angend)});

% Axes only
\draw[-to] (2*\axeslen+0.3,0) -- (-\axeslen,0);
\draw[-to] (\h,-\axeslen) -- (\h,\axeslen+0.5);

% Pacman
\draw [dashed, line width=0.25] (\h,0) circle (\h);
\draw [line width=1.5] ([shift=(-6:{-\h-\eps})]\h,0) arc (174:-174:{\h+\eps});
\draw [line width=1.5] (-\eps, \eps) -- (0,0);
\draw [line width=1.5] (-\eps, -\eps) -- (0,0);
\draw (\h,0.1) -- (\h,-0.1);
\draw (0,-0.35) node [anchor=north east, fill= white][inner sep=0.75pt] {\small $z_0$};%{\small $\frac{k(\sinh(x))}{\sinh^{-1/a}(x)}$};

\end{tikzpicture}
    \caption{The shaded area is the union of a half-plane and a circular sector around $z_0$ with angle $2\pi-\theta$ where $\theta \in (0,\pi)$ and radius $r>0$. The function $L_a(x,z)$ is analytic inside the shaded area and has a singularity at $z_0$. For every $(x,a)$ it is possible to find $(\epsilon,\theta)\in \mathbb R_+\times (\pi,2\pi)$ such that $L_a(x,z)$ is analytic in a domain of the form $\Delta(\epsilon,\theta)$.}
    \label{fig:plane_and_sector}
\end{figure}
\end{center}

\Cref{analicity} allows one to find, for fixed $a,x$, the maximal value of $r_a(x)$ defined in \Cref{form_solution} as the inverse under multiplication of $\fgd$.

\begin{corollary}For every $a\in [-1,1]\setminus \{0\}$, define the maximal radius of convergence  in \Cref{form_solution} as \begin{equation*}
    \rho_a(x) \coloneqq \sup\left\{ r_a(x) \colon L_a(x,z) \text{ is analytic in } \abs{z} < r_a(x) \right\},
\end{equation*}where $r_a(x)$ is as defined in $\Cref{form_solution}$. Then $\rayon$ is given by $\rayon = 1/\fgd$.
\end{corollary}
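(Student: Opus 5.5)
The plan is to obtain $\rho_a(x)$ in two matching steps. First, \Cref{analicity} gives a lower bound $\rho_a(x)\ge\abs{z_0}$. Second, the point $z_0$ turns out to be a genuine singularity, which caps the radius at $\abs{z_0}$. The main work is then to check that $\abs{z_0}=1/\fgd$. We treat $x>0$ first; the case $x<0$ follows from \eqref{eq:even}, since $\fgd$ depends only on $\abs{x}$, and $x=0$ can be handled directly (there $L_a$ reduces to the $a=0$-type geometric series, or one passes to the limit).

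\textbf{Identifying $z_0$.} Substituting $s=\sinh(u)$ in \eqref{eq:definition_k} gives $\d s/\sqrt{1+s^2}=\d u$. Hence, for $a>0$,
\[
k(\sinh x)=\tfrac1a\int_x^\infty \sinh^{-1-\frac1a}(u)\,\d u,
\]
and for $a<0$,
\[
k(\sinh x)=-\tfrac1a\int_0^x\sinh^{-1-\frac1a}(u)\,\d u.
\]
Therefore
\[
z_0=k(\sinh x)\sinh^{\frac1a}(x)=1/\fgd>0,
\]
directly from \eqref{eq:definition_fgd}.

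\textbf{Lower bound.} By \Cref{analicity}, $L_a(x,\cdot)$ extends analytically to the half plane $\{\Re z<z_0\}$. This half plane contains the open disc $\abs{z}<z_0$, because $z_0>0$. By uniqueness of analytic continuation, this extension agrees with the power series \eqref{eq:generating_function} near $0$ (\Cref{form_solution}). Hence the series converges on $\abs z<z_0$, and $\rayon\ge z_0$.

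\textbf{Upper bound (main obstacle).} We must show that $z_0$ is a true singularity, i.e.\ that $L_a$ cannot be continued analytically to a neighbourhood of $z_0$. As $z\uparrow z_0$ along the reals, the argument
\[
w(z)=k(\sinh x)-z\sinh^{-\frac1a}(x)
\]
tends to $0$. So $t(z)=k^{-1}(w(z))$ tends to the endpoint where $k$ vanishes: $t\to\infty$ when $a>0$, and $t\to 0$ when $a<0$.

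For $a>0$, the tail behaves like $k(t)\sim t^{-1/a}$, so $t\sim w^{-a}$. Then
\[
A(t)\sim \bigl((1-2q)+1\bigr)t^{1+\frac1a}=2(1-q)\,t^{1+\frac1a},
\]
which blows up like $w^{-(a+1)}$ whenever $q<1$. This is a pole-type singularity, so $L_a$ is unbounded near $z_0$. If $q=1$, the leading terms cancel; one then expands $\sqrt{1+t^2}-t\sim 1/(2t)$ to get $A(t)\sim t^{\frac1a-1}/2$. This is a nonanalytic power of $w$ unless it happens to be an integer, in which case one goes to the next order or uses the lower-order terms of $k^{-1}$.

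For $a<0$ the approach is different. Near $t=0$ we have $k(t)\sim t^{-1/a}$ (note $-1/a>0$), so $t\sim w^{-a}$, a fractional or non-polynomial power in general. Then $A(t)\sim t^{1/a}$ behaves like $w^{-1}$. So $L_a$ again has a pole-like blow-up, and in particular is unbounded near $z_0$.

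The delicate points are the borderline cases where an expansion is analytic in $w$. In those cases I would instead use Pringsheim's theorem. For $x>0$ the coefficients $\L{n+1}$ are positive, so the dominant singularity lies on the positive real axis at $z=\rayon$. Since $L_a(x,z)\to\infty$ as $z\uparrow z_0$, it suffices to show $\abs{L_a}$ is unbounded there. Unboundedness is exactly what the leading-order asymptotics above deliver, provided $A$ is nonzero at leading order. That holds since $t^{\frac1a}\sqrt{1+t^2}$ dominates with a positive coefficient: the coefficient is $2-2q+\ldots$ for $a>0$, and $1$ for $a<0$. Even in the $q=1$, $a>0$ case, one checks positivity of $A(t)$ and that $t^{\frac1a-1}\to\infty$ when $a<1$.

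Combining the two bounds gives $\rayon=z_0=1/\fgd$.
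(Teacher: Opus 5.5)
Your argument is the one the paper intends. The half-plane $\{\Re z<z_0\}$ from \Cref{analicity} contains the disc $\abs{z}<z_0$. The blow-up of $A(k^{-1}(w))$ as $w\downarrow 0$, computed in the proof of \Cref{result}, makes $z_0$ a genuine singularity. Your substitution $s=\sinh u$ correctly gives $z_0=k(\sinh x)\sinh^{\frac1a}(x)=1/\fgd$. Two points need correcting.

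\textbf{The tail asymptotic for $a>0$.} Since $\sqrt{1+s^2}\sim s$, the integrand of $k$ behaves like $s^{-2-\frac1a}$. So $k(t)\sim t^{-1-\frac1a}/(a+1)$, not $t^{-\frac1a}$. Hence $k^{-1}(w)\sim((a+1)w)^{-\frac{a}{a+1}}$ and $A(k^{-1}(w))\sim\frac{2(1-q)}{a+1}w^{-1}$. This is a simple-pole blow-up, as for $a<0$, not $w^{-(a+1)}$. For $q=1$ one gets $A\sim\frac12((a+1)w)^{-\frac{1-a}{1+a}}$ instead. The slip is harmless, because you only use unboundedness. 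In fact your treatment of $q=1$ for $a\in(0,1)$ covers a case the paper skips, since its constant $2(1-q)/(a+1)$ vanishes there. Pringsheim's theorem is not needed: once the series equals the closed form on $[0,z_0)$, unboundedness as $z\uparrow z_0$ already gives $\rayon\le z_0$. For $a<1$ no ``borderline'' analytic case occurs.

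\textbf{The case $a=1$.} This is the real omission. The value $a=1$ lies outside \Cref{form_solution} and \Cref{analicity}, so your lower bound does not apply there. The explicit formula gives $L_1(x,z)=\frac{qe^{-x}}{1-ze^{-x}}+\frac{(1-q)e^{x}}{1-ze^{x}}$, and $\phi(1;x)=e^{\abs{x}}$. So for $x>0$ one gets $\rayon=e^{-x}=1/\fgd$ only when $q<1$. When $q=1$ (resp.\ $q=0$ for $x<0$), the walk is deterministic. Then $L_1(x,z)=e^{-x}/(1-ze^{-x})$ has radius $e^{x}$, and the statement is false. Your expansion $A(t)\sim t^{\frac1a-1}/2$ is bounded exactly at $a=1$, so it detects this exception. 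You should state it as an explicit exclusion rather than leave it implicit.
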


\subsection*{Outline of the paper.}
\Cref{section:proofs} contains the proofs of the main results. \Cref{form_solution} is proved first, based on standard properties of the \gls{erw} and classical methods for first-order PDEs. 

The main technical component is the proof of \Cref{analicity}, which is divided into several cases: The regimes $a \in (-1/2,0)\cup(0,1)$ are treated in \Cref{subsection_a-120} using a complex-analytic differential equations approach, while the case $a\in (-1,-1/2]$ is handled in \Cref{subsection_a-1-12} via a Schwarz--Christoffel mapping. The proof of \Cref{result} is completed therein.

Finally, the proofs of \Cref{large_deviations} and \Cref{non_analytic} are provided in \Cref{last_subsection}. It also contains two additional results: the monotonicity and continuity at $a=0$ of the map $a \mapsto \phi(a;x)$, and a closed-form expression for $\rayon$ for specific values of $a$.

\section{Proof of the main results} \label{section:proofs}

\noindent\textbf{Remark.} Identity \eqref{eq:even} allows one to see that the values of $\L{n}$ for $x<0$ can be easily recovered from those for $x>0$ by evaluating at $1-q$ instead of $q$ whenever necessary. From now on, all of the computations will suppose $x>0$ unless stated otherwise.

\begin{proof}[Proof of \Cref{form_solution}]
A conditioning on $\mathcal F_n$ allows us to use \eqref{eq:recursion_probability} to find a differential-recursive equation for $\mathcal L_{n+1}(a;x)$ with $n\ge 1$:
\begin{align*}
    \L{n+1} &= \mathbb E \left[ e^{-xS_n} \left( \frac12 \left( 1 + a\frac{S_n}n \right)e^{-x} + \frac12\left( 1 - a\frac{S_n}n\right)e^{x} \right) \right]\\
    &= \L{n} \cosh(x) + \frac an \mathcal L_n'(a;x) \sinh(x).
\end{align*}

To make the sequence $\{\L{n}\}_{n\in\N}$ more convenient to work with, we make use of the reversible normalization for $x\in \R_+, n\ge 0, a \neq 0$ given by
\begin{equation*}
        D_n(x) \coloneqq \frac{ \L{n+1}}{\sinh^{\frac{a-n}{a}}(x)}
    \end{equation*}

The differential-recursive equation for $\L{n}, n\ge 1$ leads to 
    \begin{align*}
        D_{n}(x) = \frac{\mathcal L_{n+1} (a;x)}{\sinh^{\frac{a-n}{a}}(x)} &= \frac{\mathcal L_{n}(a;x)\cosh(x)}{\sinh^{\frac{a-n}{a}}(x)} + \frac an \frac{\mathcal L_{n}'(a;x)\sinh(x)}{\sinh^{\frac{a-n}{a}}(x)},\\
        &=\frac{\mathrm d}{\mathrm d x} \left[ n^{-1}\mathcal L_n(a;x) a\sinh^{\frac na}(x) \right] = \frac{\mathrm d}{\mathrm d x} \left[n^{-1}a\sinh^{\frac{a+1}{a}}(x) D_{n-1}(x) \right].
    \end{align*}
Then, if $$f(x) \coloneqq a\sinh^{\frac{a+1}{a}}(x),$$ the sequence $D_n(x)$ satisfies the relationship

    \begin{align}
        nD_{n}(x) = \frac{\mathrm d}{\mathrm d x} [f(x) D_{n-1}(x)]. \label{eq:recursion}
    \end{align}

Sequences $\{b_n(x)\}_{n\in \N}$ satisfying recursion \eqref{eq:recursion} and $b_0(x) \equiv 1$ are studied in  \cite{dominici2003nested} where they are called ``Nested derivatives of $f$". The sequence $\{D_n\}_{n\in\N}$ can be seen as a particular case as well as some other examples in the literature, particularly the elephant polynomials in \cite{guerin2025elephantpolynomials}.
    
For $x\in \R_+$, let $G_a(x,z)$ be defined as the generating function of the sequence $\left\{D_n(x)\right\}_{n\ge 0}$, i.e.
\begin{equation*}
    G_a(x,z) \coloneqq \sum_{n\ge0} D_n(x)z^n.
\end{equation*}
Then recursion \eqref{eq:recursion}, leads to differential equation for $G_a(x,z)$ in a small enough neighborhood of $0$,

\begin{align}
    \partial_x \left[ f(x) G_a(x,z) \right] &= \sum_{n\ge0}(n+1) D_{n+1}(x) z^n = \sum_{n\ge1} D_n(x) n z^{n-1} = \partial_z[G_a(x,z)]. \label{eq:diff_eq_G}
\end{align}

Equation \eqref{eq:diff_eq_G} is a first order transport equation which can be rewritten as
\begin{equation}
    \partial_z[G_a(x,z)] -  f(x)\partial_x[G_a(x,z)] = \partial_x[f(x)]G_a(x,z). \label{eq:transport}
\end{equation}
Let $x,z$ be parameterized by two real parameters $s,t$ such that $s$ satisfies
\begin{align}
    \partial_{s}[z(s,t)] &= 1, \label{eq:coordinate1}\\ \partial_s[x(s,t)] &= - f(x(s,t)), \label{eq:coordinate2}
\end{align}
and $t$ will be specified later. From now on, $G_a$ will denote $G_a(x(s,t), z(s,t))$ and $f$ will denote $f(x(s,t))$.
The condition for $s$ imply, by the chain rule and \cref{eq:transport}
\begin{align*}
    \partial_{s}[G_a] = \partial_z[G_a] - f\partial_x [G_a] = \partial_x[f]G_a(x,z). 
\end{align*}
\Cref{eq:coordinate2} allows one to obtain the simpler ODE:
\begin{align*}
    \frac{\partial_s G_a}{G_a} = \partial_x[f] &= -\frac{\partial_s[f]}{f},
    \intertext{so that,}
    \partial_s \left[ \log ( G_a f ) \right] &= 0.
\end{align*}
With this parametrization, $G_a f$ is constant with respect to $s$, but it depends on $t$. Solving \cref{eq:coordinate1,eq:coordinate2} yields
\begin{align*}
    z(s,t) &= s + c_1(t)\\
    \partial_s \left[-\int^{x(s,t)} \frac{\d u}{f(u)} \right] &= -\frac{\partial_s x(s,t)}{f(x(s,t))} = 1,
    \intertext{so that,}
    -\int^x \frac{\d u}{f(u)} &= s + c_2(t).
\end{align*}
It follows that
\begin{align*}
    z + \int^x \frac{\d u}{f(u)} = c_1(t) - c_2(t)
\end{align*}
does not depend on $s$, but only on $t$. In particular $t$ can be choosen to be $t = z + \int^x \frac{\d u}{f(u)}$. This remains a valid change of coordinates since $f(x)$ is positive for $x>0$. Therefore, this means that for some function $g$
\begin{align*}
    f(x)G_a(x,z) &= g(t) = g\left( z + h(x) \right),
    \intertext{where $h$ is the invertible function}
    h(x) &\coloneqq \frac1a\int^x \frac{du}{\sinh^{\frac{a+1}{a}}(u)} = \frac1a \int^x \frac{\d u}{f(u)}.
\end{align*}
To find such $g$, fix the initial condition $G_a(x,0) = D_0(x)$ and
\begin{align*}
    G_a(x,z) &= \frac{\sinh^{\frac{a+1}{a}}( h^{-1}( z + h(x) ) D_0\left( h^{-1}( z + h(x) ) \right)  }{ \sinh^{\frac{a+1}{a}}(x )}.
\end{align*}
Substituting $D_0=\mathcal L_1(a;x)/\sinh(x)$, yields an explicit expression for $G_a(x,z)$

 \begin{equation*}
     G_a(x,z) = \frac{ [\mathcal L_1 \times \sinh^{\frac1a}] \circ h^{-1}(z+h(x)) }{ \sinh^{\frac{a+1}{a}}(x)}.
 \end{equation*}

From this point, suppose $a<0$. Since $1/\sinh^{\frac{a+1}a}(u)$ is integrable in $[0,c]$ for any $c<\infty$, the integration limits in $h$ can be choosen to be $0$ and $x$. The case $a>0$ is analogous with the choice $x$ and $\infty$. Let $k$ be defined as in \cref{eq:definition_k}, $k(t) = -\frac1a\int_0^t s^{-\frac{a+1}{a}}(1+s^2)^{-\frac12} \d s$. Evaluating $h$ at $\arcsinh(x)$ shows that

    \begin{equation*}
        h(\arcsinh(x)) = \frac1{a}\int_0^{\arcsinh(x)} \sinh^{-\frac{a+1}{a}}(s) \d s + C = \frac1{a}\int_0^{x} \frac{s^{-\frac{a+1}{a}}}{\sqrt{s^2 +1 }}\d s + C = C - k(x).
    \end{equation*}

So $h(x) = C - k(\sinh(x))$ and $h^{-1}(x) = \arcsinh( k^{-1}( C - x) )$. This allows one to write
 \begin{align*}
 G_a(x,z) &= \frac{ [\mathcal L_1 \times \sinh^{\frac1a}] \circ \arcsinh[k^{-1}(C - z - h(x))] }{           \sinh^{\frac{a+1}{a}}(x)} \\
 &= \frac{ \mathcal L_1[\arcsinh(k^{-1}(C-z-h(x)))]  [k^{-1}(C-z-h(x))]^\frac1a }{ \sinh^{\frac{a+1}{a}}(x)}.
  \end{align*}
        Then, since $\mathcal L_1(x) = qe^{-x} + (1-q)e^{x} = -2q\sinh(x) + e^{x}$ and $\arcsinh(x) = \ln (x + \sqrt{1 + x^2})$, 
 \begin{equation}
 G_a(x,z) = \sinh^{- \frac{a+1}{a}}(x) A\left( k^{-1}( k(\sinh(x)) - z + C ) \right), \label{eq:expression_for_G}
 \end{equation}
with
\begin{equation*}
  A(t)= \left( -2qt + t +\sqrt{1+t^2} \right)t^\frac1a. \notag
 \end{equation*} 
    Notice that
    \begin{align*}
        G_a(x,z) &= \sum_{n\ge 0} \frac{\mathcal L_{n+1}(a;x)}{\sinh^{\frac{a-n}{a}}(x)} z^n = \sum_{n\ge 0} \frac{\mathcal L_{n+1}(a;x)}{\sinh(x)} \left(z\sinh^{\frac1a}(x)\right)^n = \frac{L_a(x,z\sinh^{\frac1a}(x))}{\sinh(x)}.
    \end{align*}
    So, the substitution $w = z\sinh^\frac1a(x)$ in \eqref{eq:expression_for_G} leads to
    \begin{equation*}
        L_a(x,w) = \sinh^{-\frac{1}a} (x)A\left[ k^{-1}\left( k\left(\sinh(x)\right) - w\sinh^{-\frac1a}(x) + C \right) \right].
    \end{equation*} 
    Finally, since $L_a(x,0) = \L{1}$, we choose $C=0$.
    \end{proof}

The form of $L_{-1}(x,z)$ allows one to conclude \Cref{analicity} and \Cref{result} for the specific case $a=-1$, since in this case $k(t) = \arcsinh(t)$ so $k^{-1}(t) = \sinh(t)$, and 
\begin{equation}
    L_{-1}(x,z) = \sinh(x)\left( 1 - 2q + \mathrm{cotanh}( x + z\sinh(x)) \right), \label{eq:generatrice-1}
\end{equation}
which is holomorphic on $z$ in $\C\setminus \mathbb Z \pi i$ with simple poles at the integer multiples of $\pi$. Its asymptotic expansion around 0 together with \Cref{flajolet} introduced below implies 
\begin{equation}
    \mathcal L_n(-1;x) \sim \left( \frac{\sinh(x)}{x} \right)^n, \qquad n \to \infty.\label{eq:minus_one}
\end{equation}
This coincides with \cref{eq:asymptotic_laplace}. Interestingly, the asymptotic Laplace transform coincides with the Laplace transform of the sum of $n$ i.i.d. random variables with uniform distribution in $[-1,1]$. Notice also from \eqref{eq:generatrice-1} that for $a=-1$, the parameter $q$ affects only the first coefficient in the expansion of $L_{-a(x,z)}$, i.e. only the Laplace transform of the first step. This is due to the fact that the second step of the walk is always in the opposite direction of the first one, and since the next step chooses randomly between all of the past ones, further steps are no longer affected by the initial one.

For the rest of the values of $a$, the proof of \Cref{analicity} will be divided in two cases: the first one is when $a\in (-1/2,0)$ or $a\in(0,1)$ which will be shown to be equivalent; the second one is when $a\in (-1,-1/2)$ which uses a different set of tools.

Before proceeding to the proofs, it is convenient to state more precisely the result that will be used to estimate the coefficients of $L_a(x,z)$.

\begin{proposition}(Flajolet - Odlyzko \cite{flajolet1990singularity}) \label{flajolet} 
Assume that, with the sole exception of the singularity $z=1$, $F(z)$ is analytic in the domain $\Delta(\epsilon,\theta) \coloneqq \{z : \abs{z} \le 1 + \epsilon,  \abs{\mathrm{Arg}(z-1)} \ge \theta \}$ where $\epsilon > 0$ and $\pi/2 <\theta < \pi$. Assume further that as $z$ tends to 1 in $\Delta$,
     \begin{equation*}
         F(z) \sim G(z),
     \end{equation*}
     Then, as $n\to\infty$, the nth Taylor coefficient of $F(z)$ satisfies 
     \begin{equation}
         [z^n]F \sim [z^n]G
     \end{equation}
\end{proposition}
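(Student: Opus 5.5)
The statement is only meaningful when $G$ belongs to the standard scale, so I will read it with $G(z)=(1-z)^{\alpha}$, $\alpha\notin\Z_{\ge0}$. This is the form used throughout the paper, and finite sums of such terms, or terms with logarithmic factors, are handled the same way. The plan is to split $F=G+H$ with $H(z)=o(\abs{1-z}^{\Re\alpha})$ as $z\to1$ in $\Delta$, and to prove two facts. First, $[z^n]G\sim n^{-\alpha-1}/\Gamma(-\alpha)$. Second, $[z^n]H=o(n^{-\Re\alpha-1})$. Since $\alpha\notin\Z_{\ge0}$, we have $1/\Gamma(-\alpha)\neq0$, so the second fact gives $[z^n]H=o([z^n]G)$, which is the claim.

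For the first fact, I start from Cauchy's formula $[z^n]G=\frac1{2\pi i}\oint (1-z)^\alpha z^{-n-1}\,\d z$. I then deform the contour to a Hankel contour wrapping the cut $[1,\infty)$ and substitute $z=1+t/n$. The integrand becomes $(-t/n)^\alpha(1+t/n)^{-n-1}\to(-t/n)^\alpha e^{-t}$, and dominated convergence on the Hankel contour (with exponential decay away from $t=0$) gives
$$[z^n]G = \frac{n^{-\alpha-1}}{2\pi i}\int_{\mathcal H}(-t)^\alpha e^{-t}\,\d t\,(1+o(1)) = \frac{n^{-\alpha-1}}{\Gamma(-\alpha)}(1+o(1)),$$
the last equality being Hankel's integral representation of $1/\Gamma$.

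For the second fact I use the $\Delta$-contour $\gamma=\gamma_1\cup\gamma_2\cup\gamma_3\cup\gamma_4$, with pieces chosen as follows.
\begin{itemize}
\item $\gamma_1$ is the circle $\abs{z-1}=1/n$, excluding the part not in $\Delta$.
\item $\gamma_2$ and $\gamma_4$ are the two segments of the rays $\abs{\mathrm{Arg}(z-1)}=\theta$ running from $\abs{z-1}=1/n$ out to the circle $\abs z=1+\eta$, where $0<\eta<\epsilon$.
\item $\gamma_3$ is the large arc of $\abs z=1+\eta$.
\end{itemize}
Since $F$ is analytic in $\Delta$ away from $1$, Cauchy's formula holds on $\gamma$. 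The pieces are estimated as follows.
\begin{itemize}
\item On $\gamma_3$, $H$ is bounded, so the contribution is $O((1+\eta)^{-n})$, which is exponentially small.
\item On $\gamma_1$, $\abs{z^{-n-1}}\le(1-1/n)^{-n-1}=O(1)$ and the length is $O(1/n)$. Given $\delta>0$, for large $n$ we have $\abs H\le\delta n^{-\Re\alpha}$ on $\gamma_1$, so the contribution is $O(\delta n^{-\Re\alpha-1})$.
\item On the rays, write $z=1+e^{\pm i\theta}t/n$. Because $\theta>\pi/2$, we get $\abs{z}^{-n}\le e^{-ct}$ with $c=-\cos\theta>0$. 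Split at $\abs{z-1}=r_\delta$, where $r_\delta$ is chosen so that $\abs H\le\delta\abs{1-z}^{\Re\alpha}$ for $\abs{1-z}<r_\delta$. The inner part is bounded by $\delta n^{-\Re\alpha-1}\int_1^\infty t^{\Re\alpha}e^{-ct}\d t$. The outer part, where $H$ is bounded, is $O(e^{-c'n})$.
\end{itemize}
Letting $\delta\to0$ gives $[z^n]H=o(n^{-\Re\alpha-1})$.

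The main obstacle is the ray estimate. The uniform bound $\abs{1+e^{i\theta}t/n}^{-n}\le e^{-ct}$ must hold for all $t\le n\,\cdot$const, not just for bounded $t$. I would prove it from $\log\abs{1+w}\ge \Re w-\abs w^2/2$ for small $\abs w$, and for larger $\abs w$ along the ray I would use that $\abs{1+e^{i\theta}s}$ stays bounded below by some $1+c''\min(s,1)$ (this needs $\theta>\pi/2$). The only other care needed is the bookkeeping of the $o$-term near $1$, which is why $\gamma_1$ is taken at scale exactly $1/n$.
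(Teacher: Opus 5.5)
The paper gives no proof here; it only cites Flajolet--Odlyzko. Your plan is exactly their argument: a Hankel contour for $(1-z)^\alpha$, then a $\Delta$-contour with a circle of radius $1/n$ to turn $H=o(\abs{1-z}^{\Re\alpha})$ into $o(n^{-\Re\alpha-1})$. You are also right that $G$ must be restricted to the standard scale. However, the ray estimate, which you correctly identify as the crux, is wrong as written. For $z=1+e^{\pm i\theta}s$ one has $\abs{z}^2=1+2s\cos\theta+s^2$, so with $s=t/n$ small, $\abs{z}^{-n}\approx e^{-t\cos\theta}$. For $\theta\in(\pi/2,\pi)$ this is $e^{+ct}$ with $c=-\cos\theta$, not $e^{-ct}$. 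The rays run into the open unit disk, and $\abs{z}$ drops to $\sin\theta<1$ at $s=-\cos\theta$. Your bound on $\gamma_2,\gamma_4$ is therefore only $O((\sin\theta)^{-n})$, which is exponentially large. Likewise, $\abs{1+e^{i\theta}s}\ge 1+c''\min(s,1)$ fails for $0<s<-2\cos\theta$ when $\cos\theta<0$. It holds exactly when $\cos\theta>0$, so your parenthetical ``this needs $\theta>\pi/2$'' has it backwards.

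This cannot be repaired under the hypothesis as printed, because the angle range in the statement is a misprint. With $\theta\in(\pi/2,\pi)$, the set $\Delta(\epsilon,\theta)$ is a cone at $1$ opening towards the negative reals. It misses part of the unit disk, and the claim is false. Take $\abs{w}<1$ with $w\notin\Delta(\epsilon,\theta)$; any $w=1+re^{i\psi}$ with $\pi/2<\psi<\theta$ and $0<r<-2\cos\psi$ works. Then $F(z)=(1-z)^{-1}+(1-z/w)^{-1}$ satisfies every hypothesis with $G=(1-z)^{-1}$, yet $[z^n]F=1+w^{-n}$ is not asymptotic to $[z^n]G=1$. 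You need to state the correct Flajolet--Odlyzko hypothesis explicitly: $0<\theta<\pi/2$, so that $\Delta$ contains the closed unit disk minus $\{1\}$. Then $\abs{1+e^{i\theta}s}^2\ge 1+2s\cos\theta$. Concavity of $s\mapsto\log(1+2s\cos\theta)$ on $[0,S]$ then gives $\abs{z}^{-n}\le e^{-ct}$ for some $c>0$, uniformly for $t\le Sn$. With that change, your estimates on $\gamma_1$ and $\gamma_3$, the split at $r_\delta$, and the Hankel computation all go through.
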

More details about the proposition, and the method in general, can be found in \cite{flajolet1990singularity}.

To show how the analyticity of $A(k^{-1}(z))$ depends on the functions involved, let $a \in (-1,1)\setminus \{0\},x\in \R_+$ be fixed, from \Cref{form_solution}.  The function $A\colon \C\cup\{\infty\} \to \C\cup\{\infty\}$ has branch points at $0,\infty, i$ and $-i$. Branch cuts can be chosen so that $A$ is analytic in $\C \setminus \left( \R_-\cup C(-i,i)  \right)$, where $C(-i,i)$ is an analytic arc joining $-i$ and $i$ to $0$. In particular, $C(-i,i)$ can be chosen to lie outside a sector around $0$, $\Theta_0(r,\theta)=\left\{ z\in \C \colon \abs{z} < r, \abs{\arg(z)} > \theta \right\}$ with $r>0, \theta \in(\pi,2\pi)$. Then $A$ is analytic in the domain $D$ defined in \Cref{analicity}, and $A(k^{-1}(z))$ will be analytic in $D$ if $D \subset k(D)$, as seen in the diagram:

\begin{align*}
    D \subset k(D) \xrightarrow[\phantom{ooooooooooo}]{k^{-1}} D \xrightarrow[\phantom{ooooooooooo}]{A} A(D).
\end{align*}

This allows one to see that the proof of \Cref{analicity} consists essentially of investigating the analyticity of $k^{-1}$, which is the purpose of \Cref{plane_inversion,local_inversion,sc,sc-mapping}.

The inversion of $k$ for $a\in(0,1)$ and $a\in(-1/2,0)$ on the right half-plane is seen to be equivalent by a composition with the Möbius function $f(z) = 1/z$. Take $a_+ \in (0,1)$, and write
\begin{equation*}
    k_{a_+}(z) = \frac1{a_+} \int_z^\infty \frac{\d s}{s^{1+\frac1{a_+}}\sqrt{s^2+1}},
\end{equation*}
then, by the change of variable $t = 1/s$
\begin{align*}
    k_{a_+}\left( \frac1z\right) &=  \frac1{a_+}\int_0^z \frac{\d t}{t^{1 + \frac{-1-a_+}{a_+}}\sqrt{1+t^2}} = \frac1{a_+}\int_0^z \frac{\d t}{t^{1 + \frac1{a_-}}\sqrt{1+t^2}} = -\frac{a_-}{a_+}k_{a_-}(z), \label{eq:positive_negative}
\end{align*}
where $a_- = \frac{-a}{a+1} \in (-1/2,0)$, and $k_{a_-}(z) = -\frac1{a_-} \int_0^z \frac{t^{-1-\frac1{a_-}}\d t}{\sqrt{t^2+1}}$.
Since $\frac1z$ is holomorphic from the right half-plane to itself, the problem of inverting $k$ on $\rightplane$ is equivalent for $a \in (-1/2,0)$ and $a\in(0,1)$.

\subsection{Proof of \Cref{analicity} for the case in (-1/2,1)} \label{subsection_a-120}

\begin{proposition} \label{plane_inversion}
    For $a \in (-\frac12,0)\cup(0,1)$, the function $k: \C \to \rightplane$ given by \Cref{eq:definition_k} has an inverse that is analytic in $\rightplane$. 
\end{proposition}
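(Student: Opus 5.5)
The plan is to read the statement as follows: $k$ restricted to a suitable simply connected region $\Omega\subset\rightplane$ containing $\R_+$ is a conformal bijection onto $\rightplane$, and $k^{-1}\colon\rightplane\to\Omega$ is the analytic inverse extending the real inverse. By the Möbius reduction $k_{a_+}(1/z)=-\frac{a_-}{a_+}k_{a_-}(z)$ proved just before the statement, it suffices to treat one of the two ranges. I will work with $a\in(-\frac12,0)$ and write $b\coloneqq -1/a>2$. Then
\[
k(t)=b\int_0^t \frac{s^{b-1}}{\sqrt{1+s^2}}\,\d s ,\qquad k'(t)=\frac{b\,t^{b-1}}{\sqrt{1+s^2}}\Big|_{s=t},
\]
using the principal branches of $s^{b-1}$ and $\sqrt{1+s^2}$ in $\rightplane$. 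Both branches are analytic there, since $\pm i\notin\rightplane$. Hence $k$ is analytic on $\rightplane$ with $k'\neq0$, so $k$ is locally invertible everywhere in $\rightplane$.

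\textbf{Step 1 (local behaviour at the ends).} As $t\to0$ we have $k(t)=t^b(1+O(t^2))$, and as $t\to\infty$ we have $k(t)=\frac{b}{b-1}t^{b-1}(1+o(1))+c$ for some real constant $c$. The map $t\mapsto t^b$ sends the sector $|\arg t|<\pi/(2b)$ onto $\rightplane$, and $t\mapsto t^{b-1}$ does the same for the sector $|\arg t|<\pi/(2(b-1))$. Since $b>2$, both sectors lie strictly inside $|\arg t|<\pi/2$.

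\textbf{Step 2 (the boundary curves).} Starting at $0$, I will follow the level curve $\gamma_+$ on which $\Re k=0$ and $\Im k$ increases from $0$; it is tangent to the ray $\arg t=\pi/(2b)$. Equivalently, $\gamma_+$ is the trajectory of the ODE $w'(u)=1/k'(w)=\frac1b\,w^{1-b}\sqrt{1+w^2}$ for $u=iy$, $y>0$. I then show that $\gamma_+$ runs to $\infty$ while staying in the sector $0<\arg t<\pi/2$. This follows from an argument estimate: along $\gamma_+$,
\[
\arg\dot\gamma_+=\frac\pi2-(b-1)\arg t+\tfrac12\arg(1+t^2).
\]
When $0<\arg t<\pi/2$, the quantity $\arg(1+t^2)$ lies in $(0,2\arg t)$. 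This should force $\arg t$ to stay between $\pi/(2b)$ and $\pi/(2(b-1))$, up to controlled errors, so the curve can neither reach the imaginary axis nor the branch point $i$. Conjugation gives the mirror curve $\gamma_-$. The region $\Omega$ is then the domain bounded by $\gamma_+\cup\gamma_-$ that contains $\R_+$.

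\textbf{Step 3 (bijectivity).} On $\partial\Omega$ the map $k$ sends $\gamma_\pm$ monotonically onto $\pm i\R_+$, because $\Im k$ is strictly monotone along each curve by construction. Together with the asymptotics of Step 1 at $0$ and at $\infty$, $k$ maps $\partial\Omega$ homeomorphically onto $\partial\rightplane\cup\{\infty\}$, and $k$ is proper near $\infty$. The Darboux--Picard boundary correspondence theorem (equivalently, the argument principle applied on truncated domains $\Omega\cap\{|t|<R\}$) then shows that $k\colon\Omega\to\rightplane$ is a bijection. Consequently $k^{-1}$ is analytic on $\rightplane$ and agrees with the real inverse on $\R_+$.

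The main obstacle is Step 2: proving that the curve $\Re k=0$ issued from $0$ escapes to $\infty$ inside the sector $0<\arg t<\pi/2$, without turning back and without hitting $\pm i$. This is exactly where $b>2$, that is $a>-\frac12$, has to enter. If the argument estimate turns out too delicate, I would instead try to prove directly that $\Re k>0$ on the whole ray $\arg t=\theta$ for each $\theta\le\pi/(2(b-1))$, and then define $\Omega$ through these rays.
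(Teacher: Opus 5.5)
Your proposal is correct, and it takes a genuinely different route from the paper. The paper never identifies the preimage $k^{-1}(\rightplane)$. Instead it continues $k^{-1}$ along every vertical line $x+iy$, $x>0$, by integrating $f_x'(y)=ig(f_x(y))$ with $g=1/k'$. It then has to show that no such trajectory leaves the right half-plane, which is why it studies where $\Re(ig)$ and $\Im(ig)$ change sign over the whole half-plane; single-valuedness comes from continuation along paths in the simply connected $\rightplane$. You control only one trajectory, the one issued from $0$, namely $k^{-1}(i\mathbb{R}_+)$, and let boundary correspondence do the rest. This is more economical and yields more: an explicit region $\Omega=k^{-1}(\rightplane)$ on which $k$ is injective. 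The price is Step~3. There Darboux--Picard must be applied after compactifying (e.g.\ by a M\"obius map sending $\rightplane$ to the disc), since $\Omega$ is unbounded and $k$ is only continuous at $0$. That in turn needs $\abs{k(t)}\to\infty$ uniformly as $t\to\infty$ in $\overline{\Omega}$.

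The step you flag as the obstacle closes exactly, with no error terms.
\begin{itemize}
\item At $\arg t=\pi/(2b)$ your formula gives $\arg\dot\gamma_+-\arg t=\tfrac12\arg(1+t^2)\in(0,\arg t)$. At $\arg t=\pi/(2(b-1))$ it gives $\tfrac12\arg(1+t^2)-\arg t\in(-\arg t,0)$. So the closed sector between these rays is trapping.
\item More directly, write $k(\rho e^{i\phi})=b\,e^{ib\phi}\int_0^\rho\sigma^{b-1}(1+\sigma^2e^{2i\phi})^{-1/2}\,\mathrm{d}\sigma$. For $0<\phi<\pi/2$ the integrand has argument in $(-\phi,0)$.
\item Consequently the continuous argument of $k$ on that ray lies in $((b-1)\phi,b\phi)$, and $\abs{k(\rho e^{i\phi})}\ge k(\rho)/\sqrt2$.
\item The same computation gives $\arg\bigl(tk'(t)/k(t)\bigr)\in(-\phi,\phi)$, i.e.\ $\partial_\phi\arg k(\rho e^{i\phi})=\Re\bigl(tk'(t)/k(t)\bigr)>0$.
\end{itemize}
Hence $\gamma_+$ is the polar graph of a function $\phi^*(\rho)\in\bigl(\pi/(2b),\pi/(2(b-1))\bigr)$. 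It starts strictly inside the sector and goes to infinity. Moreover, $\arg k$ sweeps $(-\pi/2,\pi/2)$ monotonically on each arc $\Omega\cap\{\abs{t}=\rho\}$, which makes the truncated argument-principle count in Step~3 immediate.

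Drop your fallback. For $\pi/(2b)<\theta\le\pi/(2(b-1))$, near $0$ on the ray $\arg t=\theta$ one has $\arg k\approx b\theta>\pi/2$, so $\Re k<0$ there.
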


\begin{proof} Since $k_{a_+}\left( \frac1z\right) = -\frac{a_-}{a_+}k_{a_-}(z)$, it is possible to take $a\in(-1/2,0)$ without loss of generality.

By the chain rule:
    \begin{align}
        (k^{-1})'(k(z)) &= \frac{1}{k'(z)} = -a z^{1+\frac1a}\sqrt{z^2+1}
         \intertext{so, $k^{-1}$ must satisfy the differential equation}
         (k^{-1})'(z)    &= -a(k^{-1}(z))^{1+\frac1a}\sqrt{(k^{-1}(z))^2 + 1} \eqqcolon g(k^{-1}(z)), \label{eq:diffeq}
         \intertext{with}
         g(u) &= -a u^{1+\frac1a}\sqrt{ u^2 + 1 } \label{eq:def_g}
    \end{align}
    Notice that $g$ is holomorphic in the right half-plane, so the local existence and uniqueness to \eqref{eq:diffeq} is guaranteed in $\rightplane$ (see Theorem 4.1 in \cite{teschl_complex_diffeqs}). Moreover, since $k:\R_+ \to \R_+$ is strictly increasing,  \eqref{eq:diffeq} admits an unique solution in a neighborhood of the real positive axis $\{x\in  \R \colon x > 0\}$. Fix $x > 0$, $I \subset \R$ an interval around 0, and let $f_x: I \to \C$ be given by $f_x(y) = k^{-1} (x + iy)$ for. Then $f_x$ satisfies the differential equation
    \begin{equation}
        f_x'(y) = i g(f_x(y)), \label{eq:real_diffeq}
    \end{equation}
    with $g$ as defined in \eqref{eq:def_g}. 
    
    A first-order Taylor expansion of $ig( u + iv )$ around $u > 0$ gives
    \begin{align*}
        ig(u+iv) &= ig(u) - v g'(u) + o(\abs{v}).
    \end{align*}
    Since $g'(u) < 0$ when $u>0$, we have that for $v$ close to 0, $\Im(ig(u+iv))>0$ and $\sgn(\Re(ig(u+iv))) = \sgn(v)$. For every fixed $u>0$, define $v_i(x), v_r(x)$ as
    \begin{align*}
        v_i(u) &\coloneqq \inf \left\{ v > 0 \vcentcolon \Im( ig(u + iv) ) \le 0 \right\} \in \R_+ \cup \{ + \infty\},\\
        v_r(u) &\coloneqq \inf\left\{  v > 0 \vcentcolon \Re( ig(u + iv) ) \le 0 \right\} \in \R_+ \cup \{ + \infty\}.
    \end{align*}
    
    For $n \in \mathbb N$, let $\mathcal D_n$ be defined as 
    
    \begin{equation*}
        \mathcal D_n \coloneqq \left\{ x + i y \vcentcolon x > \frac1n, \abs{y} < v_i(x) \right\},
    \end{equation*}
    and $\mathcal D \coloneqq \bigcup_j \mathcal D_j$, so $\mathcal D = \left\{ x + i y \vcentcolon \abs{y} < v_i(x) \right\}$. Since for every $z\in \mathcal D$, it exists $\mathcal D_n$ such that $z\in \mathcal D_n$, the result of existence and uniqueness of a solution to \eqref{eq:real_diffeq} until the exit time can be extended to $\mathcal D$. Proving that an extension of $f_x(y)$ from the real positive axis along the vector field $ig(x + iy)$ never reaches the boundary $\partial \mathcal D$, then implies the desired global existence and uniqueness. 
    
    Since $ig(z)$ is holomorphic on every $\mathcal D_n$, its derivative is locally bounded, therefore $ig(z)$ is locally Lipschitz and the Picard-Lindelöf Theorem yields the existence and uniqueness of a solution to \eqref{eq:real_diffeq} for every initial condition $z_0 \in \mathcal D_n$ that can be continued  until the exit time of the set $\mathcal D_n$. Furthermore, if there exists $x_0$ such that $v_i(x)=v_r(x)=\infty$  for all $x>x_0$, then $f_x(y)$ admits an analytic continuation to all $y\in\mathbb R_+$. Indeed, for $x>x_0$, the inequalities
$$
\Im\left(i g(x+iy) \right)>0,
\qquad
\Re\left(i g(x+iy) \right)>0
$$
show that the vector field $i g$ points strictly into $\mathcal D_n$ along the relevant portions of the boundary. Consequently, every integral curve starting at a point $x+iy$ with $x>x_0$ remains in $\mathcal D_n$ for all forward times and therefore cannot intersect $\partial\mathcal D_n$. It follows that the analytic continuation can be extended for all $y\ge 0$.  \Cref{fig:vector_fields} shows the vector field of $ig(x+iy)$ and the curve $v_i(x)$ for different values of $a$.

\begin{figure}[!ht]
    \centering
        \def\mathdefault#1{#1}\everymath=\expandafter{\the\everymath\displaystyle}
    \begin{subfigure}[t]{0.3\textwidth}
        \centering
        \scalebox{0.95}{\input{figures/vector_field_1_4.pgf}} 
        \caption{$a=-1/4$}\label{subfig:vector_field_14}
    \end{subfigure} \hfill
    \begin{subfigure}[t]{0.3\textwidth}
        \centering
        \scalebox{0.95}{\input{figures/vector_field_10_31.pgf}}
        \caption{$a=-10/31$}\label{subfig:vector_field_1031}
    \end{subfigure}\hfill
    \begin{subfigure}[t]{0.3\textwidth}
        \centering
        \scalebox{0.95}{\input{figures/vector_field_1_3.pgf}}
        \caption{$a=-1/3$}\label{subfig:vector_field_13}
    \end{subfigure}
    \caption{Vector fields of $ig(x+iy)$ and the curve $v_i(x)$ for different values of $a$. In \Cref{subfig:vector_field_13} and \Cref{subfig:vector_field_1031} the value of $v_i(x)$ is eventually infinite.}
    \label{fig:vector_fields}
\end{figure}

    By continuity of $ig(z)$ in $\rightplane$, finding $v_i(x)$ is equivalent to finding the smallest $y$ such that the imaginary part of $ig(x+iy)$ vanishes, and similarly for $v_r(x)$. This happens if either $\Re(ig(z)) = 0$ or $\Im(ig(z))=0$, then $-g^2(z) \in \R$, with $-g^2(z) < 0$ if $\Re(ig(z)) = 0$ and $-g^2(z)>0$ if $\Im(ig(z)) = 0$. The trivial points such that $-g^2(z) = 0$ are excluded since they're not in $\rightplane$.
    
    If $z = re^{it}$, the condition $-g^2(z) \in \R$ is equivalent to
    \begin{align}
        r^2\sin( 2t( 2+\frac1a ) ) + \sin( 2t(1 + \frac1a) ) &=0, \label{eq:condtion_both}
        \intertext{while the condition $-g^2(z) < 0$ and $-g^2(z)>0$ are equivalent, respectively, to}
        r^2\cos( 2t( 2+\frac1a ) ) + \cos( 2t(1 + \frac1a) ) &<0, \label{eq:condition_real_part}\\
        r^2\cos( 2t( 2+\frac1a ) ) + \cos( 2t(1 + \frac1a) ) &>0. \label{eq:condition_imag_part}
    \end{align}
    Condition \eqref{eq:condtion_both} admits a degenerate solution when $\sin(2t(2+1/a)) = \sin(2t(1+1/a)) = 0$, and this happens if and only if there exist $n,m\in \mathbb Z$ such that $2t(2+1/a) = n\pi, 2t(1+1/a) = m\pi$. From these conditions it follows
     \begin{align*}
         t &= n\left(\frac{a}{2a+1}  \right)\frac\pi2,\\
         m\pi &= 2 n\left(\frac{a}{2a+1}  \right)\frac\pi2 (1+1/a).
     \end{align*}
     Then $\frac mn = \left( \frac{a+1}{2a+1} \right)$. Since $\frac{a+1}{2a+1} > 1$, it is possible to take $m,n\in \mathbb N$, $m > n$, and writing $a = \frac{n-m}{2m-n}$ yields that condition \cref{eq:condtion_both} only holds degenerately for particular values of $a$. When there is no degenerate solution, it is possible to write $$ r^2=\frac{\sin\left(-2t(1+1/a)\right)}{\sin\left(2t(2+1/a)\right)}.$$ A real value of $r$ exists if and only if  $$\frac{\sin\left(-2t(1+1/a)\right)}{\sin\left(2t(2+1/a)\right)}>0.$$ 
     Equivalently, there exist $m,n\in\mathbb Z$ such that $t$ belongs to one of the sets
     \begin{align}
         \left( - \frac\pi2(2n+1)\frac{a+1}{a}, -\frac\pi2(2n+2)\frac{a+1}{a} \right) \cap \left( \frac\pi2 (2m+2)\frac{a}{2a+1}, \frac\pi2 (2m+1)\frac{a}{2a+1} \right), \\
          \left( - \frac\pi2(2n)\frac{a+1}{a}, -\frac\pi2(2n+1)\frac{a+1}{a} \right) \cap \left( \frac\pi2(2m+1)\frac{a}{2a+1}, \frac\pi2 (2m)\frac{a}{2a+1} \right).
     \end{align}
     For $a\in (-1/2,0)$, there is no $t \in \left( \frac\pi2(\frac a{a+1}), \frac\pi2\left( \frac{-a}{a+1}\right)\right)$ that satisfies any of the previous conditions. This means that inside the cone $\{\rho e^{i\theta} \colon \rho > 0, \theta \in \left( \frac\pi2(\frac a{a+1}), \frac\pi2\left( \frac{-a}{a+1}\right)\right)\}$, the real and imaginary parts of the vector field $ig(z)$ are always positive. 
     
     When \eqref{eq:condtion_both} is satisfied, $r^2$ can be written as $r^2 = \sin(-2t(1+1/a))/\sin(2t(2+1/a))$, and with this the left-hand-side of both \eqref{eq:condition_imag_part} and \eqref{eq:condition_real_part} is equal to $\sin(2t)/\sin(2t(2+1/a))$. Condition \eqref{eq:condition_imag_part} is satisfied whenever there exist $m,n\in \mathbb Z$ such that
     \begin{align}
         t \in \left( n\pi, n\pi + \frac\pi2 \right) \cap \left( \frac{2m+1}{2}\pi \left( \frac{a}{2a+1} \right), m\pi \left( \frac{a}{2a+1} \right) \right),
         \intertext{or}
         t \in \left( n\pi + \frac\pi2, (n+1)\pi \right) \cap \left( m\pi \left( \frac{a}{2a+1} \right), \frac{2m-1}{2}\pi \left( \frac{a}{2a+1} \right) \right).
     \end{align}
     Condition \eqref{eq:condition_real_part} is satisfied whenever
     \begin{align}
          t \in \left( n\pi + \frac\pi2, (n+1)\pi \right) \cap \left( \frac{2m+1}{2}\pi \left( \frac{a}{2a+1} \right), m\pi \left( \frac{a}{2a+1} \right) \right),
          \intertext{or}
         t \in \left( n\pi, n\pi + \frac\pi2 \right) \cap \left( m\pi \left( \frac{a}{2a+1} \right), \frac{2m-1}{2}\pi \left( \frac{a}{2a+1} \right) \right),
     \end{align}
     Define $t_{\inf} \coloneqq \min\{ \frac\pi2, \frac{-a}{2a+1}\frac\pi2 \}$ and $t^{\sup} \coloneqq \min\left\{ \frac\pi2, \frac{-a}{a+1}\frac{\pi}{2} \right\}$. Former conditions and the restriction $t \in (0,\pi/2)$, imply that $x + i v_r(x)$ must satisfy:
    \begin{align}
        x + i v_r(x) &= \sqrt{ \frac{\sin\left(-2t \left( 1 + \frac1a \right) \right)}{\sin\left(2t \left( 2 + \frac1a \right) \right)} } \left( \cos t + i \sin t \right), \qquad t \in ( t_{\inf}, t^{\sup} ), \label{eq:parametrized_real}
        \intertext{and $x + iv_i(x)$}
        x + i v_i(x) &= \sqrt{ \frac{\sin\left(-2t \left( 1 + \frac1a \right) \right)}{\sin\left(2t \left( 2 + \frac1a \right) \right)} } \left( \cos t + i \sin t \right), \qquad t \in \left( \frac\pi2\left( \frac{-a}{a+1} \right), t_{\inf} \right). \label{eq:parametrized_imag}
    \end{align}
    
    If it happens that $t_{\inf} = t^{\sup}$, then $v_r(x) = \infty$ for all $x>0$. Also, the fact that $v_r(x)$ and $v_i(x)$ are divided by the line $\arg(z) = t^{\sup}$, implies that $v_r(x) \ge v_i(x)$, and so $\Re( ig(z) ) > 0$ inside of $\mathcal D$. Since $\frac{\pi}{2} \ge t^{\sup} \ge t_{\inf}$, then $\rightplane \subset \mathcal D$ if $t_{\inf} = \frac{\pi}{2}$.

    The function $r^2(t)$ has the following limits
    \begin{align*}
        \lim_{t\to \frac\pi2\left( \frac{-a}{a+1} \right)} r^2 &= 0,
         \qquad \text{and} \qquad
        \lim_{t \to t_{\inf}} r^2(t) = +\infty.
    \end{align*}
    However, since $\cos(\pi/2) = 0$, if $t_{\inf} = \pi/2$ it happens that the parameterized curve defined in \eqref{eq:parametrized_imag} touches the point $i$, and by continuity, it is bounded. This implies that there exists $x_{\max}\ge 0$ such that $v_i(x) < \infty$ for $0 < x \le x_{\max}$, and $v_i(x) = +\infty$ for $x > x_{\max}$. Then, the solution to \eqref{eq:real_diffeq} can be continued from $x>x_{\max}$ for every $y \in \R$. Since $r(t)$ is increasing between $\frac\pi2\left( \frac{-a}{a+1}\right)$, and $x_{\max}$, the horizontal tangent vectors to the vector field $ig(z)$ at $x+iv_i(x)$ point inside $\mathcal D$. Then, if $v_i(x) <0$, the solution to \eqref{eq:real_diffeq} can also be continued from $x$ for all $y\in \R$ (see \Cref{fig:vector_fields}).
    
    As a conclusion, for all $x>0$, the solution of \eqref{eq:real_diffeq} can be continued for all $y \in \R$. Then the global existence and uniqueness of a solution to \eqref{eq:real_diffeq} follows for every starting condition $x+iy$ with $x>0$. The local existence and uniqueness is guaranteed for \eqref{eq:eqdiff} with any starting condition on $\rightplane$, and since every solution to \eqref{eq:eqdiff} should satisfy \eqref{eq:real_diffeq}, the global uniqueness of the solution to \eqref{eq:real_diffeq} on $\rightplane$ implies that the solution to \eqref{eq:diffeq} should also be globally defined.
\end{proof}

\begin{remark}
When $a\in (-1,-1/2)$, the arguments used in the preceding proof do not work exactly the same way, particularly the behavior of $ig(z)$ around the positive real axis. Although some changes could be made to adapt the proof, the case $a\in (-1,-1/2)$ is easily seen to be a particular case of a special function whose analyticity domain is known.
\end{remark}

\begin{proposition} \label{local_inversion}     For $a \in (-\frac12,0)$, the function $k(z) = -\frac1{a}\int_0^z \frac{\d t}{t^{1+\frac1a}\sqrt{t^2 + 1}} \d t$ has an inverse that is analytic in a circular sector $\Theta(r_0,\theta_0)$ around 0 with $\theta_0 \ge \pi$.
\end{proposition}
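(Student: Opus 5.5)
The plan is to reduce $k$ near the origin to a pure power composed with a local biholomorphism, and then invert each factor separately. Write $b \coloneqq -1/a$. For $a\in(-\frac12,0)$ we have $b>2$, and
\[
k(z)=b\int_0^z \frac{t^{b-1}}{\sqrt{1+t^2}}\,\d t .
\]
Expand $(1+t^2)^{-1/2}=\sum_{j\ge0}c_jt^{2j}$ for $|t|<1$ and integrate term by term. This gives, for $z$ in the right half-plane near $0$ (principal branch of $z^b$),
\[
k(z)=z^b\,\varphi(z),\qquad \varphi(z)=\sum_{j\ge0}\frac{b\,c_j}{b+2j}\,z^{2j},
\]
where $\varphi$ is analytic in the unit disc and $\varphi(0)=1$. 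Choose $r_1>0$ with $\varphi\neq0$ on $|z|<r_1$ and set $\psi\coloneqq\varphi^{1/b}$, using the branch with $\psi(0)=1$. Then $\psi$ is analytic there and $k(z)=w(z)^b$ with $w(z)\coloneqq z\psi(z)$.

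Next, since $w(0)=0$ and $w'(0)=1$, the inverse function theorem gives discs $|z|<r_2$ and $|\zeta|<r_3$ such that $w$ maps the first biholomorphically onto a neighbourhood of $0$ containing the second. Hence $w^{-1}$ is analytic on $|\zeta|<r_3$. Now take $r_0\coloneqq r_3^{\,b}$ and consider the slit disc
\[
\Theta\coloneqq\{u\colon |u|<r_0,\ u\notin(-r_0,0]\}.
\]
On $\Theta$ the principal power $u\mapsto u^{1/b}$ is analytic and maps $\Theta$ into the sector $\{|\zeta|<r_3,\ |\arg\zeta|<\pi/b\}$. Since $b>2$, this sector has opening less than $\pi/2$ and lies inside $\rightplane$. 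We then define
\[
K(u)\coloneqq w^{-1}\bigl(u^{1/b}\bigr),\qquad u\in\Theta,
\]
which is analytic on $\Theta$ and satisfies $k(K(u))=\bigl(u^{1/b}\bigr)^b=u$. So $K$ is a right inverse of $k$ on the slit disc, i.e.\ on every sector around $0$ in the sense of the statement, so that $\theta_0$ may be taken as close to $\pi$ (equivalently, an opening as close to $2\pi$) as desired.

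The last step is to identify $K$ with the inverse $k^{-1}$ from \Cref{plane_inversion}, so that the local continuation is compatible with the global one. For real $u\in(0,r_0)$ both $K(u)$ and $k^{-1}(u)$ are positive reals with $k(\cdot)=u$, and $k$ is strictly monotone on $\R_+$, so they coincide there. By the identity theorem they then agree on $\Theta\cap\rightplane$, and $K$ furnishes the analytic extension of $k^{-1}$ across the imaginary axis into the sector.

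The main obstacle is making the branches consistent. One has to check that $z^b$ is taken with the same principal branch that defines $k$ on $\rightplane$, so that $k=w^b$ holds on $w^{-1}$ of the sector $|\arg\zeta|<\pi/b$ (true because this set contains a real segment and lies in $\rightplane$ for small radius). One also has to check that the resulting local inverse does not conflict with the global one on the overlap. Both points reduce to the fact that $u^{1/b}$ sends the slit disc into a thin sector around $\R_+$, which is where $b>2$, i.e.\ $a\in(-\frac12,0)$, is used.
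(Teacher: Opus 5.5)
Your proposal is correct and is essentially the paper's argument: your $w=z\varphi^{1/b}$ is the paper's $g(z)=z\,F(z)^{-a}$ with $F={}_2F_1[\tfrac12,-\tfrac1{2a};1-\tfrac1{2a};-z^2]$ (your $\varphi$ is exactly this series, and $1/b=-a$), and your $K(u)=w^{-1}(u^{1/b})$ is the paper's $k^{-1}(u)=g^{-1}(u^{-a})$. Your additional steps fill in points the paper leaves implicit or states loosely: you place the slit along $(-r_0,0]$, whereas the paper writes $[0,\infty)$, and yours is the slit compatible with analyticity on the positive axis and with the sector needed in \Cref{analicity}; you check branch consistency; and you identify $K$ with the global inverse of \Cref{plane_inversion} via monotonicity and the identity theorem.
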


\begin{proof}
Let $\abs{z}<1$, and denote with $b^{(n)}$ the ascending factorial $b^{(n)} =(b)(b+1)\cdots(b+n-1)$. By the binomial Theorem
\begin{align}
    k(z) = -\frac1a\int_0^z \frac{\d t}{t^{1+\frac1a}\sqrt{t^2 + 1}} \d t &= -\frac1a\int_0^z \frac{ \sum_{n \ge 0} \binom{-\frac12}{n} t^{2n} }{t^{1+\frac1a}} \d t = - \frac1a \sum_{n\ge0} \frac{(-1)^n (1/2)^{(n)}}{(2n-1/a)n!} z^{2n-\frac1a} \notag \\
    &= z^{-\frac1a}\sum_{n\ge0} \frac{(-1)^n(1/2)^{(n)}\left( -\frac{1}{2a}\right)^{(n)}}{\left( \frac{2a-1}{2a} \right)^{(n)}}(-z^2)^{n} = z^{-\frac1a} {}_2 F_1\left[ \frac12,-\frac1{2a} ; 1 - \frac1{2a}; -z^2 \right], \label{eq:hypergeometric}
\end{align}
where  the order of sum and integration can be switched since the integral converges for $\abs{z}<1$.
Denote \begin{align*}
    g(z) = z\left( {}_2 F_1\left[ \frac12,-\frac1{2a} ; 1 - \frac1{2a}; -z^2 \right] \right)^{-a}.
\end{align*}
Since $z^{\frac1a}k(z)$ is holomorphic in the unitary disc and $\left.z^{\frac1a}k(z)\right|_{z=0} = 1$, then $g(z)$ is holomorphic in a neighborhood of $0$. Furthermore
\begin{align*}
    \left.\partial_z g(z)\right|_{z=0} &= \left( {}_2 F_1\left[ \frac12,-\frac1{2a} ; 1 - \frac1{2a}; 0 \right] \right)^{-a} = 1.
\end{align*}
So $g(z)$ admits an holomorphic inverse in its range which is a domain round zero, so there exists a disc around 0, say of radius $\delta$, $\mathbb D_\delta$ where $g^{-1}(z)$ is holomorphic. Since $k(z) = \left(g(z)\right)^{-\frac1a}$ it follows that $k^{-1}(z) = g^{-1}( z^{-a} )$. Fixing a branch of logarithm, $z^{-a}$ is holomorphic in a slit disc of radius $\epsilon$, $\mathbb D_\epsilon\setminus  [0,\epsilon)$. Thus, $k^{-1}$ is holomorphic in the intersection of the domains: $\mathbb D_{\alpha}\setminus [0,\infty)$, where $\alpha = \min\left\{ \epsilon, \delta \right\}$. Choosing any $\theta_0 \in (\pi,2\pi)$ finishes the proof.
\end{proof}

\begin{corollary}
For $a \in (0,1)$, the function $k(z) = \frac1a\int_z^\infty \frac{\d s}{ s^{1+\frac1a}\sqrt{s^2 + 1} }$ has an inverse that is analytic in $\Theta(r_0, \theta_0)$ a circular sector around 0 with angle $\theta_0$ and radius $r_0$.
\end{corollary}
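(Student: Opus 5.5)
The plan is to reduce the statement to \Cref{local_inversion} through the inversion identity obtained just before \Cref{plane_inversion}. For $a_+=a\in(0,1)$ set $a_-=\frac{-a}{a+1}\in(-1/2,0)$. The change of variables $t=1/s$ gives
\begin{equation*}
k_{a_+}\left(\frac1z\right) = -\frac{a_-}{a_+}\,k_{a_-}(z) = \frac{1}{a+1}\,k_{a_-}(z),
\end{equation*}
where the constant $c:=-a_-/a_+=1/(a+1)$ is strictly positive. The identity holds for $z$ on the positive real axis. Since both sides are analytic in $z$ on a neighbourhood of $\R_+$, with the branches of $s^{-1-1/a}$ taken real there, the identity theorem extends it to the common domain of analyticity.

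Inverting this relation on the positive real axis gives the candidate formula
\begin{equation*}
k_{a_+}^{-1}(w) = \frac{1}{k_{a_-}^{-1}\big((a+1)\,w\big)},
\end{equation*}
which is a genuine identity for $w>0$ because both $k$'s are strictly monotone bijections of $\R_+$. By \Cref{local_inversion}, $k_{a_-}^{-1}$ is analytic in a sector $\Theta(r_0,\theta_0)$ around $0$ with $\theta_0\ge\pi$. The map $w\mapsto (a+1)w$ is a positive dilation, so it sends the sector $\Theta(r_0/(a+1),\theta_0)$ onto $\Theta(r_0,\theta_0)$ and preserves arguments. Hence the right-hand side is analytic on $\Theta(r_0/(a+1),\theta_0)$ wherever $k_{a_-}^{-1}$ does not vanish.

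The only point needing care, and the one I expect to be the main obstacle, is showing that $k_{a_-}^{-1}$ has no zeros in the punctured sector. For this I use the representation from the proof of \Cref{local_inversion}: $k_{a_-}^{-1}(u)=g^{-1}(u^{-a_-})$, where $g$ is a local biholomorphism at $0$ with $g(0)=0$ and $g'(0)=1$. Shrinking the radius if necessary, $g^{-1}$ is injective on its disc, so it vanishes only at $0$. Moreover $u^{-a_-}\neq 0$ for $u\neq 0$. Hence $k_{a_-}^{-1}$ is nonzero on the sector, and $1/k_{a_-}^{-1}$ is analytic there.

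Finally, the formula agrees with $k_{a_+}^{-1}$ on the positive reals inside the sector. By uniqueness of analytic continuation, it therefore defines the analytic inverse of $k_{a_+}$ on $\Theta(r_0/(a+1),\theta_0)$. As a consistency check, $k_{a_+}^{-1}(w)\to\infty$ as $w\to0$, which matches $k_{a_+}(t)\to0$ as $t\to\infty$.
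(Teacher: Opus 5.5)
Your proposal is correct and follows essentially the same route as the paper: both reduce to \Cref{local_inversion} via the identity $k_{a_+}(1/z)=-\frac{a_-}{a_+}k_{a_-}(z)$, then write $k^{-1}$ as the reciprocal of the local inverse, which does not vanish on a small punctured sector. You are slightly more careful than the paper in two places: you track the constant $1/(a+1)$, which gives the rescaled radius $r_0/(a+1)$, and you justify the non-vanishing through the representation $k_{a_-}^{-1}(u)=g^{-1}(u^{-a_-})$.
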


\begin{proof}
Let $b = -a/(a+1) \in (-1/2,0)$. By \cref{eq:positive_negative}, 
\begin{equation*}
     k\left( \frac1z \right) = \frac1a \int_0^z \frac{\d s}{s^{1+\frac1b}\sqrt{s^2 + 1} }.
\end{equation*}
Then \Cref{local_inversion} guarantees that $k(1/z)$ has an inverse, say $g$ that is analytic in a circular sector around $0$. Since $g$ is injective near 0, and $g(0)=0$, after taking a sector with a  small enough radius and removing the origin, $g$ does not vanish there. Writing $k^{-1}(z) = 1/g(z)$ yields that $k^{-1}(z)$ is holomorphic in this circular sector.
\end{proof}

\subsection{Proof of \Cref{analicity} for the case in (-1,-1/2)} \label{subsection_a-1-12}

When $a\in(-1,-1/2)$, $k$ happens to be a conformal application between $\rightplane$ and a polygon on $\C$.  The Riemann Mapping Theorem ensures the existence of a conformal transformation between any two simply connected domains of the complex plane $\C$ which are different to the plane itself. The Schwarz-Christoffel mappings are a class of such conformal applications between a half-plane and a simply connected area bounded by right sides. 

More precisely, let $\alpha_1,\dots,\alpha_n$ be positive real numbers and $w_1,\dots,w_n$ be points in the complex plane. The polygon $P$ is defined as the area surrounded by the lines joining the points $w_j$ with internal angles $\pi\alpha_j$. The points $w_j$ are called vertices. If $F$ maps the upper half-plane conformally into to $P$, the preimages of the $w_j$ by $F$ are the real numbers $z_1,\dots,z_n$, and we call them \textit{prevertices}. For particular examples of polygons see \Cref{fig:polygons}. The general Schwarz-Christoffel formula for a conformal mapping $F$ between the upper half-plane and the polygon $P$, $F:\C^+\to P$ is

\begin{equation*}
        F(z) = A + C \int^z \prod_{j=1}^{n-1}(s-z_j)^{\alpha_j-1} \d s,
\end{equation*}
for some complex constants $A,C$ and some $z_1<z_2<\cdots <z_n$ with $w_j = f(z_j)$ for $j=1,\dots,n-1$. An extensive review of the Schwarz-Christoffel maps can be found in \cite{driscoll2002schwarz}.

\begin{proposition}\label{sc}
    For $a\in(-1,-1/2]$, let $r = \abs{k(i)}$ and $(w_1,w_2) = (re^{-i\frac{\pi}{2a}}, re^{i\frac{\pi}{2a}})$. Let $\Gamma$ be the line formed by the segments $ S_1 = (w_1,0)$, $S_2 = (0,w_2)$ and the rays $R_1,R_2$, where $R_i$ has an end in $w_i$, is perpendicular to $S_i$ and extends to infinity in the direction of the positive real axis. Define $P$ as the region of the complex plane delimited by $\Gamma$ containing the positive real axis (see \Cref{fig:polygons}, also, for examples of this polygon depending on $a$, see \Cref{interactive_polygon}). There exist $A,C$ complex constants, $C \neq 0$, such that $\phi: \C^+ \to P$ defined as  $$ \phi(z) = A + C\int_0^z s^{-1-\frac1a}(s-1)^{-\frac12}(s+1)^{-\frac12} \d s$$ is a conformal application between the upper half-plane $\C^+$ and $P$ with $\phi(-1) = w_1, \phi(0) = 0, \phi(1)=w_2$.
\end{proposition}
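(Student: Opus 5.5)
The plan is to prove this through the boundary correspondence principle (the Darboux--Picard theorem) rather than by invoking a general Schwarz--Christoffel existence theorem. I will show directly that $\phi$ maps the real axis homeomorphically onto the curve $\Gamma$, viewed as a Jordan curve on the Riemann sphere passing through $\infty$. Then $\phi$, being holomorphic in $\C^+$, is a conformal bijection onto the component of $\hat\C\setminus\Gamma$ that lies on the left of $\Gamma$ as it is traversed.

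\textbf{Step 1: convergence and choice of constants.} Fix principal branches in $\C^+$. Near $s=0$ the integrand is $O(\abs{s}^{-1-\frac1a})$. Since $a\in(-1,-1/2]$, we have $-1/a\in(1,2]$, so the exponent $-1-\frac1a$ lies in $(-1,0]$. Hence the integral converges at $0$, and I take $A=0$, which gives $\phi(0)=0$. At $\pm1$ the singularities are of order $-1/2$, so they are integrable and $\phi(\pm1)$ is finite. At $\infty$ the integrand behaves like $s^{-2-\frac1a}$, with exponent in $(-1,0]$. This is not integrable, so $\phi(s)\to\infty$ as $s\to\pm\infty$, and the image has a vertex at infinity.

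To identify $C$ and $r$, I substitute $t=-is$, which maps $\C^+$ onto the right half-plane $\rightplane$ and sends $s=-1$ to $t=i$. Since $(s-1)(s+1)=s^2-1=-(1+t^2)$, this shows that $\phi(s)=c\,k(-is)$ for an explicit unimodular multiple $c$ of $C$. Consequently $\abs{\phi(-1)}=\abs{C}\abs{a}\abs{k(i)}$, and choosing $\abs{C}=1/\abs{a}$ gives $\abs{\phi(-1)}=r$. Moreover, the conjugation symmetry $\overline{\phi(-\bar s)}$, up to the rotation $e^{i\pi(-1-1/a)}$ coming from the branch at $0$, gives $\abs{\phi(1)}=\abs{\phi(-1)}$. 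I then fix $\arg C$ so that $\phi(1)=w_2=re^{i\frac{\pi}{2a}}$.

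\textbf{Step 2: tracking the boundary.} On each real interval $(-\infty,-1)$, $(-1,0)$, $(0,1)$, $(1,\infty)$ the argument of the integrand is constant, so each interval is mapped onto a straight segment or ray. The argument jumps at a prevertex $z_j$ by $-\pi(\alpha_j-1)$.
\begin{itemize}
\item At $\pm1$ we have $\alpha=1/2$, so the turn is a right angle. This gives $R_2\perp S_2$ and $R_1\perp S_1$.
\item At $0$ we have $\alpha_0=-1/a$, so the interior angle is $-\pi/a$. This matches the angle between $w_1=re^{-i\frac{\pi}{2a}}$ and $w_2=re^{i\frac{\pi}{2a}}$ measured through the positive real axis, which equals $2\pi-\bigl(-\frac{\pi}{a}\bigr)$ on the other side.
\item On $(0,1)$ the integrand has argument $\arg C-\pi/2$. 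I will check that this equals $\frac{\pi}{2a}$, so that $[0,1]$ maps onto $S_2$ traversed outward from $0$. The other three pieces then follow from the computed turning angles.
\end{itemize}
Finally I verify that $R_1$ and $R_2$ head toward $+\infty$ in the direction of the positive real axis. This should follow from the angle at infinity, $\pi\alpha_\infty=\pi(1+1/a)\in(-\pi,0]$, which comes from $\sum_j(\alpha_j-1)=-2$.

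\textbf{Step 3: injectivity and the image.} This is the main obstacle. I must show that the image curve $\Gamma$ does not self-intersect, which here amounts to showing that $R_1$ and $R_2$ do not cross. The two rays start at the conjugate points $w_1$ and $w_2=\overline{w_1}$ and are mirror images of each other under conjugation. So they can only meet on the real axis, and I will exclude this by checking the sign of the imaginary part of their direction vectors: each ray moves away from the real axis or parallel to it. The degenerate case $a=-1/2$ needs a separate check. There the angle at $0$ equals $2\pi$, so $S_1$ and $S_2$ coincide as sets, and $\Gamma$ becomes a slit-type boundary; I will handle it by the same argument, interpreting $P$ as a slit region, or by continuity in $a$.

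Once $\Gamma$ is a Jordan curve on $\hat\C$, the real axis is traversed with $P$ on the left, since the positive real axis is contained in $P$. In addition $\phi$ is locally injective on $\R\setminus\{-1,0,1\}$ and tends to $\infty$ only at $s=\infty$. The argument principle, in the form of Darboux's theorem, then gives that $\phi:\C^+\to P$ is a conformal bijection.
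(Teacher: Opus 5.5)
Your route is correct, and it differs genuinely from the paper's.

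\textbf{The paper's route (necessity).} The paper starts from an abstract Riemann map $F:\C^+\to P$ and extends it to the boundary by Carath\'eodory--Osgood. It reflects across the four boundary intervals and shows that $F''/F'-\sum_j(\alpha_j-1)/(z-z_j)$ is entire and vanishes at infinity, using the asymptotics $F(z)\sim c_0z^{-1-\frac1a}$ taken from Henrici. Liouville then forces $F$ to have Schwarz--Christoffel form.

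\textbf{Your route (sufficiency).} You check that the explicit integral maps $\R$ homeomorphically onto $\Gamma$, and conclude with the argument principle (Darboux--Picard).

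\textbf{What your route buys.}
\begin{enumerate}[(i)]
\item It avoids the Riemann mapping theorem and Carath\'eodory altogether.
\item It pins down the constants. Taking $A=0$, $\abs{C}=-1/a$ and $\arg C=\frac\pi2+\frac{\pi}{2a}$ gives $\phi(1)=w_2$ and $\phi(-1)=e^{-i\pi/a}\phi(1)=w_1$.
\item It yields directly the relation $\phi(s)=c\,k(-is)$, which the paper needs next in \Cref{sc-mapping}.
\item It sidesteps a point the paper leaves implicit: why the abstract Riemann map can be taken with prevertices exactly at $-1,0,1$ and with $\infty\mapsto\infty$. A three-point normalization fixes only three of these four conditions. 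Placing the fourth requires the symmetry $F\mapsto\overline{F(-\bar z)}$.
\end{enumerate}

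\textbf{What it costs.} You must verify that $\Gamma$ is simple, and this is easy here. For $a\in(-1,-\frac12)$, the ray $R_2$ starts at $w_2$ with $\Im w_2<0$ and has direction of argument $\frac\pi2+\frac{\pi}{2a}\in(-\frac\pi2,0)$. Hence $\Gamma\setminus\{0\}$ splits into two pieces, one in the open upper half-plane and one in the open lower half-plane.

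You must also run the argument principle on truncated half-discs. As $\abs{s}\to\infty$ in $\overline{\C^+}$, one has $\phi(s)=\frac{C}{-1-1/a}\,s^{-1-\frac1a}\bigl(1+o(1)\bigr)$. So the images of large semicircles sweep the angle $\pi(-1-\frac1a)\in(0,\pi]$. This is exactly the opening between $R_2$ and $R_1$, so these images close $\Gamma$ through $P$ and not around its complement.

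\textbf{Minor slips.}
\begin{itemize}
\item The exponent $-1-\frac1a$ at $0$ lies in $(0,1]$, not $(-1,0]$. The integrand is therefore bounded at $0$, consistent with the reflex angle $-\pi/a$.
\item Likewise $1+\frac1a\in[-1,0)$.
\item The constant $c$ is $-a$ times a unimodular multiple of $C$, not merely a unimodular multiple.
\end{itemize}
None of these affects the argument.

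\textbf{The case $a=-\frac12$.} Here $w_1=w_2=-r$ and $\Gamma$ is not a Jordan curve, a case the paper's Carath\'eodory step also glosses over. Appealing to ``continuity in $a$'' would need more than Hurwitz's theorem, since Hurwitz gives univalence of the limit but not its image. It is simpler to do one of the following:
\begin{itemize}
\item run the argument principle directly for the slit region, or
\item note that here $k(z)=2(\sqrt{1+z^2}-1)$, which maps $\rightplane$ explicitly onto $\{\Re z>-2\}\setminus(-2,0]$.
\end{itemize}
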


For the sake of completeness, the statement will be proven using the standard proof of the Schawrz-Christoffel mapping for general polygons found in \cite{henrici1993applied}.

\begin{proof}
    The existence of a conformal map from $\C^+$ to $P$ is guaranteed by the Riemann mapping theorem. Let $F$ be such a function.  By the Carathéodory-Osgood Theorem, it extends to an homeomorphism from $\C^+\cup \R$ to $\bar{P}$ (see, for example \cite[page 238]{stein2010complex}). With this continuous extension, the Schwarz reflection principle can be applied across the segments $s_1 = (-\infty,-1), s_2 = (-1,0), s_3 = (0,1)$ and $s_4 = (1,\infty)$ to extend $F$ analytically to the bi-infinite strips $\{\Re(z) < -1\}, \{-1<\Re(z) < 0\}, \{0<\Re(z) <1\}, \{1<\Re(z)\}$. Then $F$ is locally injective in each of the bi-infinite strips, and in particular its derivative exists for every $z_0\in \R \setminus \{-1,0,1\}$ and it is never 0. Let $\tau(z_0)$ be the tangent vector to $\Gamma$ at the point $F(z_0)\in \Gamma$ so that $$\arg\tau(z_0) = \arg(F'(z_0)).$$ This means $\arg(F'(z))$ is constant in each of the segments $s_j$, and at the prevertices $-1,0,1$ it has a jump depending on the angle of the vertex. An examination of the vectors tangent to the contour $\Gamma\setminus\{-1,0,1\}$, in counterclockwise order yields
    \begin{align*}
        \arg(F'(-1^+)) - \arg(F'(-1^-)) &= \frac\pi2 = \pi\left(1-\frac12\right), \\
        \arg(F'(0^+)) - \arg(F'(0^-)) &= \pi+\frac\pi a = \pi\left(1+\frac 1a\right), \\
        \arg(F'(1^+)) - \arg(F'(1^-)) &= \frac\pi2=\pi\left(1-\frac12\right), 
    \end{align*}
    i.e., it corresponds to the external angles of $P$ at each vertex.
    
    Thus, around each prevertex $z_j$, $F'(z)$ behaves locally as $(z-z_j)^{\alpha_j-1}$, where $\alpha_j$ is the corresponding internal angle at the vertex $w_j$. In a neighborhood of $z_j$, $F'$ can be written as 
    
    \begin{equation}
        F'(z) = (z-z_j)^{\alpha_j-1} \psi(z), \label{eq:analytique_near_vertex}
    \end{equation}
    with $\psi$ an analytic function in this neighborhood. 
    
    An application of the Schwartz reflection principle along the real axis gives that the image by $F$ of the lower half plane $\C^-\coloneqq \{\Im(z) < 0\}$ is a reflection $P'$ of $P$ with respect one of its sides. Another application of the Schwarz reflection principle takes the upper half plane $\C^+$ to a new reflection $P''$ of $P'$ with respect to one of the sides of $P'$. Continuing this procedure, it is possible to see that the image of $\C^+$ or $\C^-$ by $F$ is always an affine transformation of $P$, so any branch of $F$ can be written as $A + CF$ for some constants $A,C$. Then, the function $$\frac{(A+CF(z))''}{(A+CF(z))'} = \frac{F''(z)}{F'(z)}$$ is univariate in the complex plane, except at three possible singular points, since it is independent of the constants $A,C$. Furthermore, by \eqref{eq:analytique_near_vertex} and the fact that $F$ is conformal, $F''/F'$ has a simple pole with residue $\alpha_j-1$ at the prevertex $z_j$, and the function
    
    \begin{equation*}
        \frac{F''(z)}{F'(z)} - \sum_{j=1}^3 \frac{\alpha_j-1}{z-z_j}
    \end{equation*}
    is entire. Let us prove that it is also bounded.
    
   Take $z \in E_M \coloneqq \C^+\cap \{w \in \C : \abs{w} \ge M\}$. Since the angle between $S_1, S_2$ at infinity is $-\pi(1+\frac1a)$, it is possible to conclude that $F(z) \sim c_0z^{-1-\frac1a}$ for $z\in E_M$ and a constant $c_0\in\C$ when $M\to\infty$ (see \cite[Theorem 5.12d]{henrici1993applied}.) 
   
    With this, define the function $H$ as $$H(z) \coloneqq \left( F\left(\frac1z\right) \right)^{\frac a{a+1}}.$$ When $z\to 0$, $F(1/z)$ grows to infinity with speed $-\frac{1+a}{a}$, so $F(1/z) \sim z^{\frac{a+1}{a}}$ as $z\to 0$. Then $H(0)=0$, and since $F$ is conformal, $H$ is conformal in a small half-disc around 0 with radius $\epsilon>0$. Furthermore, $H$ maps the straight segment $[-\epsilon,\epsilon]$ into another straight segment. By the Schwarz reflection principle, $H$ can be extended analytically to the whole neighborhood of radius $\epsilon$ around $0$. This means that $$H(z) = z\psi_1(z),$$ where $\psi_1$ is a function that is holomorphic and does not vanish in the given neighborhood of $0$. We can then choose a branch of logarithm such that $$F\left(\frac1z\right) = z^{\frac{a+1}{a}}\psi_2(z),$$ with $\psi_2$ another regular function in the $\epsilon$ disc around 0. In particular, in a neighborhood of infinity, $F$ has a power series expansion of the form $$ z^{- \frac{a+1}{a}} \sum_{j=0}^\infty b_j z^{-j},$$ where $b_0\neq 0$. It follows that
    \begin{align*}
        \lim_{z\to\infty} \frac{F''(z)}{F'(z)} - \sum_{j=1}^3 \frac{\alpha_j-1}{z-z_j} &= \lim_{z\to\infty} \frac1z = 0.
    \end{align*}
    
    By Liouville's Theorem, $\frac{F''(z)}{F'(z)} - \sum_{j=1}^3 \frac{\alpha_j-1}{z-z_j}$ is constant and its value in the whole complex plane is $0$. We can write
    \begin{align*}
        (\log(F'(z)))' &= \frac{F''(z)}{F'(z)} = \sum_{j=1}^3 \frac{\alpha_j-1}{z-z_j},
        \intertext{which allows one to conclude}
        \log(F'(z)) &= \sum_{j=1}^3 \log(z-z_j)^{\alpha_j-1} + \tilde{C}.
        \intertext{Finally, integrating and replacing $z_1=-1, z_2=0, z_3=1$ and $\alpha_1 = 1/2, \alpha_2 = -1/a, \alpha_3 = 1/2$, leads to}
        F(z) = B + C\int_0^z\prod_{j=1}^3  (s-z_j)^{\alpha_j-1} \d s &= B + C\int_0^z s^{-\frac1a-1}(s-1)^{-\frac12}(s+1)^{-\frac12} \d s.
    \end{align*}
\end{proof}

\begin{proposition} \label{sc-mapping}
    For $a\in(-1,-1/2]$, the function $k(z) = -\frac1a\int_0^z \frac{1}{s^{1+\frac1a}\sqrt{1+s^2}}\d s$ is a conformal mapping between $\rightplane$ and $P$. 
\end{proposition}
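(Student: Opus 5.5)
The plan is to reduce \Cref{sc-mapping} to \Cref{sc} by a change of variables that carries $\rightplane$ onto the upper half-plane $\C^+$.

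\textbf{Step 1: change of variables.} Set $z = -i\zeta$, so that $\zeta = iz$ ranges over $\C^+$ exactly when $z \in \rightplane$. Substituting $s = -iu$ in the integral defining $k$ gives
\begin{equation*}
k(z) = -\frac1a\int_0^{\zeta} \frac{(-i)\,\d u}{(-iu)^{1+\frac1a}\sqrt{1-u^2}}.
\end{equation*}
Writing $\sqrt{1-u^2} = \pm i\,(u-1)^{\frac12}(u+1)^{\frac12}$ for a suitable choice of branches, this becomes
\begin{equation*}
k(z) = C\int_0^{\zeta} u^{-1-\frac1a}(u-1)^{-\frac12}(u+1)^{-\frac12}\,\d u,
\end{equation*}
with an explicit constant $C \neq 0$. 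The constant collects the factor $-1/a$, the power $(-i)^{-\frac1a}$ and the unimodular factor $\pm i$. Hence $k(-i\zeta) = \phi(\zeta)$ for the map $\phi$ of \Cref{sc} with $A = 0$, up to fixing $C$.

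\textbf{Step 2: choice of branches.} The branches of $u^{-1-\frac1a}$ and $(u\pm1)^{-\frac12}$ are taken with cuts in the closed lower half-plane, so the integrand is analytic on $\C^+$. This choice must agree with the principal branches used in \eqref{eq:definition_k} on the positive real axis, which corresponds to $\zeta \in i\R_+$. The integrability at $0$ holds since $-1-\frac1a > -1$ for $a < 0$.

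\textbf{Step 3: identification of the image with $P$.} By \Cref{sc} and the uniqueness part of the Schwarz--Christoffel formula, an integral of this form with a nonzero constant maps $\C^+$ conformally onto a polygon. This polygon has vertices at the images of the prevertices $-1, 0, 1$, with interior angles $\pi/2$, $-\pi/a$, $\pi/2$ and a vertex at infinity. It therefore remains to check that this polygon coincides with $P$ itself rather than a rotated or scaled copy. The checks are as follows.
\begin{enumerate}
\item $k(0) = 0$, which fixes $A = 0$.
\item $k$ maps $(0,\infty)$ onto the positive real axis, since $k$ is real and increasing there. So the image contains $\R_+$ and the rotation is fixed.
\item The prevertices $\zeta = \mp 1$ correspond to $z = \pm i$. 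Computing $k(\pm i)$ along the segments $[0,\pm i]$, the integrand has constant argument there, so the image of each segment is a straight segment from $0$. Its length is $\abs{k(i)} = r$ and its direction is $e^{\mp i\pi/(2a)}$, up to matching signs. This gives $w_1$ and $w_2$.
\item On $\{\pm iy : y > 1\}$ the argument of the integrand is again constant. It is rotated by $\pi/2$ relative to the previous segment because of the factor $\sqrt{1+s^2}$ changing from real to imaginary. This produces the rays $R_i$ perpendicular to $S_i$. These rays head toward $+\infty$ since the integral diverges logarithmically at infinity for these exponents; in fact $k(z)$ grows like $\log z$ times a constant.
\end{enumerate}
Thus the boundary $i\R$ of $\rightplane$ is mapped onto $\Gamma$, and by the argument principle for boundary correspondence $k$ is a bijection from $\rightplane$ onto the region bounded by $\Gamma$ that contains $\R_+$, namely $P$.

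\textbf{Main obstacle.} I expect the hardest part to be the bookkeeping of branches and arguments in Steps 2 and 3: showing that the segment images have exactly the arguments $\pm\pi/(2a)$ and that the rays point to $+\infty$, i.e. in the correct orientation. I would first compute directly along $s = iy$: there $s^{-1-\frac1a} = y^{-1-\frac1a} e^{-i\frac\pi2(1+\frac1a)}$, and $\sqrt{1+s^2} = \sqrt{1-y^2}$, which is real for $y < 1$ and $\pm i\sqrt{y^2-1}$ for $y > 1$. This yields the directions explicitly, and the case $a = -1/2$, where the angle at $0$ equals $2\pi$, can be checked separately.
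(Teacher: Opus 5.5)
Your proposal is correct and follows essentially the paper's route. You compose with $z\mapsto iz$ to reduce to \Cref{sc}, observe that $k(z)=\lambda\,\phi(iz)$ is an affine copy of the Schwarz--Christoffel map, and fix $\lambda=1$ using $k(0)=0$ and the exact value of $k(i)$ (modulus $r$, argument $-\pi/(2a)$, which the paper obtains from a Beta integral); it is this value, not $k(\R_+)=\R_+$ alone, that pins down the rotation, so tracing the rays and the boundary-correspondence step are only a consistency check. One small correction: for $a\in(-1,-1/2]$ the map $k$ grows like $z^{-1-\frac1a}/(a+1)$, a positive power at most $1$, not like $\log z$ (that happens only at the excluded value $a=-1$), though the divergence you actually need still holds.
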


\begin{proof}
For every $B,C\in \C$, \Cref{sc} guarantees that

\begin{equation*}
    \phi(z) = \phi(B,C;z) = B + C\int_0^z s^{-1-\frac1a}(s-1)^{-\frac12}(s+1)^{-\frac12}  \d s
\end{equation*}
is a conformal mapping from the upper half plane to $P$ up to a rotation, scaling and translation given by the constants $B,C$. The function $z \mapsto iz$ is conformal from $\rightplane$ to the upper half-plane $\C^+$, and the composition $\phi(iz)$ given by

\begin{equation*}
    \phi(iz) = B + C\int_0^{iz} s^{-1-\frac1a}(s-1)^{-\frac12}(s+1)^{-\frac12}  \d s = B + C\int_0^{z} i^{2+\frac1a}s^{-1-\frac1a}(s+i)^{-\frac12}(s-i)^{-\frac12}  \d s 
\end{equation*}

is conformal from $\rightplane$ to an affine transformation of $P$ for every $B,C$. In particular, $k$ is conformal between $\rightplane$ and an affine transformation $P'$ of $P$. The condition $k(0)=0$ already fixes the origin, so to determine uniquely $P'$ it is only necessary to compute $k(i)$ and $k(-i)$.

\begin{align*}
    k(-i) &= -\frac1a \int_0^{-i} s^{-1-\frac1a}(s^2 + 1)^{-\frac12} \d s = -\frac{\left(-1\right)^{-\frac1a}}a \int_0^{i}s^{-1-\frac1a}(s^2 + 1)^{-\frac12} \d s = e^{-i\frac\pi a}k(i),
\end{align*}
so that $k(-i)$ is a rotation in an angle of $-\frac{\pi}a$ of $k(i)$. Next,

\begin{align*}
    k(i) &= -\frac1a \int_0^i s^{-1-\frac1a}(s^2+1)^{-\frac12} \d s = -\frac{i^{-\frac1a}}a \int_0^1 s^{-1-\frac1a}(1-s^2)^{-\frac12} \d s\\
    &= -\frac{i^{-\frac1a}}a \int_0^1 s^{-\frac12-\frac1{2a}}(1-s)^{-\frac12} \left(\frac12s^{-\frac12}\right) \d s = -\frac{i^{-\frac1a}}{2a} \int_0^1 s^{-1-\frac1{2a}}(1-s)^{-\frac12} \d s\\
    &= -\frac{i^{-\frac1a}}{2a} B\left(-\frac{1}{2a}, \frac12\right),
\end{align*}
where $B(x,y)$ is the Beta function. Since $-1/(2a)$ and $1/2$ are both  positive, the Beta function is positive, and then $k(i)$ has argument $-\frac\pi{2a}$ and module \begin{equation}
    -\frac1{2a}B\left(-\frac{1}{2a}, \frac12\right). \label{eq:beta}
\end{equation}

This argument makes $P'$ symmetric with respect to the real axis.
\end{proof}

\begin{lemma} \label{lemma:extension}
    For every $a \in (-1,-1/2]$, the function $k^{-1}(z): P \to \rightplane$ admits an analytical continuation to the domain $D$ as defined in \Cref{analicity}.
\end{lemma}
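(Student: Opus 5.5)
The plan is to combine \Cref{sc-mapping} with Schwarz reflection across the straight sides of $P$, and then use the monodromy theorem on the simply connected domain $D$. Throughout, the domain is read in the $k$-variable: $w = k(\sinh(x)) - z\sinh^{-\frac1a}(x)$. The point $z_0$ corresponds to $w=0$, and the half-plane $\{\Re(z)<z_0\}$ corresponds to $\{\Re(w)>0\}$. So the statement to prove is that $k^{-1}$, defined on $P$ by \Cref{sc-mapping}, continues analytically to $\{\Re(w)>0\}$ together with a sector $\{0<\abs{w}<r,\ \abs{\arg w}<2\pi-\theta\}$.

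\textbf{Step 1: a neighbourhood of $0$.} The interior angle of $P$ at $0$ is $-\pi/a\in(\pi,2\pi]$, and the two sides $S_1,S_2$ are symmetric about $\R_+$. Hence $P$ already contains a sector at $0$ of opening larger than $\pi$. This sector contains all of $\{\abs{w}<r,\ \Re(w)>0\}$ for $r<r_0=\abs{k(i)}$, and it gives the circular sector required in \Cref{analicity}. The local behaviour is the same as in \Cref{local_inversion}: $k^{-1}(w)=g^{-1}(w^{-a})$. This also shows that $0$ is a genuine branch point, so no larger neighbourhood can be expected.

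\textbf{Step 2: the rest of the half-plane.} $P$ does not contain all of $\{\Re(w)>0\}$, because the rays $R_i$ leave $w_i$ in the directions $\mp(\frac{\pi}{2}+\frac{\pi}{2a})$ and cut into the right half-plane. To go beyond them I will use reflection.

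Under $k^{-1}$, each ray $R_i$ corresponds to a half-line of the imaginary axis, namely $\{\pm i t : t>1\}$, and each segment $S_i$ corresponds to $\{\pm it : 0<t<1\}$. By the Schwarz reflection principle, $k^{-1}$ continues across $R_2$ by the formula
\[
k^{-1}(w) = -\overline{k^{-1}(\sigma_2(w))},
\]
where $\sigma_2$ is the reflection in the line containing $R_2$; the same holds for $R_1$ and for the $S_i$. Iterating, $k^{-1}$ continues along every path that avoids the orbit $\mathcal O$ of the vertices $\{0,w_1,w_2\}$ under the group generated by these reflections.

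Equivalently, one can use the ODE \eqref{eq:diffeq}. The continuation can only break down where $k^{-1}$ reaches one of the critical values $0$, $\pm i$ or $\infty$ of $k$, and these values correspond exactly to points of $\mathcal O$.

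Since $D$ is simply connected, the monodromy theorem then yields a single-valued analytic continuation on $D$, provided $D\cap\mathcal O=\{0\}$ holds with $0$ on the boundary.

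\textbf{Step 3: locating the orbit $\mathcal O$ (main obstacle).} This is the step I expect to be hardest. Some reflected vertices are easy to place:
\begin{itemize}
\item $\Re(w_i) = r\cos\frac{\pi}{2a} \le 0$, so $w_1,w_2$ lie outside the open half-plane $\{\Re(w)>0\}$.
\item Reflecting $0$ across the line containing $R_i$ gives $2w_i$, which is also in the closed left half-plane.
\end{itemize}
The general claim I would aim for is that every element of $\mathcal O$ other than $0$ lies in $\{\Re(w)\le 0\}$. I would first try to show that the first-generation reflections of $P$ across $R_1$ and $R_2$, together with $P$ itself, already cover $\{\Re(w)>0\}$. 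This should follow from the fact that $R_1$ and $R_2$ diverge with opening angle $-\pi(1+\frac1a)\le\pi$, so the reflected copies fill the two wedges of the half-plane left outside $P$. Only vertices of these three copies would then need checking, and I would compute them explicitly from the vertices $w_i$ and the directions of the sides.

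If this covering fails for $a$ close to $-1$, the fallback is to argue with the vector field $ig$ along horizontal lines, as in \Cref{plane_inversion}. Restricted to the region outside $P$, that argument only needs to show that $\Re(k^{-1})$ stays away from the critical values $0$ and $\pm i$.
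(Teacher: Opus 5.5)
Your overall tool, Schwarz reflection of $k^{-1}$ across the unbounded sides of $P$, is the one the paper uses. However, Step~3, which carries the whole lemma, fails as you set it up, for two reasons.

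\textbf{The orbit criterion is false.} Your group contains the reflections in the two lines through $0$ carrying $S_1,S_2$. Reflecting $k(i)=re^{-i\pi/(2a)}$ in the line through $0$ and $k(-i)$ gives $re^{3i\pi/(2a)}$. For $a=-5/6$ this is $re^{i\pi/5}$, a point of $P\cap\{\Re w>0\}\subset D$ at which the principal branch is perfectly regular. So $D\cap\mathcal O=\{0\}$ fails in general. A point of $\mathcal O$ is a branch point only on the sheets for which it is a vertex, so a monodromy argument phrased with the full orbit cannot be run. You have to track which branches are actually reached from $P$ by paths in $D$, namely those obtained by reflecting successively across unbounded sides, and control only their vertices.

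\textbf{The first-generation covering fails for $a\in(-1,-3/4)$.} Let $\sigma$ be the reflection in the line carrying the unbounded side issuing from $k(i)$. The outer unbounded side of $\sigma(P)$ has direction $\frac{3\pi}{2}\left(-\frac1a-1\right)$, which is $<\pi/2$ exactly when $a<-3/4$. That side then itself enters $\{\Re w>0\}$, and everything above it is left uncovered. So ``opening angle $\le\pi$'' is not the relevant condition. For $a\in[-3/4,-1/2]$ your three-copy plan does work, once you also check that $\sigma(k(-i))$ lies in $\{\Re w\le 0\}$. Your fallback via the vector field $ig$ is only named, and the paper remarks that this argument does not carry over directly to $a<-1/2$.

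\textbf{What is missing} is the iteration used in the paper. Reflect across the unbounded side at $k(i)$, then across the new unbounded side of the reflected copy, and so on, using $\lceil -\frac{1+2a}{2+2a}\rceil$ copies on each side of the real axis. You also need a way to control all the new vertices at once, and there is a clean one. The two lines carrying the unbounded sides of $P$ are tangent to the circle $\abs{w}=r$ at $k(\pm i)$ and meet at $c_0=r/\cos\left(\frac{\pi}{2a}\right)<0$. Every reflection in the chain fixes $c_0$. Hence each copy is a wedge of opening $\pi\left(-\frac1a-1\right)$ with vertex $c_0$, minus an isometric image of the closed convex quadrilateral with vertices $c_0,k(-i),0,k(i)$. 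All these quadrilaterals lie in the closed disc of radius $\abs{c_0}$ about $c_0$, which meets $\{\Re w\ge 0\}$ only at $0$. Every point of $\{\Re w>0\}$ is seen from $c_0$ in a direction in $(-\pi/2,\pi/2)$. Therefore the upper and lower chains cover the half-plane by non-overlapping pieces. The piecewise reflected branches then give a single-valued continuation to $D$ directly, with no monodromy argument needed.
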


\begin{proof}
    By \Cref{sc-mapping}, $k:\rightplane \to P$ is a conformal mapping. It is possible to distinguish two cases:

\begin{enumerate}[i.]
    \item \label{item:case-12} $a=-\frac12$. $k^{-1}$ is analytic in $\{ z : \Re(z) > -2 \}\setminus \R^-$, since $k(i)=k(-i)=-2$. The preimage of $\rightplane$ under $k^{-1}$ contains $\rightplane$ and all of the circular sectors around 0 with angle $\pi<\theta<2\pi$ and radius $0<r<2$. See \Cref{fig:P_a-12}.
    
    \item \label{item:case-1-12} $a\in(-1,-\frac12)$. The preimage of $\rightplane$ under $k^{-1}$ does not contain $\rightplane$, but it does contain all circular sectors around 0 with angle $\pi<\theta<-\frac\pi a$, and radius $\abs{k(i)}$. See \Cref{fig:P_apt-12}.
\end{enumerate}

\begin{figure}
    \centering
    \begin{subfigure}{0.5\textwidth}
    \centering
        \tikzset{every picture/.style={line width=0.45pt}} %set default line width to 0.75pt        

\begin{tikzpicture}[scale=0.4,x=0.75pt,y=0.75pt,yscale=-1,xscale=1]
\path (0,443); %set diagram left start at 0, and has height of 543

%Straight Lines [id:da634688010381601] 
\draw [color={rgb, 255:red, 155; green, 155; blue, 155 }  ,draw opacity=1 ]   (200,110) -- (310,10) ;
%Straight Lines [id:da06192034808270941] 
\draw [color={rgb, 255:red, 155; green, 155; blue, 155 }  ,draw opacity=1 ]   (200,130) -- (330,10) ;
%Straight Lines [id:da7552804117094719] 
\draw [color={rgb, 255:red, 155; green, 155; blue, 155 }  ,draw opacity=1 ]   (200,150) -- (350,10) ;
%Straight Lines [id:da21542792770949615] 
\draw [color={rgb, 255:red, 155; green, 155; blue, 155 }  ,draw opacity=1 ]   (200,170) -- (370,10) ;
%Straight Lines [id:da5924957362728022] 
\draw [color={rgb, 255:red, 155; green, 155; blue, 155 }  ,draw opacity=1 ]   (200,190) -- (390,10) ;
%Straight Lines [id:da6836420534243748] 
\draw [color={rgb, 255:red, 155; green, 155; blue, 155 }  ,draw opacity=1 ]   (200,210) -- (410,10) ;
%Straight Lines [id:da25606869728310777] 
\draw [color={rgb, 255:red, 155; green, 155; blue, 155 }  ,draw opacity=1 ]   (200,230) -- (430,10) ;
%Straight Lines [id:da3851367840179094] 
\draw [color={rgb, 255:red, 155; green, 155; blue, 155 }  ,draw opacity=1 ]   (200,250) -- (450,10) ;
%Straight Lines [id:da4650281124250566] 
\draw [color={rgb, 255:red, 155; green, 155; blue, 155 }  ,draw opacity=1 ]   (200,270) -- (470,10) ;
%Straight Lines [id:da5380538510445385] 
\draw [color={rgb, 255:red, 155; green, 155; blue, 155 }  ,draw opacity=1 ]   (200,290) -- (490,10) ;
%Straight Lines [id:da15618811182767345] 
\draw [color={rgb, 255:red, 155; green, 155; blue, 155 }  ,draw opacity=1 ]   (200,310) -- (510,10) ;
%Straight Lines [id:da7135771585481248] 
\draw [color={rgb, 255:red, 155; green, 155; blue, 155 }  ,draw opacity=1 ]   (200,330) -- (530,10) ;
%Straight Lines [id:da6489885690804919] 
\draw [color={rgb, 255:red, 155; green, 155; blue, 155 }  ,draw opacity=1 ]   (200,350) -- (550,10) ;
%Straight Lines [id:da5054678031589592] 
\draw [color={rgb, 255:red, 155; green, 155; blue, 155 }  ,draw opacity=1 ]   (200,370) -- (570,10) ;
%Straight Lines [id:da14297654169793061] 
\draw [color={rgb, 255:red, 155; green, 155; blue, 155 }  ,draw opacity=1 ]   (200,390) -- (590,10) ;
%Straight Lines [id:da891424034966864] 
\draw [color={rgb, 255:red, 155; green, 155; blue, 155 }  ,draw opacity=1 ]   (200,410) -- (610,10) ;
%Straight Lines [id:da41728114114891846] 
\draw [color={rgb, 255:red, 155; green, 155; blue, 155 }  ,draw opacity=1 ]   (200,430) -- (630,10) ;
%Straight Lines [id:da7973690568593408] 
\draw [color={rgb, 255:red, 155; green, 155; blue, 155 }  ,draw opacity=1 ]   (200,450) -- (650,10) ;
%Straight Lines [id:da046961195777751485] 
\draw [color={rgb, 255:red, 155; green, 155; blue, 155 }  ,draw opacity=1 ]   (200,470) -- (650,30) ;
%Straight Lines [id:da8222615589595424] 
\draw [color={rgb, 255:red, 155; green, 155; blue, 155 }  ,draw opacity=1 ]   (200,490) -- (650,50) ;
%Straight Lines [id:da08114105086716827] 
\draw [color={rgb, 255:red, 155; green, 155; blue, 155 }  ,draw opacity=1 ]   (200,510) -- (650,70) ;
%Straight Lines [id:da14952644565382034] 
\draw [color={rgb, 255:red, 155; green, 155; blue, 155 }  ,draw opacity=1 ]   (220,510) -- (650,90) ;
%Straight Lines [id:da5473063802554726] 
\draw [color={rgb, 255:red, 155; green, 155; blue, 155 }  ,draw opacity=1 ]   (240,510) -- (650,110) ;
%Straight Lines [id:da1435248652407941] 
\draw [color={rgb, 255:red, 155; green, 155; blue, 155 }  ,draw opacity=1 ]   (260,510) -- (650,130) ;
%Straight Lines [id:da15366192578614524] 
\draw [color={rgb, 255:red, 155; green, 155; blue, 155 }  ,draw opacity=1 ]   (280,510) -- (650,150) ;
%Straight Lines [id:da871932171410194] 
\draw [color={rgb, 255:red, 155; green, 155; blue, 155 }  ,draw opacity=1 ]   (300,510) -- (650,170) ;
%Straight Lines [id:da06003778059634479] 
\draw [color={rgb, 255:red, 155; green, 155; blue, 155 }  ,draw opacity=1 ]   (320,510) -- (650,190) ;
%Straight Lines [id:da8710096802891837] 
\draw [color={rgb, 255:red, 155; green, 155; blue, 155 }  ,draw opacity=1 ]   (340,510) -- (650,210) ;
%Straight Lines [id:da14724815031650396] 
\draw [color={rgb, 255:red, 155; green, 155; blue, 155 }  ,draw opacity=1 ]   (360,510) -- (650,230) ;
%Straight Lines [id:da06218076169126263] 
\draw [color={rgb, 255:red, 155; green, 155; blue, 155 }  ,draw opacity=1 ]   (380,510) -- (650,250) ;
%Straight Lines [id:da8950752335967627] 
\draw [color={rgb, 255:red, 155; green, 155; blue, 155 }  ,draw opacity=1 ]   (400,510) -- (650,270) ;
%Straight Lines [id:da7964982972865481] 
\draw [color={rgb, 255:red, 155; green, 155; blue, 155 }  ,draw opacity=1 ]   (420,510) -- (650,290) ;
%Straight Lines [id:da16422098793191786] 
\draw [color={rgb, 255:red, 155; green, 155; blue, 155 }  ,draw opacity=1 ]   (440,510) -- (650,310) ;
%Straight Lines [id:da3529734969790512] 
\draw [color={rgb, 255:red, 155; green, 155; blue, 155 }  ,draw opacity=1 ]   (460,510) -- (650,330) ;
%Straight Lines [id:da6691425452510807] 
\draw [color={rgb, 255:red, 155; green, 155; blue, 155 }  ,draw opacity=1 ]   (480,510) -- (650,350) ;
%Straight Lines [id:da597710710701585] 
\draw [color={rgb, 255:red, 155; green, 155; blue, 155 }  ,draw opacity=1 ]   (500,510) -- (650,370) ;
%Straight Lines [id:da4346307684250741] 
\draw [color={rgb, 255:red, 155; green, 155; blue, 155 }  ,draw opacity=1 ]   (520,510) -- (650,390) ;
%Straight Lines [id:da6101365245815698] 
\draw [color={rgb, 255:red, 155; green, 155; blue, 155 }  ,draw opacity=1 ]   (540,510) -- (650,410) ;
%Straight Lines [id:da9660870987772634] 
\draw [color={rgb, 255:red, 155; green, 155; blue, 155 }  ,draw opacity=1 ]   (200,90) -- (290,10) ;
%Straight Lines [id:da9468291955855882] 
\draw [color={rgb, 255:red, 155; green, 155; blue, 155 }  ,draw opacity=1 ]   (200,70) -- (270,10) ;
%Straight Lines [id:da36964110731359956] 
\draw [color={rgb, 255:red, 155; green, 155; blue, 155 }  ,draw opacity=1 ]   (200,50) -- (250,10) ;
%Straight Lines [id:da5962151740417437] 
\draw [color={rgb, 255:red, 155; green, 155; blue, 155 }  ,draw opacity=1 ]   (200,30) -- (230,10) ;
%Straight Lines [id:da15856922223002656] 
\draw [color={rgb, 255:red, 155; green, 155; blue, 155 }  ,draw opacity=1 ]   (560,510) -- (650,430) ;
%Straight Lines [id:da3478350926924778] 
\draw [color={rgb, 255:red, 155; green, 155; blue, 155 }  ,draw opacity=1 ]   (580,510) -- (650,450) ;
%Straight Lines [id:da20111917549588632] 
\draw [color={rgb, 255:red, 155; green, 155; blue, 155 }  ,draw opacity=1 ]   (600,510) -- (650,470) ;
%Straight Lines [id:da119398756282875] 
\draw [color={rgb, 255:red, 155; green, 155; blue, 155 }  ,draw opacity=1 ]   (620,510) -- (650,490) ;
%Shape: Axis 2D [id:dp5066390813843493] 
\draw  (120,260.4) -- (650.07,260.4)(330.07,20.4) -- (330.07,510.4) (643.07,255.4) -- (650.07,260.4) -- (643.07,265.4) (325.07,27.4) -- (330.07,20.4) -- (335.07,27.4)  ;
%Straight Lines [id:da5371350273505343] 
\draw [line width=2.25]    (200,260) -- (200,10) ;
%Straight Lines [id:da6111319348969347] 
\draw [line width=2.25]    (200,260) -- (330.07,260.4) ;
%Straight Lines [id:da7559423641764833] 
\draw [line width=2.25]    (200,260) -- (200,510) ;
%Shape: Arc [id:dp38624583108693633] 
\draw  [draw opacity=0][dash pattern={on 5.63pt off 4.5pt}][line width=1.5]  (300.07,259.81) .. controls (300.39,243.51) and (313.7,230.4) .. (330.07,230.4) .. controls (346.64,230.4) and (360.07,243.83) .. (360.07,260.4) .. controls (360.07,276.97) and (346.64,290.4) .. (330.07,290.4) .. controls (314.32,290.4) and (301.41,278.27) .. (300.17,262.85) -- (330.07,260.4) -- cycle ; \draw [dash pattern={on 5.63pt off 4.5pt}][line width=1.5]  [dash pattern={on 5.63pt off 4.5pt}]  (300.43,255.73) .. controls (302.67,241.38) and (315.09,230.4) .. (330.07,230.4) .. controls (346.64,230.4) and (360.07,243.83) .. (360.07,260.4) .. controls (360.07,276.97) and (346.64,290.4) .. (330.07,290.4) .. controls (314.32,290.4) and (301.41,278.27) .. (300.17,262.85) ;  \draw [shift={(300.07,259.81)}, rotate = 282.98] [fill={rgb, 255:red, 0; green, 0; blue, 0 }  ][dash pattern={on 3.49pt off 4.5pt}][line width=0.08]  [draw opacity=0] (13.4,-6.43) -- (0,0) -- (13.4,6.44) -- (8.9,0) -- cycle    ;

% Text Node
\draw (351.07,212.8) node [anchor=north west][inner sep=0.75pt]    {$-\frac{\pi }{a} = 2\pi$};
% Text Node
\draw (130,262.4) node [anchor=north west][inner sep=0.75pt]    {$k( i)$};
% Text Node
\draw (616.07,272.8) node [anchor=north west][inner sep=0.75pt]    {$\Re ( z)$};
% Text Node
\draw (250,23.8) node [anchor=north west][inner sep=0.75pt]    {$\Im ( z)$};

\end{tikzpicture}
        \caption{Case $a = -\frac12$}
        \label{fig:P_a-12}
    \end{subfigure}
    \begin{subfigure}{0.45\textwidth}
    \centering
        \tikzset{every picture/.style={line width=0.45pt}} %set default line width to 0.75pt        

\begin{tikzpicture}[scale=0.4,x=0.75pt,y=0.75pt,yscale=-1,xscale=1]
%uncomment if require: \path (0,539); %set diagram left start at 0, and has height of 539

%Straight Lines [id:da6164183201408917] 
\draw [color={rgb, 255:red, 155; green, 155; blue, 155 }  ,draw opacity=1 ]   (250,170) -- (440,0) ;
%Straight Lines [id:da10513537560446617] 
\draw [color={rgb, 255:red, 155; green, 155; blue, 155 }  ,draw opacity=1 ]   (260,180) -- (460,0) ;
%Straight Lines [id:da5072668538716969] 
\draw [color={rgb, 255:red, 155; green, 155; blue, 155 }  ,draw opacity=1 ]   (270,190) -- (480,0) ;
%Straight Lines [id:da12526182632750404] 
\draw [color={rgb, 255:red, 155; green, 155; blue, 155 }  ,draw opacity=1 ]   (280,200) -- (500,0) ;
%Straight Lines [id:da24613815358878421] 
\draw [color={rgb, 255:red, 155; green, 155; blue, 155 }  ,draw opacity=1 ]   (290,210) -- (520,0) ;
%Straight Lines [id:da7281289018781257] 
\draw [color={rgb, 255:red, 155; green, 155; blue, 155 }  ,draw opacity=1 ]   (300,220) -- (540,0) ;
%Straight Lines [id:da9721215358303743] 
\draw [color={rgb, 255:red, 155; green, 155; blue, 155 }  ,draw opacity=1 ]   (310,230) -- (560,0) ;
%Straight Lines [id:da32014088496234305] 
\draw [color={rgb, 255:red, 155; green, 155; blue, 155 }  ,draw opacity=1 ]   (320,240) -- (580,0) ;
%Straight Lines [id:da8928901230635273] 
\draw [color={rgb, 255:red, 155; green, 155; blue, 155 }  ,draw opacity=1 ]   (330,250) -- (600,0) ;
%Straight Lines [id:da4547847799246547] 
\draw [color={rgb, 255:red, 155; green, 155; blue, 155 }  ,draw opacity=1 ]   (250,350) -- (620,0) ;
%Straight Lines [id:da21987732825304462] 
\draw [color={rgb, 255:red, 155; green, 155; blue, 155 }  ,draw opacity=1 ]   (260,360) -- (640,0) ;
%Straight Lines [id:da3537608093251172] 
\draw [color={rgb, 255:red, 155; green, 155; blue, 155 }  ,draw opacity=1 ]   (270,370) -- (660,0) ;
%Straight Lines [id:da6589709782181952] 
\draw [color={rgb, 255:red, 155; green, 155; blue, 155 }  ,draw opacity=1 ]   (280,380) -- (660,20) ;
%Straight Lines [id:da7447762041371727] 
\draw [color={rgb, 255:red, 155; green, 155; blue, 155 }  ,draw opacity=1 ]   (290,390) -- (660,40) ;
%Straight Lines [id:da8255898035304922] 
\draw [color={rgb, 255:red, 155; green, 155; blue, 155 }  ,draw opacity=1 ]   (300,400) -- (660,60) ;
%Straight Lines [id:da704507653649542] 
\draw [color={rgb, 255:red, 155; green, 155; blue, 155 }  ,draw opacity=1 ]   (310,410) -- (660,80) ;
%Straight Lines [id:da41177110825417507] 
\draw [color={rgb, 255:red, 155; green, 155; blue, 155 }  ,draw opacity=1 ]   (320,420) -- (660,100) ;
%Straight Lines [id:da8180458653129276] 
\draw [color={rgb, 255:red, 155; green, 155; blue, 155 }  ,draw opacity=1 ]   (330,430) -- (660,120) ;
%Straight Lines [id:da8622043876548691] 
\draw [color={rgb, 255:red, 155; green, 155; blue, 155 }  ,draw opacity=1 ]   (340,440) -- (660,140) ;
%Straight Lines [id:da8521963347622227] 
\draw [color={rgb, 255:red, 155; green, 155; blue, 155 }  ,draw opacity=1 ]   (350,450) -- (660,160) ;
%Straight Lines [id:da8781067580753334] 
\draw [color={rgb, 255:red, 155; green, 155; blue, 155 }  ,draw opacity=1 ]   (360,460) -- (660,180) ;
%Straight Lines [id:da3781601925385999] 
\draw [color={rgb, 255:red, 155; green, 155; blue, 155 }  ,draw opacity=1 ]   (370,470) -- (660,200) ;
%Straight Lines [id:da9770700452937056] 
\draw [color={rgb, 255:red, 155; green, 155; blue, 155 }  ,draw opacity=1 ]   (380,480) -- (660,220) ;
%Straight Lines [id:da5503348463115195] 
\draw [color={rgb, 255:red, 155; green, 155; blue, 155 }  ,draw opacity=1 ]   (390,490) -- (660,240) ;
%Straight Lines [id:da8657995733718264] 
\draw [color={rgb, 255:red, 155; green, 155; blue, 155 }  ,draw opacity=1 ]   (400,500) -- (660,260) ;
%Straight Lines [id:da6078910906401698] 
\draw [color={rgb, 255:red, 155; green, 155; blue, 155 }  ,draw opacity=1 ]   (410,510) -- (660,280) ;
%Straight Lines [id:da006498615893556603] 
\draw [color={rgb, 255:red, 155; green, 155; blue, 155 }  ,draw opacity=1 ]   (420,520) -- (660,300) ;
%Straight Lines [id:da7565857006801302] 
\draw [color={rgb, 255:red, 155; green, 155; blue, 155 }  ,draw opacity=1 ]   (440,520) -- (660,320) ;
%Straight Lines [id:da21903547303408621] 
\draw [color={rgb, 255:red, 155; green, 155; blue, 155 }  ,draw opacity=1 ]   (460,520) -- (660,340) ;
%Straight Lines [id:da12280220338390047] 
\draw [color={rgb, 255:red, 155; green, 155; blue, 155 }  ,draw opacity=1 ]   (480,520) -- (660,360) ;
%Straight Lines [id:da3292589837640829] 
\draw [color={rgb, 255:red, 155; green, 155; blue, 155 }  ,draw opacity=1 ]   (500,520) -- (660,380) ;
%Straight Lines [id:da9423873648370615] 
\draw [color={rgb, 255:red, 155; green, 155; blue, 155 }  ,draw opacity=1 ]   (520,520) -- (660,400) ;
%Straight Lines [id:da6633670094954679] 
\draw [color={rgb, 255:red, 155; green, 155; blue, 155 }  ,draw opacity=1 ]   (540,520) -- (660,420) ;
%Straight Lines [id:da8939464009932518] 
\draw [color={rgb, 255:red, 155; green, 155; blue, 155 }  ,draw opacity=1 ]   (560,520) -- (660,440) ;
%Straight Lines [id:da7635696067894008] 
\draw [color={rgb, 255:red, 155; green, 155; blue, 155 }  ,draw opacity=1 ]   (580,520) -- (660,460) ;
%Straight Lines [id:da8699303314892972] 
\draw [color={rgb, 255:red, 155; green, 155; blue, 155 }  ,draw opacity=1 ]   (605,520) -- (660,480) ;
%Straight Lines [id:da8144355989099584] % Extremo
\draw [color={rgb, 255:red, 155; green, 155; blue, 155 }  ,draw opacity=1 ]   (630,520) -- (660,500) ;
%Shape: Axis 2D [id:dp21064666550375022] 
\draw  (150,250) -- (650,250)(330,10) -- (330,500) (643,245) -- (650,250) -- (643,255) (325,17) -- (330,10) -- (335,17)  ;
%Straight Lines [id:da08285664384672031] 
\draw [line width=2.25]    (240,160) -- (330,250) ;
%Straight Lines [id:da02472931755585772] 
\draw [line width=2.25]    (240,160) -- (420,0) ;
%Straight Lines [id:da9371743469659536] 
\draw [line width=2.25]    (240,340) -- (330,250) ;
%Straight Lines [id:da3173904434500737] 
\draw [line width=2.25]    (240,340) -- (420,520) ;
%Shape: Arc [id:dp6003852621026782] 
\draw  [draw opacity=0][dash pattern={on 5.63pt off 4.5pt}][line width=1.5]  (308.97,228.61) .. controls (314.38,223.28) and (321.81,220) .. (330,220) .. controls (346.57,220) and (360,233.43) .. (360,250) .. controls (360,266.57) and (346.57,280) .. (330,280) .. controls (321.98,280) and (314.7,276.86) .. (309.32,271.73) -- (330,250) -- cycle ; \draw [dash pattern={on 5.63pt off 4.5pt}][line width=1.5]  [dash pattern={on 5.63pt off 4.5pt}]  (312.08,225.94) .. controls (317.08,222.21) and (323.28,220) .. (330,220) .. controls (346.57,220) and (360,233.43) .. (360,250) .. controls (360,266.57) and (346.57,280) .. (330,280) .. controls (321.98,280) and (314.7,276.86) .. (309.32,271.73) ;  \draw [shift={(308.97,228.61)}, rotate = 326.98] [fill={rgb, 255:red, 0; green, 0; blue, 0 }  ][dash pattern={on 3.49pt off 4.5pt}][line width=0.08]  [draw opacity=0] (13.4,-6.43) -- (0,0) -- (13.4,6.44) -- (8.9,0) -- cycle    ;

% Text Node
\draw (351,202.4) node [anchor=north west][inner sep=0.75pt]    {$-\frac{\pi }{a}$};
% Text Node
\draw (180,152.4) node [anchor=north west][inner sep=0.75pt]    {$k( i)$};
% Text Node
\draw (150,332.4) node [anchor=north west][inner sep=0.75pt]    {$k( -i)$};
% Text Node
\draw (616,262.4) node [anchor=north west][inner sep=0.75pt]    {$\Re ( z)$};
% Text Node
\draw (255,13.4) node [anchor=north west][inner sep=0.75pt]    {$\Im ( z)$};

\end{tikzpicture}
        \caption{Case $a \in (-1,-1/2)$}
        \label{fig:P_apt-12}
    \end{subfigure}
    \caption{Example of polygons $P$ for $a=-1/2$ and $a \in (-1,-1/2)$. When $a=-1/2$, $P$ covers the entire right half plane $\rightplane$, but this is not the case when $a<-1/2$.}
    \label{fig:polygons}
\end{figure}
    
    In case \ref{item:case-12}, for any $r$ in $(0,\abs{k(i)})$ and any $\theta$ in $(\pi,2\pi)$, \Cref{sc-mapping} shows that $k^{-1}$ is holomorphic in $\rightplane \cup \Theta_0(r,\theta)$, as defined in \Cref{analicity}.
    
    In case \ref{item:case-1-12}, if $r(a)\in (0,\abs{k(i)})$, and $\theta(a)\in (0,-\pi/a)$, \Cref{sc-mapping} does guarantee the analyticity in $\Theta_0(r(a), \theta(a))$, but not in $\rightplane$. To extend $k^{-1}$ analytically to the whole half-plane, take $k(z) = \phi(iz)$ with $\phi$ a conformal mapping between the upper half-plane and $P$ as in \Cref{sc}.  Since $\phi^{-1}$ is real valued and continuous at the border of $P$, except possibly at the vertices $0,k(i),k(-i)$, it is possible to apply the Schwarz reflection principle to extend $\phi$ analytically to the reflection of any subset of $P$ with respect to one of the sides (see for example \cite[page 298]{lang2013complex}).
    
    The whole right plane can be covered after a finite number of reflections of $P$ along its sides. Let $S_1, S_2, R_1, R_2$ be defined as in \Cref{sc}. A reflection of $P$ along $R_1$ results in $P_1 = \gamma(P)$ with $\gamma$ a rotation with respect to a fixed point $z_0$. Since the angle between $R_1$ and $R_2$ is $\left( -\frac1 a - 1  \right)\pi$, the angle between $\R$ and $R_2'$, the external side of the reflection, will be $3\left( -\frac1 a - 1  \right)\frac{\pi}2$. If $a \in [-3/4,-1/2)$, this angle will be greater or equal than $\frac\pi2$. In this case, a single reflection along $R_1$ allows one to extend analytically to the upper right quarter-plane. By symmetry, the same procedure can be done for the lower right quarter-plane. If $a \in (-1,-3/4)$, a single reflection of $P$ is not enough to cover the whole $\rightplane$ (see \Cref{fig:rotation}), but the procedure can be repeated along $R_1'$ and $R_2'$ the reflected sides. To cover the entire upper right quarter-plane $n = \lceil -\frac{1+2a}{2+2a} \rceil$ copies of $P$ are necessary. If they are named successively as $P_1, P_2, \cdots, P_n$, they correspond to consequent rotations:
    
    \begin{align*}
        P_j = \underbrace{\gamma(\gamma( \cdots \gamma(\gamma}_\text{$j$ times}(P) ))).
    \end{align*}
    To extend $\phi^{-1}$ analytically, define for $z \in P_j$
    \begin{align*}
        \phi^{-1}(z) = \left\{ \begin{array}{cc}
            \overline{\phi^{-1}(z)} & \text{if $j$ odd,} \\
            \phi^{-1}(z) & \text{if $j$ even.}
        \end{array} \right.
    \end{align*}
    
    \begin{figure}[!h]
        \centering
        % PARAMETERS
\newcommand{\axeslen}{5cm}
\newcommand{\xdir}{-0.1cm}
\newcommand{\ydir}{0.5cm}

% For a sector opening to the left, center it around 180°.
\begin{tikzpicture}[>=latex, thick, scale=0.5]%[scale=0.50]

% unified color fill
\definecolor{fillcolor}{RGB}{220,220,255}

% Polygon
\coordinate [] (O) at (0, 0);
\coordinate [label={ left:$\phi(1)$}] (ki) at (\xdir, \ydir);
\coordinate [label={ left:$\phi(-1)$}] (k_i) at (\xdir, -\ydir);
\coordinate [] (cor1) at (2*\axeslen, {2.52} );
\coordinate [] (cor2) at (2*\axeslen, {-2.52} );
\draw [very thick] (cor1) -- (ki) -- (O) -- (k_i) -- (cor2);
\fill[pattern={Lines[angle=45,distance={4pt},line width={0.01mm}]},pattern color=gray] (cor1) -- (ki) -- (O) -- (k_i) -- (cor2) -- (cor1);

% Axes only
\draw[-to] (-0.5*\axeslen,0) -- (2*\axeslen+0.3,0);
\draw[-to] (0,-0.45*\axeslen) -- (0,1.5*\axeslen+0.3);

% First rotation
\pgfexttransformmirror{\pgfpointxy{-0.1}{0.5}}{\pgfpointxy{10}{2.52}}
\draw[] (10, 2.52) -- (-0.1, 0.5) -- (0,0) -- (-0.1, -0.5) -- (10, -2.52);

% Second rotation
\pgfexttransformmirror{\pgfpointxy{-0.1}{-0.5}}{\pgfpointxy{10}{-2.52}}
\draw[] (6, 1.72) -- (-0.1, 0.5) -- (0,0) -- (-0.1, -0.5) -- (10, -2.52);

% Undo rotations 
\pgfexttransformmirror{\pgfpointxy{-0.1}{-0.5}}{\pgfpointxy{10}{-2.52}}
\pgfexttransformmirror{\pgfpointxy{-0.1}{0.5}}{\pgfpointxy{10}{2.52}}

% Annotations
\draw (-1, \axeslen) node [anchor=north west][inner sep=0.75pt] {$\Im(z)$};
\draw (2*\axeslen,-0.5) node [anchor=south west][inner sep=0.75pt] {$\Re(z)$};
\draw (4,2.5) node [anchor=south west][inner sep=0.75pt] {$P_1$};
\draw (0.35*\axeslen,4) node [anchor=south west][inner sep=0.75pt] {$P_2$};
\draw (5,0.5) node [][inner sep=0.75pt] {$P$};

\end{tikzpicture}
        \caption{Two successive reflections of $P$ along the upper side for $a\in (-1,-3/4)$. We define the value of $\phi^{-1}$ to be equal to $\overline{\phi^{-1}(\gamma^{-1}(z))}$ in $P_1$, and $\phi^{-1}(\gamma^{-1}(z))$ in $P_2$.}
        \label{fig:rotation}
    \end{figure}
    
    Since $\phi^{-1}(z)$ is real valued at the border of $P$, its extension to each copy $P_j$ will be real valued at the border of $P_j$, and the Schwarz reflection principle assures that this extension is analytic in $\bigcup_j P_j$. Finally, the evaluation $k^{-1}(z) = -i\phi^{-1}(z)$ yields the analyticity of $k^{-1}$ in the extended domain.

\end{proof}

\begin{proof}[Proof of \Cref{result}]
The proof for the cases $a\in \{1,0,-1\}$ is immediate from \cref{eq:zero_and_one,eq:minus_one}.  

By \Cref{analicity}, for every $(a,x) \in \left((-1,1)\setminus \{0\}\right) \times \R_+$, the  function $A(k^{-1}(z))$ is holomorphic in the domain  $\Delta(\epsilon(a,x),\theta(a,x)) \eqqcolon \Delta_a$  and it is possible to apply \Cref{flajolet} to estimate its coefficients with its behaviour around the singularity at $0$. 

Let $a\in(-1,0)$. Taking any path going from 0 to $z$ and staying into $\Delta_a$, as $z\to0$:

\begin{align*}
    k(z) &= -\frac1a \int_0^z (s^2+1)^{-\frac12}s^{-1-\frac1a}\mathrm d s \sim -\frac1a\int_0^z s^{-1-\frac1a} \d s = z^{-\frac1a}.
\end{align*}
This approximation yields the estimation $k^{-1}(z) \sim z^{-a}$, as $z\to0$.

In the function $A(t)=\left( (1-2q)t + \sqrt{1+t^2} \right)t^\frac1a$, the term $(1-2q)t + \sqrt{1+t^2}$ is continuous at $t=0$, so as $t\to 0$, $(2q+1)t + \sqrt{1+t^2}\sim1$, while $t^{\frac1a} \to \infty$ as $t\to0$. By definition of $k$, $k^{-1}(0) = k^{-1}(k(0)) = 0$, so 0 is a singular point of $A(k^{-1}(z))$ and around this point

\begin{equation*}
    A(k^{-1}(z)) \sim \left( k^{-1}(z) \right)^\frac1a.
\end{equation*}
By the previous approximations, it follows that near $0$,
\begin{equation*}
     \left(k^{-1}(z)\right)^\frac1a \sim z^{-1}.
\end{equation*}
Since $L_a(x,z) = \sinh^{-\frac1a}(x) A(k^{-1}(k(\sinh(x)) + z\sinh^{-\frac1a}(x) ))$, then as $z \to k(\sinh(x)) \sinh^{\frac1a}(x)$
\begin{equation*}
    L_a(x,z) \sim \frac{\sinh^{-\frac1a}(x)}{k(\sinh(x))-z\sinh^{-\frac1a}(x)} =  \frac{\sinh^{-\frac1a}(x)}{k(\sinh(x))}\left( 1 - \frac{z\sinh^{-\frac1a}(x)}{k(\sinh(x))} \right)^{-1}.
\end{equation*}
Due to this behavior near the singularity, it is possible to use \Cref{flajolet} to conclude that 
\begin{equation*}
    [z^n]L_a(x,z) \sim \frac{\sinh^{\frac1a}(x)}{k(\sinh(x))}[z^n]\left( 1 - \frac{z\sinh^{-\frac1a}(x)}{k(\sinh(x))} \right)^{-1}, \qquad n\to\infty
\end{equation*}
and the right-hand satisfies
\begin{align*}
    [z^n]\left( 1 - \frac{z \sinh^{-\frac1a}(x)}{k(\sinh(x))} \right)^{-1} &= [z^n]\sum_{j=0}^{\infty} \left(\frac{z\sinh^{-\frac1a}}{k(\sinh(x))}\right)^j = \left(\frac{\sinh^{-\frac1a}(x)}{k(\sinh(x))}\right)^n.
\end{align*}
Which implies that, as $n \to \infty$
\begin{equation*}
    \mathcal L_{n+1}(a;x) = [z^n]L(x,z) \sim \frac{\sinh^{-\frac1a}(x)}{k(\sinh x)} \left( \frac{\sinh^{-\frac1a}(x)}{k(\sinh x)} \right)^n = \left( \frac{\sinh^{-\frac1a}(x)}{k(\sinh x)} \right)^{n+1}.
\end{equation*}

When $a>0$, $k$ is strictly decreasing in $(0,\infty)$. As $z$ goes to $\infty$,

\begin{align*}
     k(z) &= \frac1a \int_z^\infty (s^2+1)^{-\frac12}s^{-1-\frac1a}\mathrm d s = \frac1a\int_z^\infty (s^{-2}+1)^{-\frac12}s^{-2-\frac1a}\d s  \sim \frac{z^{-1-\frac1a}}{a+1},
\end{align*}
so the behavior of $k^{-1}$ near the origin is
\begin{align*}
     k^{-1}(z) &\sim (a+1)^{- \frac{a}{a+1}}z^{-\frac{a}{a+1}}, \qquad z \to 0.
\end{align*}
Since $-a/(a+1) <0$, in this case $k^{-1}(z) \to \infty$ as $z\to 0$, so
\begin{align*}
     A(k^{-1}(z)) &= \left( (1-2q) k^{-1}(z) + k^{-1}(z)\sqrt{1+(k^{-1}(z))^{-2}} \right)(k^{-1}(z))^\frac{1}{a}\\
     & \sim \left( (2-2q) k^{-1}(z) \right)(k^{-1}(z))^\frac{1}{a} \sim  2(1-q)\left( [(a+1)z]^{-\frac{a}{a+1}} \right)^{1+\frac1a} = \frac{2(1-q)}{a+1}z^{-1}, \quad z \to 0.
\end{align*}

Finally, by \Cref{flajolet}, when $z \to k(\sinh(x))\sinh^\frac1a(x)$,
\begin{align*}
    L_a(x,z) = \frac{A(k^{-1}( k(\sinh(x)) - z\sinh^{-\frac1a}(x) ))}{\sinh^{\frac1a}(x) } &\sim \frac{2(1-q)}{(a+1)\sinh^{\frac1a}(x)}\left( k(\sinh(x)) - z\sinh^{-\frac1a}(x) \right)^{-1}\\
    &=  \frac{2(1-q)}{a+1}\frac{\sinh^{-\frac1a(x)}}{k(\sinh(z))} \left( 1 - \frac{z\sinh^{-\frac1a}(x)}{k(\sinh(x))} \right)^{-1}.
\end{align*}

    And the estimate for the coefficients yields
\begin{align*}
    [z^n] L_a(x,z) &\sim  \frac{2(1-q)\sinh^{-\frac1a}(x)}{(a+1)k(\sinh(z))} \left( \frac{\sinh^{-\frac1a}(x)}{k(\sinh(x))} \right)^n \\
    &= \frac{2(1-q)}{a+1}\left( \frac{\sinh^{-\frac1a}(x)}{k(\sinh(x))} \right)^{n+1}.
\end{align*}

Finally, by \cref{eq:even} it is possible to replace $q$ by $1-q$ to find the estimates for $x<0$.
\end{proof}

\section{Further properties of the exponential growth and large deviations}\label{last_subsection}

This final subsection studies properties of the function $\rayon$ and derives the \gls{ldp} from the asymptotic behavior of the Laplace transform. The resulting \gls{ldp} coincides with that obtained in \cite{franchini2023large}.

 Remark that it is possible to find explicit expressions of $\rayon$ for particular values of $a$. Integration by parts yields a recursive formula for $\int \sinh^{-1-\frac1a}(s)\d s$:
 \begin{align*}
     \int \sinh^{-1-\frac1a}(s)\d s &= \frac{-a\sinh^{-2-\frac1a}(s)\cosh s}{a+1} - \frac{2a+1}{a+1}\int \sinh^{-\frac1a-3}(s) \d s.
 \end{align*}
 Adding the integration limits in \cref{eq:definition_fgd} the explicit forms of $\rayon = \frac1{\phi(a;x)}$ when $-1-\frac1a\in \mathbb Z$ follow from \cite[Section 2.4]{gradshteyn2014table}:
\begin{align*}
    \rho_{-1/(2m+1)}(x) &= \frac{2m+1}{\sinh^{2m+1}(\abs{x})} \left[ (-1)^m\binom{2m}{m}\frac{\abs{x}}{2^{2m}} + \frac{1}{2^{2m-1}}\sum_{k=0}^{m-1} (-1)^k \binom{2m}{k}\frac{\sinh\left( 2\abs{x}(m-k) \right)}{2(m-k)} \right],\\
    \rho_{-1/(2m+2)}(x) &= \frac{2m+2}{\sinh^{2m+2}(\abs{x})}\left[ (-1)^m \sum_{k=0}^m (-1)^k \binom{m}{k} \frac{\cosh^{2k+1}(x)}{2k+1} - (-1)^m \sum_{k=0}^m  \binom{m}{k} \frac{(-1)^k}{2k+1} \right],\\
    \rho_{1/(2m-1)}(x)  &= \cosh (\abs{x})\left[ 1 + \sum_{k=1}^{m-1} (-1)^{k} \frac{2^k (m-1)(m-2) \cdots (m-k)}{(2m-3)(2m-5)\cdots (2m-2k-1)} \sinh^{2k-1}(\abs{x}) + C(m) \right], \\
    \rho_{1/(2m)}(x)    &= \cosh (\abs{x})\left[ 1 + \sum_{k=1}^{m-1} (-1)^{k} \frac{(2m-1)(2m-3)\cdots (2m-2k+1)}{2^k (m-1)(m-2) \cdots (m-k)}\sinh^{2k}(x) \right]\\
    &\phantom{ooooo} + 2m\sinh^{2m}(x)\left[(-1)^{m-1}\frac{(2m-1)!!}{(2m)!!} \ln \tanh \left( \frac {\abs{x}} 2 \right) \right].
\end{align*}
The constant $C(m)$ is equal to $1$ if $m=1$ and 0 in any other case. In particular, explicit forms of $\rayon$ for some rational numbers are shown in \cref{tab:explicit_values}. The plots of these functions are also shown in \Cref{plots}. 

\begin{table}[!h] \caption{Explicit form of $\rayon$ for particular values of $a$.\label{tab:explicit_values}} 
\begin{tabular}{@{}ll@{}}
\toprule
Value of $a$ & $\rayon$ \\ \midrule
  -1 &  $x/\sinh(x)$ \\
 -1/2 & $2\left( \cosh (x) -1 \right)/\sinh^2(x)$ \\
  -1/3 & $3\sinh^{-3}(x)\left( - x/2 + \frac14\sinh(2x) \right)$ \\
 1/3 & $2 \sinh^3(\abs{x})+\cosh(x)-2 \sinh^{2}(x) \cosh(x)$ \\
 1/2 & $\cosh (x) + \sinh^2(x)\ln \tanh \left(  \left\lvert  x/2 \right\rvert \right)$ \\
 1 & $e^{-\lvert x \rvert}$ \\
  \bottomrule
\end{tabular}
\end{table}

\begin{figure}[!ht]
    \def\mathdefault#1{#1}\everymath=\expandafter{\the\everymath\displaystyle}
    \input{figures/ordered_plots.pgf}
    \caption{Plots of $\rayon$ for different values of $a$. For positive values of $a$, $\rayon <1/\cosh(x)$, while  ) for different values offor negative values $\rayon > 1/\cosh(x)$.}
    \label{plots}
\end{figure}

From both \Cref{tab:explicit_values} and \Cref{plots} it is possible to see that in the degenerate case $a=1$, $\rayon$ is not derivable in zero. For every other $a \in [-1,1)\setminus \{0\}$, although it is at least derivable at $0$, it is not always analytic.

\begin{proposition} \label{analyticity}
The function $x \mapsto \rayon$ is analytic at $0$ if $a \in [-1,0]$ and non-analytic if $a \in (0,1]$. 
\end{proposition}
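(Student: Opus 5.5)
The plan is to work directly with $\rayon = 1/\fgd$. Since $\fgd>0$, analyticity of $\rho_a$ at $0$ is equivalent to analyticity of $\phi(a;\cdot)$ there. The cases $a=0$ (where $\rho_0=1/\cosh x$), $a=-1$ (where $\rho=x/\sinh x$) and $a=1$ (where $\rho=e^{-|x|}$) can be read off directly from \Cref{tab:explicit_values}: the first two are even entire functions, and the third has a corner at $0$. For the remaining values, write $\rho_a(x)=\sinh^{\frac1a}(\abs{x})\,I_a(\abs{x})$. Here
\[
I_a(y)=-\tfrac1a\int_0^y \sinh^{-1-\frac1a}(s)\,\d s \quad (a<0),
\qquad
I_a(y)=\tfrac1a\int_y^\infty \sinh^{-1-\frac1a}(s)\,\d s \quad (a>0).
\]
Put $\beta=-1/a$.

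For $a\in(-1,0)$ we have $\beta>1$, and I expand $\sinh^{\beta-1}(s)=s^{\beta-1}\psi(s^2)$, where $\psi$ is analytic near $0$ and $\psi(0)=1$. Integrating term by term gives $I_a(y)=\beta\, y^{\beta}\,\Psi(y^2)$, with $\Psi$ analytic and $\Psi(0)=1/\beta$. Similarly $\sinh^{-\beta}(y)=y^{-\beta}\chi(y^2)$ with $\chi$ analytic. The fractional powers $y^{\pm\beta}$ cancel, so $\rho_a(y)=\beta\,\chi(y^2)\Psi(y^2)$ for $y>0$. This is an analytic function of $y^2=x^2$, hence of $x$, which proves the claim for $a\in[-1,0]$.

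For $a\in(0,1)$ the integrand $\sinh^{-1-\frac1a}(s)$ is not integrable at $0$, and this is exactly where non-analyticity arises. Set $\gamma=1+\frac1a>2$ and expand $\sinh^{-\gamma}(s)=s^{-\gamma}\sum_{j\ge0}c_js^{2j}$ with $c_0=1$. Split the integral as $\int_y^\infty=\int_y^1+\int_1^\infty$. Integrating term by term on $[y,1]$ gives
\[
aI_a(y)=K+\sum_{j}\frac{c_j\,y^{2j-\gamma+1}}{\gamma-1-2j},
\]
with an extra term $-c_{j_0}\ln y$ when $2j_0=\gamma-1=1/a$, i.e.\ when $1/a$ is an even integer. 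The constant $K$ collects $\int_1^\infty$ and the contributions of the upper limit $s=1$. Multiplying by $\sinh^{\frac1a}(y)=y^{\frac1a}\chi(y^2)$, the series part becomes $y^{1/a}\cdot y^{-1/a}\cdot(\text{analytic in }y^2)$, which is analytic. What remains is the term $K\,y^{1/a}\chi(y^2)$, or in the even-integer case the term $-c_{j_0}y^{1/a}\chi(y^2)\ln y$, with the same constant absorbed.

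So, with $y=\abs{x}$,
\[
\rho_a(x)=E(x^2)+\kappa\,\abs{x}^{1/a}\,\chi(x^2)\quad\text{or}\quad
\rho_a(x)=E(x^2)+\kappa'\,x^{1/a}\ln\abs{x}\,\chi(x^2),
\]
where $E$ is analytic. If $1/a$ is an even integer, the logarithmic term immediately rules out analyticity, provided $\kappa'=c_{j_0}/a\neq0$. For that I need $c_{j_0}\neq0$, where $c_{j_0}$ is the relevant coefficient of $(\sinh s/s)^{-\gamma}$. If $1/a$ is not an even integer, $\abs{x}^{1/a}$ is not analytic at $0$: for odd integer $1/a$ it is $\abs{x}$ times a polynomial, and for non-integer $1/a$ it is a genuinely fractional power. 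Hence $\rho_a$ fails to be analytic as soon as $\kappa\neq0$.

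The main obstacle is therefore proving that the coefficient is nonzero. I would first express $\kappa$ as a regularized (Hadamard finite-part) integral, namely the analytic continuation in $\gamma$ of $\frac1a\int_0^\infty \sinh^{-\gamma}(s)\,\d s$. Via $t=e^{-2s}$ this equals
\[
\frac{2^{\gamma-1}}{a}\,B\!\left(\tfrac{\gamma}{2},1-\gamma\right)=\frac{2^{\gamma-1}}{a}\,\frac{\Gamma(\frac{\gamma}{2})\Gamma(1-\gamma)}{\Gamma(1-\frac{\gamma}{2})}.
\]
This is a ratio of Gamma functions, nonzero whenever $\gamma$ is not an integer. It has poles exactly at $1/a$ even, which correspond to the log case. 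For odd integer $1/a$ the continuation must be taken carefully: there $\gamma$ is an even integer, so $\Gamma(1-\frac{\gamma}{2})$ and $\Gamma(1-\gamma)$ both have poles, and the ratio is a finite nonzero limit. As a fallback for $a=1/(2m-1)$, the explicit formula for $\rho_{1/(2m-1)}$ shows an odd power $\sinh^{2k-1}(\abs{x})$, and in particular $\rho_{1/3}$ in \Cref{tab:explicit_values} contains $2\sinh^3\abs{x}$, which is visibly non-analytic.
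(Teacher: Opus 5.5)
Your proof is correct. For $a\in(0,1)$, however, it proves the key non-vanishing by a different route from the paper.

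For $a<0$ you follow the paper. The paper reads off $\rayon={}_2F_1\bigl[\tfrac12,-\tfrac1{2a};1-\tfrac1{2a};-\sinh^2 x\bigr]$ from the hypergeometric form \eqref{eq:hypergeometric} of $k$. That is your cancellation of $y^{\pm\beta}$, carried out in the variable $\sinh\abs{x}$ instead of $\abs{x}$.

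For $a>0$ the paper again uses $t=\sinh\abs{x}$ and writes $\rayon=t^{1/a}k(t)$. The integrand is then $s^{-1-1/a}(1+s^2)^{-1/2}$, whose coefficients $\binom{-1/2}{n}$ are explicit, so the logarithmic coefficient is visibly $\binom{-1/2}{1/(2a)}\neq0$. The constant $C(a)$ in front of $\sinh^{1/a}\abs{x}$ is your $\kappa$. The paper shows it is nonzero by a soft argument: the Taylor remainder of $(1+u)^{-1/2}$ has constant sign on $(0,\infty)$, so $C(a)$ is $\frac1a$ times the integral of a one-signed function. Its value is never computed.

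You expand in $s$ itself, where the $c_j$ are not explicit, and compensate by evaluating the constant exactly through the meromorphic continuation of a Beta integral. This gives a closed form. By the duplication formula your expression equals $\kappa=\frac{1}{2a\sqrt{\pi}}\Gamma\bigl(\frac{a+1}{2a}\bigr)\Gamma\bigl(-\frac{1}{2a}\bigr)$. It gives $2$ at $a=\frac13$ and $-1$ at $a=1$, matching \Cref{tab:explicit_values}. It is manifestly finite and nonzero unless $1/a$ is even, so odd $1/a$ needs no separate limiting argument. The price is one justification: that $K(\gamma)=\int_1^\infty\sinh^{-\gamma}(s)\,\d s+\sum_j c_j(\gamma)/(2j+1-\gamma)$ is meromorphic in $\gamma$ (use Cauchy bounds on $c_j(\gamma)$ on a circle of radius in $(1,\pi)$), and that it coincides with $\int_0^\infty\sinh^{-\gamma}(s)\,\d s$ for $0<\gamma<1$.

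Two loose ends remain.

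First, in the logarithmic case you say you need $c_{j_0}\neq0$ but only allude to it through the poles at even $1/a$. Make it explicit. The residue of $K$ at $\gamma=2j_0+1$ is $-c_{j_0}$. At that point $\Gamma(1-\gamma)$ has a pole while $\Gamma(\gamma/2)$ and $\Gamma(1-\gamma/2)$ are finite, so $c_{j_0}\neq0$. Alternatively, Lagrange inversion for $w=\sinh s$ gives $c_{j_0}=[s^{2j_0}](s/\sinh s)^{2j_0+1}=\binom{-1/2}{j_0}$, which is exactly the paper's logarithmic coefficient.

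Second, drop the fallback based on the general formula for $\rho_{1/(2m-1)}$, because that formula does not check out: for $m=1$ it gives $2\cosh x$ instead of $e^{-\abs{x}}$. Only the table entry for $a=1/3$ is reliable, and your Gamma computation makes the fallback unnecessary anyway.
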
~Since $\rayon = 1/\phi(a;x)$, and none of the functions vanish for finite $x$, this result implies in particular \Cref{non_analytic}.

\begin{proof}
    Analyticity of $\rayon$ at $x=0$ for $a\in [-1,0)$ is immediate from \cref{eq:hypergeometric} and the fact that $\rayon = \sinh^{\frac1a}(\abs{x})k(\sinh( \abs{x}))$: 
\begin{align*}
     \rayon &= \frac{\sinh^{\frac1a}(\abs{x}) }{a} \sinh^{-\frac1a}(\abs{x}) {}_2 F_1\left[ 2,-\frac1{2a} ; 1 - \frac1{2a}; -\sinh^2(\abs{x}) \right]\\ &= \frac{ {}_2 F_1\left[ 2,-\frac1{2a} ; 1 - \frac1{2a}; -\sinh^2(x) \right] }a.
\end{align*}
To see the non analytic behavior at 0 in the case $a\in (0,1)$, let
\begin{align*}
    k(x) = \frac1a\int_x^\infty \frac{s^{-1-\frac1a}}{\sqrt{ 1 + s^2 }} \d s &= \frac1a\int_x^\infty s^{-1-\frac1a}\left( \sum_{n=0}^\infty \binom{-\frac12}{n} s^{2n} \right) \d s
\end{align*}
Take any $N\in \N$, as $x$ decreases to 0, $k$ can be approximated with the truncation of its Taylor polynomial up to an order $N$, i.e. $k(x) \sim \frac1a\int_x^\infty s^{-1-\frac1a}\sum_{n=0}^N \binom{-\frac12}{n} s^{2n}\d s$. This, in particular implies for $N = \left\lfloor \frac1{2a}\right\rfloor$
\begin{align*}
    \lim_{x\to 0^+} \left\lvert k(x) - \frac1a\int_x^\infty s^{-1-\frac1a}\sum_{n=0}^{\left\lfloor \frac1{2a}\right\rfloor} \binom{-\frac12}{n} s^{2n} \d s \right\rvert < \infty.
\end{align*}
Define
\begin{align*}
    C(a) \coloneqq \lim_{x\to 0^+} \left( k(x) - \frac1a\int_x^\infty s^{-1-\frac1a}\sum_{n=0}^{\left\lfloor \frac1{2a}\right\rfloor} \binom{-\frac12}{n} s^{2n} \d s \right) = \lim_{x\to0^+}\frac1a\int_x^\infty s^{-1-\frac1a}\sum_{ n = \left\lfloor \frac1{2a}\right\rfloor }^\infty \binom{-\frac12}{n} s^{2n} \d s.
\end{align*}
The coefficients in the series can be written as
\begin{align*}
    \binom{-\frac12}{n} = \frac{(-1/2)(-3/2)\cdots(-(2n-1)/2)}{n!} = (-1)^n \frac{(2n)!}{2^{2n} (n!)^2}.
\end{align*}
So they are alternating and monotonically decreasing, since
\begin{align*}
    \frac{\abs{ \binom{-1/2}{n} } }{ \abs{ \binom{-1/2}{n+1} } } = \frac{(2n)! 2^{2n + 2} (n+1)!^2 }{ 2^{2n} (n!)^2(2n+2)! } = \frac{4(n+1)^2}{(2n+1)(2n+2)} = \frac{2n+2}{2n+1}>1.
\end{align*}
By the alternating series estimation test, this means the Taylor remainder has a constant sign equal to the sign of the first neglected term. Write $g(s) = s^{-1-\frac1a}(1+s^2)^{-\frac12} - s^{-1-\frac1a}\sum_{n=0}^{ \left\lfloor 1/2a\right\rfloor } \binom{-1/2}{n} s^{2n}$. By the previous discussion, $g$ has a constant sign in $(0,\infty)$, so
\begin{align}
    C(a) = \frac1a\int_0^\infty g(s) \d s = \frac{\mathrm{sign}(g)}a\int_0^\infty s^{-1-\frac1a}\abs{g(s)} \d s. \label{definition_of_constant}
\end{align}
Thus, $C(a)\neq0$ since $s^{-1-\frac1a}\abs{g(s)} $ is non-negative and non-constant. Now let, for $0 < x < 1$,
\begin{align*}
    \int_{x}^\infty \frac{s^{-1-\frac1a}}{\sqrt{s^2+1}} \d s &= \int_x^1 \frac{s^{-1-\frac1a}}{\sqrt{s^2+1}} \d s + C_1(a).
\end{align*}
Suppose first that $1/(2a) \notin \mathbb Z$, then
\begin{align*}
    \int_x^1 \frac{s^{-1-\frac1a}}{\sqrt{s^2+1}} \d s &= \int_x^1 s^{-1-\frac1a}\left(\frac{1}{\sqrt{s^2+1}} - \sum_{n=0}^{ \left\lfloor \frac1{2a}\right\rfloor} \binom{-\frac12}{n} s^{2n}\right) \d s + C_2(a) + \sum_{n=0}^{\left\lfloor \frac1{2a}\right\rfloor} \binom{-\frac12}{n} \frac{ax^{2n-\frac1a}}{1-2na}.
\end{align*}
Take $C(a)$ as defined in \cref{definition_of_constant}. As $x$ tends to 0:
\begin{align*}
    k(x) \sim C(a) + \sum_{n=0}^{\left\lfloor \frac1{2a}\right\rfloor} \binom{-\frac12}{n} \frac{x^{2n-\frac1a}}{1-2na}.
\end{align*}

Now let
\begin{align*}
    \sinh^{\frac1a}(\abs{x}) &= \left( \sum_{n=0}^\infty \frac{ \abs{x}^{2n+1}}{(2n+1)!} \right)^\frac1a = \abs{x}^\frac1a \left( 1 + \sum_{n=1}^\infty \frac{\abs{x}^{2n}}{(2n+1)!} \right)^\frac1a = \abs{x}^{\frac1a}\left( 1 + c_2 x^2 + c_4 x^4 + \cdots \right),
\end{align*}
where $c_2 = \frac1{6a}$, and all of the $c_{2n}$ can be computed explicitly. As $x \to 0$
\begin{align*}
    \rayon &= \sinh^{\frac1a}(\abs{x})\left( C(a) + \sum_{n=0}^{\left\lfloor \frac1{2a}\right\rfloor} \binom{-\frac12}{n} \frac{\sinh^{2n-\frac1a}(\abs{x})}{1-2na} \right) \\
    &= C(a)\abs{x}^{\frac1a}\left( 1 + c_2 x^2 + c_4 x^4 + \cdots \right) + \sum_{n=0}^{ \left\lfloor \frac1{2a}\right\rfloor } \binom{-\frac12}{n}\frac{\sinh^{2n}(x)}{1-2na}.
\end{align*}
The expansion of $\rayon$ around zero has even terms of all orders less than $1/a$ and then a term $C(a)\abs{x}^{\frac1a}$. In the case that $1/(2a) \in \Z$:
\begin{align*}
    \int_x^1 \frac{s^{-1-\frac1a}}{\sqrt{s^2+1}} \d s &= C_3 + C_2(a) + \sum_{n=0}^{\left\lfloor \frac1{2a}\right\rfloor -1} \binom{-\frac12}{n} \frac{ax^{2n-\frac1a}}{1-2na} - \frac1a\binom{-\frac12}{\frac1{2a}}\ln(x).
\end{align*}
Then, near $0$
\begin{align*}
    \rayon &\sim \sinh^{\frac1a}(\abs{x})\left( C(a) + \sum_{n=0}^{\left\lfloor \frac1{2a}\right\rfloor -1} \binom{-\frac12}{n} \frac{\sinh(\abs{x})^{2n-\frac1a}}{1-2na} - \frac1a\binom{-\frac12}{\frac1{2a}}\ln(\sinh(\abs{x})) \right)\\
    &= C(a)x^{\frac1a}\left( 1 + c_2 x^2 + c_4 x^4 + \cdots \right) \\
    &\phantom{000000000}+ \sum_{n=0}^{ \left\lfloor \frac1{2a}\right\rfloor-1} \binom{-\frac12}{n}\frac{\sinh^{2n}(x)}{1-2na} -\frac1a\binom{-\frac12}{\frac1{2a}}\left( \ln \abs{x} + d_2x^2 + d_4x^4 + \cdots + \right)\sinh^{\frac1a}(x),
\end{align*}

for some $d_1, d_2, \dots$ In this case, the terms $\abs{x}^{\frac1a}$ are regular since $1/a$ is pair, but terms of the form $x^{2\ell+\frac1a}\ln \abs{x}$ appear in the expansion for every $\ell$ natural.
\end{proof}

\Cref{plots} suggests that the function $a \mapsto \rayon$ is monotone and that, for $a$ close to $0$, it is approximately $1/\cosh(x)$. This is indeed the case, and defining $\rho_0(x) \coloneqq 1/\cosh(x)$ ensures that $\rayon$ extends continuously to $a=0$.

\begin{proposition}
The function $a \mapsto \rayon$ is decreasing and it converges to $1/\cosh x$ when $a$ goes to 0.
\end{proposition}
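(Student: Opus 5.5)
The plan is to rewrite $\rho_a(x)=1/\phi(a;x)$, for every $a\in[-1,1]\setminus\{0\}$, as the expectation of one fixed decreasing function evaluated at a random point that depends monotonically on $a$. Both claims then follow from pointwise monotonicity and dominated convergence. Since $\phi(a;\cdot)$ depends on $x$ only through $\abs{x}$, I take $x>0$ throughout. The case $x=0$ can be handled by the limit $x\to0$, or ignored, since every formula then gives the constant $1$.

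\emph{Step 1: probabilistic representation for $a<0$.} Put $b=-1/a\ge 1$. From \eqref{eq:definition_fgd},
\begin{equation*}
\rho_a(x)=\frac{b\int_0^x \sinh^{b-1}(s)\,\d s}{\sinh^{b}(x)}.
\end{equation*}
I observe that
\begin{equation*}
\frac{\d}{\d s}\Big(\frac{\sinh s}{\sinh x}\Big)^{b}=\frac{b\sinh^{b-1}(s)\cosh(s)}{\sinh^b(x)},
\end{equation*}
so that $\rho_a(x)=\int_0^x \frac{1}{\cosh s}\,\d\mu(s)$. Here $\mu$ is the probability measure on $[0,x]$ with distribution function $(\sinh s/\sinh x)^b$. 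If $T\sim\mu$, then $Y=\log(\sinh T/\sinh x)$ satisfies $\mathbb P(Y\le -y)=e^{-by}$ for $y\ge0$. Hence $Y\overset{d}{=}aE$ with $E\sim\mathrm{Exp}(1)$.

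\emph{Step 2: probabilistic representation for $a>0$.} Put $c=1/a$. We have $\rho_a(x)=c\sinh^c(x)\int_x^\infty\sinh^{-1-c}(s)\,\d s$. Using
\begin{equation*}
\frac{\d}{\d s}\Big[-\Big(\frac{\sinh x}{\sinh s}\Big)^{c}\Big]=c\sinh^c(x)\cosh(s)\sinh^{-1-c}(s),
\end{equation*}
I get $\rho_a(x)=\int_x^\infty\frac{1}{\cosh s}\,\d\nu(s)$. Here $\nu$ is the probability measure on $[x,\infty)$ with survival function $(\sinh x/\sinh s)^c$. Again $Y=\log(\sinh T/\sinh x)$ satisfies $\mathbb P(Y>y)=e^{-cy}$, so $Y\overset{d}{=}aE$.

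Combining both cases, and realising everything on one probability space with a single $E\sim\mathrm{Exp}(1)$, I obtain
\begin{equation*}
\rho_a(x)=\mathbb E\left[\frac{1}{\cosh T_a}\right],\qquad T_a\coloneqq \arcsinh\big(\sinh(x)\,e^{aE}\big),\qquad a\in[-1,1]\setminus\{0\}.
\end{equation*}
The same formula also holds at $a=1$, where it gives $e^{-x}$, matching \Cref{tab:explicit_values}. Setting $a=0$ gives $T_0=x$ and $1/\cosh x=\rho_0(x)$.

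\emph{Step 3: conclusions.} For each fixed realisation of $E>0$, the map $a\mapsto T_a$ is strictly increasing, and $1/\cosh$ is strictly decreasing on $[0,\infty)$. Therefore $a\mapsto\rho_a(x)$ is (strictly) decreasing on all of $[-1,1]$, including across $a=0$. As $a\to0$ we have $T_a\to x$ almost surely, and the integrand is bounded by $1$. By dominated convergence, $\rho_a(x)\to1/\cosh x$.

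The only real work is Step 1–2: recognising the integrands as exact derivatives, so that the two formulas in \eqref{eq:definition_fgd} become integrals against probability measures. I expect the main obstacle to be the bookkeeping needed to verify that both measures have total mass one. For $a<0$ this uses $\sinh^b(0)=0$ since $b>0$. For $a>0$ it uses $(\sinh x/\sinh s)^c\to0$ as $s\to\infty$. I must also check that the identification $Y\overset{d}{=}aE$ carries the correct sign in each regime. Once this is done, monotonicity and the limit are immediate.
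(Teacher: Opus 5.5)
Your argument is correct, and it starts from the same change of variables as the paper. For $a<0$ the paper substitutes $u=\log(\sinh x/\sinh s)$ to get $\rho_a(x)=-\frac1a\int_0^\infty e^{u/a}\bigl(1+e^{-2u}\sinh^2 x\bigr)^{-1/2}\,\d u$, which becomes your $\mathbb E[1/\cosh T_a]$ after the rescaling $u=\abs{a}v$.

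The routes diverge in how monotonicity is extracted. The paper integrates by parts in $u$ and obtains \eqref{eq:continuity_a0}. This is $1/\cosh x$ plus a positive correction whose weight $e^{u(1/a+1)}$ is pointwise decreasing in $a$ and vanishes as $a\to0^-$. The paper then declares the case $a>0$ analogous. You instead rescale so that the law ($\mathrm{Exp}(1)$) no longer depends on $a$, and all $a$-dependence sits in the argument of the fixed decreasing function $1/\cosh$.

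Your coupling gives one formula for every $a\in[-1,1]$, including $a=0$. So strict monotonicity across $a=0$ and the limit follow at once, with the trivial bound $1$ as dominating function. In the paper, monotonicity across $a=0$ is left implicit, resting on the opposite signs of the correction terms for $a<0$ and $a>0$. The paper's integration by parts, in turn, exhibits $\rho_a(x)-1/\cosh x$ explicitly with a definite sign, and that identity is reused in the proof of \Cref{large_deviations}.
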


\begin{proof}
Let first $a<0$. By definition of $\rayon$:
\begin{align}
    \rayon = - \frac{\sinh^{\frac1a}(\abs{x})}{a}\int_0^{\abs{x}} &\sinh^{-1-\frac1a}(s) \d s =  -\frac1{a\sinh \abs{x}}\int_0^{\abs{x}} \left( \frac{\sinh (\abs{x})}{\sinh (s)} \right)^{1+\frac1a} \d s. \notag
    \intertext{Take $u = \log \left( \frac{\sinh (\abs{x})}{ \sinh s} \right)$, so that $\frac{\d u}{\d s} = -e^u\sqrt{\sinh^2(x)e^{-2u} + 1}/\sinh(\abs{x})$, and }
    \rayon &= -\frac{1}{a}\int_0^\infty \frac{e^{\frac u a} \d u}{\sqrt{ e^{-2u}\sinh^2(x) + 1 } }. \notag
    \intertext{Integration by parts in the last expression yields}
    \rayon&= \frac1{\cosh (x)} + \sinh^2(x)\int_0^\infty \frac{e^{u\left( \frac1a+1 \right)} \d u}{( e^{2u} + \sinh^2 (x) )^{\frac32}}. \label{eq:continuity_a0}
\end{align}

The last integral is decreasing in $a$ for $a\in (-1,0)$, and for these values, the integrand is dominated by the integrable function $1/( e^{2u} + \sinh^2 x )^{3/2}$. The Dominated Convergence Theorem yields the convergence as $a$ increases to $0$. The proof for $a>0$ is analogous.
\end{proof}

The final part of this section presents the proof of \Cref{large_deviations} and its equivalence to the result in \cite{franchini2015large,franchini2023large}.

\begin{proof}[Proof of \Cref{large_deviations}]
Suppose $a\in [-1,0)$. By \Cref{result}, when $n$ goes to $+\infty$
\begin{align*}
    \L{n} &= C(a,q) \phi(a;x)^n + o( \phi(a;x)^n ),
\intertext{where $C(a,q) = (1-q)/(a+1)$ if $a>0, x>0$, $C(a,q) = q/(a+1)$ if $a>0, x<0$ and $C(a,q)\equiv 1$ if $a<0$. Then}
   \frac{\L{n}}{C(a,q)\phi(a;x)^n} &= 1 + o(1), \qquad n \to \infty.
   \intertext{Taking logarithm and dividing by $n$:}
   \frac1n\log\L{n} - \frac1n\log C(a,q) - \log \phi(a;x) &= \frac1n\log\left( 1 + o(1) \right) = o(n^{-1}).
\end{align*}
Which implies $$\lim_{n\to \infty} \frac1n\log\L{n} = \log \phi(a;x) \eqqcolon \Lambda(a;x).$$ This function is finite for all $x \in \R$, by \cref{eq:continuity_a0}. Deriving $\log \phi(a;x)$ and using \cref{eq:continuity_a0} shows that $\lim_{x\to \infty}\abs{ \Lambda'(a;x) } = \infty$, so the Gärtner-Ellis Theorem (see, e.g. \cite[page 44]{DemboZeitouni}), asserts that the \gls{erw} satisfies a \gls{ldp} with good rate function $$\Lambda^*(a;x) = \sup_{t \ge 0} \left\{ xt - \Lambda(a;t) \right\}.$$
\end{proof}

Remarkably, when $\fgd$ is the Laplace transform of some probability measures, the \gls{ldp} satisfied by the \gls{erw} is exactly the same as the one satisfied by an i.i.d. sample of random variables with Laplace transform $\fgd$. This is the case for some particular values of $a$. When $a=-1$, $\fgd=\sinh(x)/x$ is the Laplace transform of a uniform random variable in $[-1,1]$, and $\Lambda^*(a;x)$ can be written in terms of the inverse Langevin function $\mathrm{L}(x)$, \[\Lambda^*(-1;x) = x \mathrm{L}^{-1}(x) - \log \left( \frac{ \sinh\left( \mathrm{L}^{-1}(x)\right) }{\mathrm{L}^{-1}(x)} \right),\]
while for $a=-\frac12$, $\fgd$ is the Laplace transform of the mean of two i.i.d. symmetric Rademacher random variables, i.e. it is the Laplace transfomr of the measure $\frac12\delta_{-1} + \frac14\delta_0 + \frac12\delta_1$. The rate function in this case is \[ \Lambda^*(-1/2;x) = (1+x)\log(1+x) + (1-x)\log(1-x).\]

As mentioned in the introduction, a \gls{ldp} for an equivalent model has already been found in Franchini \cite{franchini2015large} and Franchini and Balzan \cite{franchini2023large}. To see the equivalence of this expression to the previously known, recall that the DEK model with sample size 1 and trust parameter $p$ as defined in \cite{franchini2023large} is equal to $(S_n +n)/2n$, where $S$ is an \gls{erw}.

By the definition of $\psi(x)$ in \cite{franchini2023large}:

\begin{align*}
    \psi(x) \coloneqq \lim_{n\to\infty} \frac1n \log \sum_{j=0}^n e^{x j} \mathbb P\left( \frac12 \left( \frac{S_n + n}{n} \right) = j/n \right) 
    &=  \lim_{n\to\infty} \frac1n \log e^{\frac{nx}{2}}\sum_{j=0}^n e^{\frac x2 (2j -n)} \mathbb P\left( S_n = 2j-n \right)\\
    &= \lim_{n\to\infty}\frac1n\log e^{\frac{nx}{2}} \mathbb E\left[ e^{\frac x2 S_n} \right]
    = \frac x2 + \Lambda(-x/2).
    \intertext{Thus, by symmetry of $\Lambda$, for $x >0$,}
    e^{-\frac x2} \rho_a\left( \frac x2 \right) =\exp\left(-\Lambda\left(\frac x2\right)- \frac x2\right) &= \exp\left(-\psi(x)\right). 
\end{align*}
This can also be verified by a direct computation. Take $a,x > 0$.
\begin{align*}
    e^{-\frac x2} \rho_a\left( \frac x2 \right) &= \frac{e^{-\frac x2} \sinh^{\frac1a}(x/2)}{a} \int_{\frac x2}^\infty \sinh^{-1-\frac1a}(t) \d t = \frac{e^{-\frac x 2} ( e^{\frac x2} - e^{-\frac x2} )^\frac1a}{a}\int_x^\infty (e^{\frac t2} - e^{-\frac t2})^{-1-\frac1a} \d t\\
    &=\frac{e^{\frac {1-a}{2a} x}\left(1- e^{-x}\right)^\frac1a}{a}\int_x^\infty e^{-t - \frac{1-a}{2a} t}(1-e^{-t})^{-1-\frac1a}\d s.
\intertext{Take the change of variable $t = 1-e^{-s}$,}
     e^{-\frac x2} \rho_a\left( \frac x2 \right) &=     \frac{e^{\frac{1-a}{2a} x}(1-e^{-x})^{\frac1a}}{a}\int_{1-e^{-x}}^1 (1-t)^{\frac{1-a}{2a} }t^{-\frac1a-1} \d t
\end{align*}
Using integration by parts in the last integral leads to
\begin{align*}
    \int_{1-e^{-x}}^1 (1-t)^{\frac{1-a}{2a} }t^{-\frac1a-1} \d t &= ae^{-x\frac{1-a}{2a}}(1-e^{-x})^{-\frac1a} - \frac{1-a}{2a} \int_{1-e^{-x}}^1 t^{-\frac1a}(1-t)^{\frac {1-a}{2a} -1} \d t.
    \intertext{Substituting back into $e^{-\frac x2} \rho_a\left( \frac x2 \right)$}
    e^{-\frac x2} \rho_a\left( \frac x2 \right) &= 1 - \frac{1-a}{2a} e^{\frac{1-a}{2a} x}(1-e^{-x})^{\frac1a}\int_{1-e^{-x}}^1 (1-t)^{\frac{1-a}{2a} }t^{-\frac1a} \d t = \exp(-\psi(x)),
    \intertext{which is exactly $\psi(x)$ for $a,x>0$ as in eq. (93) in \cite{franchini2023large}.}
\end{align*}

Since precise asymptotics of the Laplace transform yield the \gls{ldp}, the present approach naturally suggests extensions leading to sharper large deviation asymptotics. These questions will be addressed in future work.

\section*{Acknowledgements} The author gratefully  acknowledges his supervisors, Kilian Raschel and Pierre Tarrago, for the stimulating discussions, the time they devoted to reading the drafts, and their invaluable comments. The inspiration of this work stemmed from the valuable insights of Bernard Bercu, Michel Bonnefont, Hélène Guérin, Lucile Laulin, Kilian Raschel, and Adrien Richou in their discussions around the Elephant Random Walk and and precise asymptotics results for similar models. This project has received funding from the European Union’s Horizon Europe research and innovation programme under the Marie Skłodowska-Curie grant agreement n° 101126554.

\begin{center}
    \includegraphics[width=0.15\textwidth]{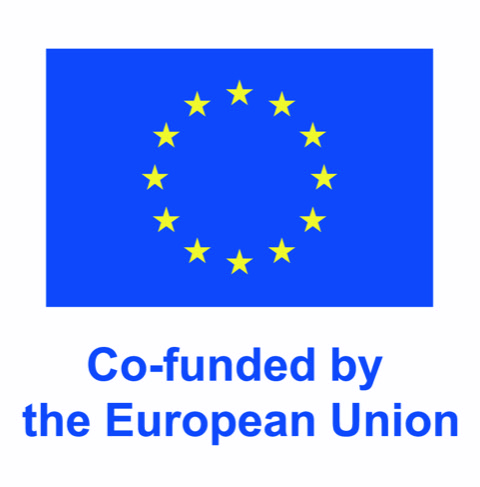}
\end{center}

\section*{Disclaimer}

Co-funded by the European Union. Views and opinions expressed are however those of the author only and do not necessarily reflect those of the European Union. Neither the European Union nor the granting authority can be held responsible for them.

\section{Appendix: Webpage with interactive implementation of the polygon} \label{interactive_polygon}

An interactive implementation of the polygons in \Cref{subsection_a-1-12} can be found in this link \url{https://soy-esaul.github.io/interactive_polygon/}

\printbibliography

\end{document}